\documentclass[a4paper,11pt]{article}
\usepackage[T2A]{fontenc}
\usepackage[utf8]{inputenc}
\usepackage[russian,english]{babel}

\usepackage{latexsym}
\usepackage{geometry}
\usepackage{graphicx}
\usepackage{amsfonts}
\usepackage{amsthm}
\usepackage{amsmath}
\usepackage{amssymb}
\usepackage{xcolor}
\usepackage{mathrsfs}
\usepackage{bookmark}
\usepackage{hyperref}

\usepackage{accents}
\usepackage{indentfirst}
\usepackage{enumerate}
\usepackage{float}
\usepackage{mathtools}
\usepackage{csquotes}
\usepackage{setspace}
\usepackage{cite}
\usepackage{array}

\newtheorem{theorem}{Theorem}[section]
\newtheorem{corollary}[theorem]{Corollary}
\newtheorem{proposition}[theorem]{Proposition}
\newtheorem{lemma}[theorem]{Lemma}
\theoremstyle{definition}
\newtheorem{definition}[theorem]{Definition}
\newtheorem{descript}[theorem]{Description}

\newtheorem{notation}[theorem]{Notation}
\newtheorem{example}[theorem]{Example}
\newtheorem{remark}[theorem]{Remark}

\numberwithin{equation}{section}
\allowdisplaybreaks

\let\phi\varphi

\def\A{\mathcal{A}}

\def\Hyp{\mathcal{H}}
\def\Main{\mathcal{M}}

\renewcommand{\Im}{\mathop{\mathfrak{Im}}\nolimits}

\DeclareMathOperator{\chrs}{char}
\DeclareMathOperator{\codim}{codim}

\DeclareMathOperator{\spn}{span}
\DeclareMathOperator{\Anc}{Anc}
\DeclareMathOperator{\Ann}{Ann}

\DeclareMathOperator{\Ker}{Ker}

\newcommand\til[1]{\widetilde{\phantom{,\mkern-4mu} #1 \phantom{,\mkern-4mu}}\mkern-1mu}

\providecommand{\keywords}[1]{\textbf{Keywords:} #1}
\providecommand{\msc}[1]{\textbf{MSC 2020:} #1}

\newcommand\freefootnote[1]{%
    \bgroup
    \renewcommand\thefootnote{\fnsymbol{footnote}}%
    \renewcommand\thempfootnote{\fnsymbol{mpfootnote}}%
    \footnotetext[0]{#1}%
    \egroup
}

\begin{document}

\title{On doubly alternative zero divisors\\ in Cayley--Dickson algebras}
\author{
Svetlana Zhilina$^{a,b}$
}
\date{\small \em
$^a$Department of Mathematics and Mechanics,\\ Lomonosov Moscow State University,\\ Moscow, 119991, Russia\\
$^b$Moscow Institute of Physics and Technology,\\ Dolgoprudny, 141701, Russia
}

\maketitle

\begin{abstract}
Zero divisors of Cayley--Dickson algebras over an arbitrary field $\mathbb{F}$, $\chrs \mathbb{F} \neq 2$, are studied. It is shown that zero divisors, whose components alternate strongly pairwise and have nonzero norm, form hexagonal structures in the zero divisor graph of a Cayley--Dickson algebra. The properties of doubly alternative zero divisors, at least one of whose components has nonzero norm, are established, and an explicit form of their annihilators, orthogonalizers, and centralizers is obtained. The properties of zero divisors in Cayley--Dickson algebras with anisotropic norm are described, and it is shown that in this case directed hexagons in the zero divisor graph can be extended to undirected double hexagons in the orthogonality graph. A criterion of $C$-equivalence for elements of Cayley--Dickson algebras with anisotropic norm is obtained. Possible values of dimension for annihilators of elements of Cayley--Dickson algebras are considered.
\end{abstract}

\keywords{Cayley--Dickson algebras, relation graphs, zero divisors, alternative elements.}

\msc{05C25, 17A20, 17D05}

\freefootnote{This work was supported by the Theoretical Physics and Mathematics Advancement Foundation ``BASIS'' (project No. 21-8-3-8-1).}

\freefootnote{Email address: \texttt{s.a.zhilina@gmail.com}}

\section{Introduction}
A convenient method of visualization of a binary algebraic relation $R$ is to define the corresponding graph. Its vertices represent elements or their equivalence classes in an algebraic structure under consideration, and there is an edge from $x$ to $y$ if and only if $xRy$. The most popular relation graphs of various algebras are commutativity, orthogonality, and zero divisor graphs.

Studying relation graphs is a rapidly expanding branch of modern mathematics. Among the directions where relation graphs find particularly important applications, we mention the problem of classification of relation preserving mappings, cf.~\cite{dolinar_kuzma}, and the isomorphism problem, that is, exploring the connection between the isomorphism of algebraic structures and the isomorphism of corresponding relation graphs, cf.~\cite{kuzma,dolinar_kuzma1}.

This work aims to study commutativity and orthogonality relations, the relation of forming a pair of zero divisors, and the graphs induced by them, for a particular class of non-associative algebras, namely, Cayley--Dickson algebras. The study of Cayley--Dickson algebras began in the theory of composition algebras, i.e., those algebras which possess a strictly nondegenerate quadratic form~$n(\cdot)$ satisfying the identity $n(ab) = n(a) n(b)$ for all elements of the algebra.

In 1898 Hurwitz showed that the only unital composition division algebras over $\mathbb{R}$ are the real numbers~$\mathbb{R}$, the complex numbers~$\mathbb{C}$, the quaternions~$\mathbb{H}$, and the octonions~$\mathbb{O}$. Later Hurwitz theorem was extended by Jacobson to arbitrary unital composition algebras over an arbitrary field~$\mathbb{F}$, $\chrs \mathbb{F} \ne 2$. He showed that any such algebra $\A$ is isomorphic to a Cayley--Dickson algebra $\A_n$ of dimension $2^n$, where $0 \le n \le 3$, see~\cite[p.~61, Theorem~1]{Jacobson}. This result was generalized by Zhevlakov et al. to a field $\mathbb{F}$ of arbitrary characteristic, see~\cite[p.~32, Theorem~1]{Zhevlakov}.

In general, Cayley--Dickson algebras over a field $\mathbb{F}$, $\chrs \mathbb{F} \neq 2$, are a family of $2^n$-dimensional algebras $\A_n$, $n \in \mathbb{N}_0$, which are defined inductively: $\A_0 = \mathbb{F}$, and at each step the algebra $\A_{n+1}$ is obtained from $\A_n$ by applying the Cayley--Dickson process with some parameter $\gamma_n \in \mathbb{F} \setminus \{ 0 \}$. The elements of $\A_{n+1}$ are ordered pairs of elements from~$\A_n$, that is, the elements of the form $(a,b) \in \A_n \times \A_n$. For $n \geq 4$ the algebra $\A_n$ is not alternative, and thus it is not a composition algebra. Consequently, there appear zero divisors even in the case when the norm on $\A_n$ is anisotropic. The problem of their classification and description of their annihilators is not a trivial one, except for some particular cases.

At present most of the authors restrict their attention to real algebras of the main sequence which we denote by~$\Main_n$. In this case we have $\mathbb{F} = \mathbb{R}$, and all Cayley--Dickson parameters are equal to $-1$. The most successful efforts in studying zero divisors in these algebras have been taken by Moreno~\cite{moreno, moreno_alternative, moreno_constructing} and Biss, Dugger, and Isaksen~\cite{biss, biss2}. Particularly, in~\cite{biss, biss2} the dimensions of their annihilators were completely described, and the zero divisors whose annihilators have the largest possible dimension were classified. Then Pixton~\cite{pixton} obtained a similar result on the dimension of alternators in these algebras.

It should be noted that Moreno was the first to study doubly alternative elements in real algebras of the main sequence, that is, the elements whose both components are alternative in the previous algebra of the sequence. He established several important properties of doubly alternative zero divisors, see~\cite[pp. 25--27]{moreno}. One of the reasons why doubly alternative elements can be successfully studied is that, as it was shown in~\cite[p.~15]{moreno_alternative}, though the composition identity $n(ab) = n(ba) = n(a)n(b)$ does not hold in the algebra $\Main_n$ for $n \geq 4$, it is still true if $a, b \in \Main_n$ alternate with each other.

Some of the recent works on relation graphs of real Cayley--Dickson algebras include~\cite{our_split-algebras, our_sedenions, our_split-sedenions}, where relation graphs of low-dimensional real Cayley--Dickson algebras, namely, the split-complex numbers, the split-quaternions, the split-octonions, the split-sedenions, and the sedenions, have been described. In the author's papers~\cite{our_orthographs1,our_orthographs2} zero divisors in real Cayley--Dickson algebras whose components satisfy additional conditions on their norm and alternativity have been studied, and the isomorphism problem for orthogonality graphs on pairs of basis elements of real Cayley--Dickson algebras has been solved.

In the current paper we generalize the results which were obtained in~\cite{our_orthographs1} for real Cayley--Dickson algebras to the case of arbitrary Cayley--Dickson algebras over a field $\mathbb{F}$, $\chrs \mathbb{F} \neq 2$, and the results which were obtained in~\cite{moreno,biss,our_orthographs1} for real algebras of the main sequence --- to the case of arbitrary Cayley--Dickson algebras with anisotropic norm. In Corollary~\ref{corollary:alternative-subalgebras} and Lemma~\ref{lemma:A_n-alternative-properties} we correct some inaccuracies which occur in the proofs of Lemma~4.6 and Corollary~5.9 of the paper~\cite{our_orthographs1}. We also study possible values of dimension for annihilators of elements in arbitrary Cayley--Dickson algebras.

The structure of this paper is as follows: In Section~\ref{section:definitions} we introduce main definitions and notations which are used throughout the text. In particular, we describe the Cayley--Dickson process in detail in Subsection~\ref{subsection:A_n} and mention some of the properties of Cayley--Dickson algebras in Subsection~\ref{subsection:A_n-properties}.

In Section~\ref{section:alternative-subalgebras} some well-known results on subalgebras in real algebras of the main sequence are generalized to the case of arbitrary Cayley--Dickson algebras. Namely, in Lemma~\ref{lemma:quaternionic-subalgebra}, Corollary~\ref{corollary:quaternionic-subalgebra} and Theorems~\ref{theorem:nonspecial-quaternionic-subalgebra} and~\ref{theorem:octonionic-subalgebra} we determine a sufficient condition for two or three elements to generate an associative or an alternative subalgebra and present an explicit multiplication table for the elements of this subalgebra. We achieve this by constructing a homomorphism from $\A_2$ or $\A_3$ to the subalgebra discussed.

In Section~\ref{section:doubly-alternative} we consider pairs of zero divisors in arbitrary Cayley--Dickson algebras, whose components have nonzero norm and alternate with each other. It is shown in Subsection~\ref{subsection:A_n-hexagons} that they form hexagonal patterns in the zero divisor graph. Lemma~\ref{lemma:A_n-next-pair} plays a key role in studying such elements, since it allows to construct a new pair of zero divisors from a given pair of zero divisors. The main result of this subsection is Theorem~\ref{theorem:A_n-double-hexagon}. In Subsection~\ref{subsection:doubly-alternative-elements} we describe the properties of doubly alternative zero divisors, at least one of whose components has nonzero norm. Lemma~\ref{lemma:double-alternative-annihilators} and Theorem~\ref{theorem:split-algebras-commutativity-through-orthogonality} establish an explicit form of their annihilators and orthogonalizers and describe a relation between their centralizers and orthogonalizers.

In Section~\ref{section:main-sequence} we consider zero divisors in Cayley--Dickson algebras with anisotropic norm. Lemmas~\ref{lemma:moreno-zero-divisor-conditions} and~\ref{lemma:A_n-alternative-properties} generalize the results on the properties of zero divisors in real algebras of the main sequence from the paper~\cite{moreno}. Corollary~\ref{corollary:C-equivalent} shows that two noncentral elements of a Cayley--Dickson algebra with anisotropic norm are $C$-equivalent, i.e., their centralizers coincide, if and only if their imaginary parts are proportional to each other. We prove in Theorem~\ref{theorem:double-hexagon} that, in case of Cayley--Dickson algebras with anisotropic norm, directed hexagons in the zero divisor graph from Theorem~\ref{theorem:A_n-double-hexagon} can be extended to undirected double hexagons in the orthogonality graph.

Section~\ref{section:annihilator-dimensions} is devoted to studying possible values of dimension for annihilators of elements in Cayley--Dickson algebras. Examples~\ref{example:non-split-algebra-zero-divisors},~\ref{example:annihilator-dimension-2} and~\ref{example:annihilator-dimension-3} demonstrate that, in general, the dimension of an annihilator may be odd, or it may be even but not divisible by four. However, by Theorem~\ref{theorem:anisotropic-annihilator-dimension}, in case of Cayley--Dickson algebras with anisotropic norm, the dimension of an annihilator is always divisible by four. This result is a generalization of Theorem~9.8 from the paper~\cite{biss} on the dimension of annihilators in real algebras of the main sequence.

\section{Main definitions and notations} \label{section:definitions}

\subsection{Algebraic relations and their graphs} \label{subsection:definitions}

Let $\mathbb{F}$ be an arbitrary field and $(\A, +, \cdot)$ be an algebra over a field $\mathbb{F}$, possibly noncommutative and non-associative. We say that $a, b \in \A$ {\em anticommute} if $ab + ba = 0$, and $a, b \in \A$ are {\em orthogonal} if $ab = ba = 0$. We denote the set of zero divisors (left, right, or two-sided) in $\A$ by $Z(\A)$, the set of two-sided zero divisors in $\A$ by $Z_{LR}(\A)$, and the (commutative) center of $\A$ by $C_{\A}$.

\begin{definition} \label{definition:subspaces}
Let $a$ be an arbitrary element of $\A$.
\begin{itemize}
    \item
    {\em The centralizer} of $a$ is $C_\A(a) = \big\{ b \in \A \: | \: ab=ba \big\}$, i.e., the set of all elements in $\A$ which commute with $a$.
    \item
    {\em The anticentralizer} of $a$ is $\Anc_\A(a) = \big\{ b \in \A \: | \:  ab+ba=0 \big\}$, i.e., the set of all elements in $\A$ which anticommute with $a$.
    \item
    {\em The orthogonalizer} of $a$ is $O_\A(a)=\big\{ b \in \A \: | \;  ab=ba=0 \big\}$, i.e., the set of all elements in $\A$ which are orthogonal to $a$.
    \item
    {\em The left annihilator} of $a$ is the set $l.\Ann_\A(a) = \big\{ b \in \A \: | \; ba=0 \big\}$.
    \item
    Similarly, {\em the right annihilator} of $a$ is $r.\Ann_\A(a) = \big\{ b \in \A \: | \; ab=0 \big\}$.
\end{itemize}
\end{definition}

It is clear that $C_\A(a)$, $\Anc_\A(a)$, $O_\A(a)$, $l.\Ann_\A(a)$, and $r.\Ann_\A(a)$ are linear spaces over~$\mathbb{F}$.

\begin{definition}
The elements $a, b \in \A$ are called {\em $C$-equivalent} if $C_{\A}(a) = C_{\A}(b)$, and $a, b \in \A$ are called {\em $O$-equivalent} if $O_{\A}(a) = O_{\A}(b)$.
\end{definition}

\begin{notation}
For any subset $X$ of a linear space $W$ over $\mathbb{F}$ we denote the set of lines passing through elements of~$X$ by
$$
\mathbb{P}(X) = \{ [x] = \mathbb{F} x \; | \; x \in X \setminus \{ 0 \} \}.
$$
\end{notation}

We now introduce some relation graphs which are to be studied in this paper.

\begin{definition} \label{definition:graphs}
Let $\A$ be an arbitrary algebra. We define the following relation graphs of $\A$:
\begin{itemize}
\item 
{\em The commutativity graph} $\Gamma_C(\A)$: its vertices are elements of $\mathbb{P}(\A/C_\A) = \{ [a + C_{\A}] = \mathbb{F}a + C_{\A} \; | \; a \in \A \setminus C_\A \}$, and distinct vertices $[a + C_{\A}]$ and $[b + C_{\A}]$ are adjacent if and only if $ab=ba$.
\item 
{\em The orthogonality graph} $\Gamma_O(\A)$: its vertices are elements of $\mathbb{P}(Z_{LR}(\A))$, and distinct vertices $[a]$ and $[b]$ are adjacent if and only if $ab=ba=0$.
\item
{\em The directed zero divisor graph} $\Gamma_Z(\A)$: its vertices are elements of $\mathbb{P}(Z(\A))$, and distinct vertices $[a]$ and $[b]$ form a directed edge $([a], [b])$ if and only if $ab=0$.
\end{itemize}
\end{definition}

Note that the edges of $\Gamma_C(\A)$, $\Gamma_O(\A)$, and $\Gamma_Z(\A)$ are well-defined. When speaking of the vertices of these graphs, we will not distinguish between a nonzero element $a$ and a line $[a] = \mathbb{F} a$ passing through it. We also denote $\spn(a_1, \dots, a_k) = \mathbb{F} a_1 + \dots + \mathbb{F} a_k$.

\subsection{Constructing Cayley--Dickson algebras} \label{subsection:A_n}

We refer the reader to~\cite{mccrimmon,schafer} for auxiliary definitions and general properties of Cayley--Dickson algebras.

\begin{definition} \label{definition:cayley-dickson-algebras}
Let $\A$ be an algebra over a field $\mathbb{F}$ with an involution $a \mapsto \bar{a}$. The algebra $\A \{ \gamma \}$ produced by the Cayley--Dickson process, when applied to $\A$ with the parameter $\gamma \in \mathbb{F}$, $\gamma \neq 0$, is defined as the set of ordered pairs of elements of $\A$ with operations
\begin{align*}
\alpha(a,b)&=(\alpha a, \alpha b);\\
(a,b)+(c,d)&=(a+c,b+d);\\
(a,b)(c,d)&=(ac+\gamma \bar{d}b,da+b\bar{c})
\end{align*} 
and the involution
$$
\qquad (\overline{a,b})=(\bar{a},-b), \qquad a,b,c,d\in \A, \ \alpha \in \mathbb{F}.
$$
If the involution on $\A$ is regular, that is, $a + \bar{a} \in \mathbb{F}1_{\A}$ and $a\bar{a} = \bar{a}a \in \mathbb{F}1_{\A}$ for all $a \in \A$, then the involution on $\A \{ \gamma \}$ is also regular, cf.~\cite[p.~435]{schafer}.
\end{definition}

\begin{proposition} {\rm \cite[p. 161, Exercise 2.5.1]{mccrimmon} } \label{proposition:invertible-square}
Let $\gamma'=\alpha^2\gamma$ for some $\alpha \neq 0$. Then the algebras $\A \{ \gamma \}$ and $\A \{ \gamma' \}$ are isomorphic.
\end{proposition}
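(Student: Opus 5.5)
The plan is to write down an explicit isomorphism that rescales the second component. I will define $\phi \colon \A\{\gamma'\} \to \A\{\gamma\}$ by $\phi(a,b) = (a, \alpha b)$. This map is clearly $\mathbb{F}$-linear. It is bijective because $\alpha \neq 0$, with inverse $(a,b) \mapsto (a, \alpha^{-1} b)$. What remains is to check that $\phi$ respects multiplication, and also the involution so that the isomorphism is one of algebras with involution.

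For multiplicativity, I first compute the product in $\A\{\gamma'\}$ and then apply $\phi$:
$$
\phi\big((a,b)(c,d)\big) = \phi(ac + \gamma' \bar{d} b,\; da + b\bar{c}) = (ac + \alpha^2\gamma \bar{d} b,\; \alpha da + \alpha b\bar{c}).
$$
Next I multiply the images in $\A\{\gamma\}$:
$$
\phi(a,b)\,\phi(c,d) = (a,\alpha b)(c, \alpha d) = (ac + \gamma\, \overline{\alpha d}\,(\alpha b),\; \alpha d a + \alpha b \bar{c}).
$$
The two results agree as soon as $\overline{\alpha d} = \alpha \bar d$. This holds because the involution on $\A$ is $\mathbb{F}$-linear, which is the only property of $\A$ the argument needs.

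For the involution, I compute $\phi(\overline{(a,b)}) = \phi(\bar a, -b) = (\bar a, -\alpha b)$. This equals $\overline{(a,\alpha b)} = \overline{\phi(a,b)}$, so $\phi$ also commutes with the involution. The isomorphism therefore persists through later Cayley--Dickson steps.

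There is no real obstacle here. The only point that needs care is the bookkeeping of where the scalar $\alpha$ lands in the term $\gamma \bar d b$. This is also where we use that $\alpha^2$, not $\alpha$, appears in $\gamma'$.
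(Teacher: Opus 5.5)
Your proof is correct. The rescaling $(a,b)\mapsto(a,\alpha b)$ amounts to replacing the adjoined generator $(0,1)$ by $(0,\alpha)$, whose square in $\A\{\gamma\}$ is $\alpha^2\gamma=\gamma'$, and your verification of the product and the involution is sound. The paper gives no proof of its own and only cites McCrimmon's exercise, whose intended solution is this same argument.
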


Henceforth we assume that $\chrs \mathbb{F} \neq 2$. We now define an arbitrary Cayley--Dickson algebra which is determined by the set of its parameters.

\begin{definition} \label{definition:A_n}
For every integer $n \geq 0$ and nonzero numbers $\gamma_0, \dots, \gamma_{n-1} \in \mathbb{F}$ we define the Cayley--Dickson algebra $\A_n = \A_n \{ \gamma_0, \dots, \gamma_{n-1} \}$ inductively:
\begin{enumerate} [(1)]
    \item $\A_0 = \mathbb{F}$, and $e^{(0)}_0 = 1$ is its only basis element;
    \item If $\A_n \{ \gamma_0, \dots, \gamma_{n-1} \}$ is constructed then $\A_{n+1} \{ \gamma_0, \dots, \gamma_n \}=(\A_n \{ \gamma_0, \dots, \gamma_{n-1} \}) \{ \gamma_n \}$. Its basis elements are $e^{(n+1)}_0, \dots, e^{(n+1)}_{2^{n+1}-1}$ such that
	\begin{equation*}
	e^{(n+1)}_m =
	\begin{cases}
	(e^{(n)}_m,0), & 0 \leq m \leq 2^n-1,\\
	(0,e^{(n)}_{m-2^n}), & 2^n \leq m \leq 2^{n+1}-1.
	\end{cases}
	\end{equation*}
\end{enumerate}
\end{definition}

For every integer $n \geq 0$ the structure $\A_n$ in Definition~\ref{definition:A_n} is a $2^n$-dimensional algebra over $\mathbb{F}$ with the unit element $e^{(n)}_0$ and a regular involution. We denote $1 = e_0 = e^{(n)}_0$ and $k = k e^{(n)}_0$ for $k \in \mathbb{F}$. 

\begin{definition} \label{definition:real-imaginary-part}
\leavevmode
\begin{itemize}
	\item Let $a \in \A_n$. Its {\em trace} is $t(a) = a + \bar{a}$, its {\em imaginary part} is $\Im(a) = \frac{a - \bar{a}}{2}$, and its {\em norm} is $n(a) = a \bar{a} = \bar{a}a$. Since the involution on $\A_n$ is regular, we have $t(a), n(a) \in \mathbb{F}$.
	\item An element $a \in \A_n$ is said to be {\em pure} if $t(a) = 0$.
	\item An element $(a, b) \in \A_{n+1}$ is said to be {\em doubly pure} if $t(a) = t(b) = 0$.
\end{itemize}
\end{definition}

\begin{proposition} {\rm \cite[p. 435]{schafer}} \label{proposition:real-cayley-dickson-properties}
We can compute trace and norm of an element $(a,b) \in \A_{n+1}$ inductively by using the following equalities:
\begin{align*}
	t((a,b)) &= t(a),\\
	n((a,b)) &= n(a) - \gamma_n n(b).
\end{align*}
\end{proposition}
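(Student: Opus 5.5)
The statement to prove is Proposition~\ref{proposition:real-cayley-dickson-properties}: for $(a,b) \in \A_{n+1}$ we have $t((a,b)) = t(a)$ and $n((a,b)) = n(a) - \gamma_n n(b)$. I will compute directly from Definition~\ref{definition:cayley-dickson-algebras}. The facts I need about $\A_n$ are that its involution is regular and that $\mathbb{F}1$ is central in $\A_n$. Regularity means $t(x), n(x) \in \mathbb{F}$ for all $x$, and $x\bar{x} = \bar{x}x$.

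\textbf{Trace.} By definition $\overline{(a,b)} = (\bar{a}, -b)$. Hence
$$t((a,b)) = (a,b) + (\bar a,-b) = (a+\bar a, 0) = (t(a),0),$$
and $(t(a),0)$ is the scalar $t(a)$ in $\A_{n+1}$, since the unit of $\A_{n+1}$ is $(1,0)$.

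\textbf{Norm.} Apply the multiplication rule $(a,b)(c,d) = (ac + \gamma\bar{d}b,\ da + b\bar{c})$ with $c = \bar a$ and $d = -b$:
$$(a,b)(\bar a,-b) = \bigl(a\bar a - \gamma_n \bar b b,\ -ba + b\bar{\bar a}\bigr) = \bigl(n(a) - \gamma_n n(b),\ -ba + ba\bigr).$$
This uses $\bar{\bar a} = a$ (the map is an involution) and $\bar b b = n(b)$ (regularity). The second component vanishes, so $(a,b)\overline{(a,b)} = n(a) - \gamma_n n(b)$.

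For the reverse order, take $(\bar a,-b)(a,b)$ with $c = a$ and $d = b$:
$$\bigl(\bar a a + \gamma_n \bar b(-b),\ b\bar a + (-b)\bar a\bigr) = \bigl(n(a) - \gamma_n n(b),\ 0\bigr).$$
So both products equal the same scalar, which gives the formula and also re-confirms that the involution on $\A_{n+1}$ is regular.

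The only step needing care is making sure the cross terms $-ba + ba$ and $b\bar a - b\bar a$ cancel exactly under the paper's ordering convention for the product. This holds by construction, because the formula places $d$ on the left of $a$ and $\bar c$ on the right of $b$. The rest is bookkeeping, and an induction on $n$ supplies regularity of $\A_n$ at each step.
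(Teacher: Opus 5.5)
Your computation is correct: under the paper's product convention, both $(a,b)(\bar a,-b)$ and $(\bar a,-b)(a,b)$ come out to $\bigl(n(a)-\gamma_n n(b),0\bigr)$, the trace identity is immediate, and induction on $n$ supplies the regularity of the involution on $\A_n$ that you need. The paper gives no proof of its own and only cites Schafer, where these identities are obtained by the same direct calculation from the multiplication and involution formulas.
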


It follows from Proposition~\ref{proposition:real-cayley-dickson-properties} that the norm $n(\cdot)$ is a nondegenerate quadratic form on~$\A_n$.

\subsection{Some properties of Cayley--Dickson algebras} \label{subsection:A_n-properties}

Henceforth we assume that $\A$ is an arbitrary algebra over a field $\mathbb{F}$, and $\A_n = \A_n \{ \gamma_0, \dots, \gamma_{n-1} \}$ is an arbitrary Cayley--Dickson algebra over a field $\mathbb{F}$, $\chrs \mathbb{F} \neq 2$.

\begin{proposition} {\rm \cite[p.~440]{schafer}} \label{proposition:lambda-form}
Let $\langle a, b \rangle$ denote an $\mathbb{F}$-valued symmetric bilinear form associated with the quadratic form $n(a)$. Then $\langle a, a \rangle = n(a)$ and $2\langle a,b \rangle = a \bar{b} + b \bar{a} = \bar{a} b + \bar{b} a = t(a\bar{b})$ for all $a, b \in \A_n$. Besides, for any $a, b \in \A_n$ it holds that $\langle a, b \rangle = \langle \bar{a}, \bar{b} \rangle$ and $t(a) = 2 \langle a, e_0 \rangle$.
\end{proposition}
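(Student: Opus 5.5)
Since the form $\langle\cdot,\cdot\rangle$ is defined as the symmetric bilinear form associated with $n(\cdot)$, i.e.\ by polarization $\langle a,b\rangle = \tfrac12\big(n(a+b)-n(a)-n(b)\big)$ (legitimate because $\chrs\mathbb{F}\neq 2$), the identity $\langle a,a\rangle = n(a)$ is immediate from $n(2a)=4n(a)$. I would derive everything else from the regularity of the involution, $n(x)=x\bar x=\bar x x\in\mathbb{F}$, together with the facts that the involution is $\mathbb{F}$-linear and an involution. No associativity of $\A_n$ is needed beyond distributivity.

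For the trace formulas, I expand $n(a+b) = (a+b)(\bar a+\bar b) = a\bar a + a\bar b + b\bar a + b\bar b$. This gives $2\langle a,b\rangle = a\bar b + b\bar a$. Expanding instead $n(a+b)=(\bar a+\bar b)(a+b)$ gives $2\langle a,b\rangle = \bar a b + \bar b a$. Next, $b\bar a = \overline{a\bar b}$, because the involution is an anti-automorphism, $\overline{xy}=\bar y\,\bar x$. This property holds inductively from Definition~\ref{definition:cayley-dickson-algebras}, and I would verify it by a direct computation on pairs, since $\A_0$ is trivial. Hence $a\bar b + b\bar a = t(a\bar b)$. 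Taking $b = e_0$ yields $2\langle a,e_0\rangle = a + \bar a = t(a)$, since $\bar e_0 = e_0$.

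Finally, $\langle \bar a,\bar b\rangle = \langle a,b\rangle$ follows from $n(\bar x) = \bar x x = x\bar x = n(x)$, applied to $x=a$, $x=b$ and $x=a+b$ in the polarization formula. Alternatively, one can use $\bar a b + \bar b a = 2\langle a,b\rangle$ directly, since that is exactly the first expression with $a,b$ replaced by $\bar a,\bar b$.

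The only step needing real verification is the anti-automorphism property of the involution on $\A_{n+1}$. I would compute both $\overline{(a,b)(c,d)}$ and $\overline{(c,d)}\,\overline{(a,b)} = (\bar c,-d)(\bar a,-b)$ from the multiplication rule, and compare them using the inductive hypothesis on $\A_n$. This is routine, so I expect no genuine obstacle.
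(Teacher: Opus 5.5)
Your proof is correct: polarizing $n(x)=x\bar{x}=\bar{x}x$ with $\overline{a+b}=\bar{a}+\bar{b}$, and then using $\overline{xy}=\bar{y}\,\bar{x}$, $\bar{\bar{x}}=x$ and $\bar{e}_0=e_0$, gives every identity, and your inductive check $\overline{(a,b)(c,d)}=(\bar{c},-d)(\bar{a},-b)$ does go through. The paper gives no proof of its own and simply cites Schafer; your argument is the standard derivation, and the anti-automorphism check is optional, since it is already part of what ``involution'' means in Definition~\ref{definition:cayley-dickson-algebras}.
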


\begin{notation}
We denote $a \perp b$ if $a$ and $b$ are orthogonal with respect to $\langle \cdot, \cdot \rangle$, that is, $\langle a, b \rangle = 0$.
\end{notation}

\begin{lemma} {\rm \cite[Lemmas~2 and~6]{schafer}} \label{lemma:A_n-zero-trace-associator} \label{lemma:inner-product-movement}
For all $x,y,z \in \A_n$ we have
\begin{enumerate}[{\rm (1)}]
    \item $t([x,y,z])=0$;
    \item $\langle x, yz \rangle = \langle x\bar{z}, y \rangle = \langle \bar{y}x, z \rangle$.
\end{enumerate}
\end{lemma}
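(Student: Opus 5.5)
We prove both parts by induction on $n$, working directly with the Cayley--Dickson multiplication of Definition~\ref{definition:cayley-dickson-algebras}. The base case $n=0$ is trivial, since $\A_0=\mathbb{F}$ is commutative and associative, so all associators vanish and the identities in (2) reduce to symmetry of the form and the equality $n(x)=x\bar x$. We shall use throughout Proposition~\ref{proposition:lambda-form}, which gives $2\langle a,b\rangle=t(a\bar b)$, together with the trace formula $t((a,b))=t(a)$ from Proposition~\ref{proposition:real-cayley-dickson-properties}.

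We first prove (2), because (1) follows from it. The cleanest route is to show the single identity $t(x(yz))=t((xy)z)$, that is, (1), in the form $\langle x,yz\rangle=\langle x\bar z,y\rangle$. Writing $\langle x,yz\rangle=\tfrac12 t(x\,\overline{yz})=\tfrac12 t(x(\bar z\bar y))$, and $\langle x\bar z,y\rangle=\tfrac12 t((x\bar z)\bar y)$, we see that the first equality of (2) is exactly $t(x(\bar z\bar y))=t((x\bar z)\bar y)$. This is statement (1) applied to the triple $x,\bar z,\bar y$. The second equality is obtained analogously, once we know that the trace is invariant under cyclic permutation of a product of two elements. That invariance says $t(uv)=t(vu)$, and it follows from $t(uv)=2\langle u,\bar v\rangle$ together with the symmetry of the form and $\langle u,\bar v\rangle=\langle \bar u, v\rangle$. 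So (2) reduces to (1), and the main task is to prove $t((xy)z)=t(x(yz))$ in $\A_{n+1}$, assuming (1) and (2) in $\A_n$.

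For the inductive step we write $x=(a,b)$, $y=(c,d)$, $z=(e,f)$. Since the trace only sees the first component, we compute the first component of $(xy)z$ and of $x(yz)$ using the product formula:
\[
\bigl((ac+\gamma\bar d b)e+\gamma\bar f(da+b\bar c)\bigr)
\quad\text{and}\quad
\bigl(a(ce+\gamma\bar f d)+\gamma\,\overline{(fc+d\bar e)}\,b\bigr).
\]
We then take the trace in $\A_n$ and match the terms pairwise. The term $t((ac)e)$ matches $t(a(ce))$ by the induction hypothesis. The remaining terms are each traces of a product of three elements of $\A_n$, some of them conjugated. Using the induction hypothesis, cyclic invariance $t(uv)=t(vu)$, and $t(\bar u)=t(u)$, we bring each of them to a canonical form such as $t(\bar d\,(b e))$; these should then cancel or match in pairs, for instance $t((\bar d b)e)$ against $t(\bar e(\bar c\, b))$-type terms coming from $\overline{d\bar e}\,b=(e\bar d)b$.

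The main obstacle is this bookkeeping of the $\gamma$-terms. The difficulty is that, without associativity, we must apply the inductive associator identity before every cyclic rotation. We would first try to organise the computation by expressing every term as $\langle\cdot,\cdot\rangle$ of two elements and moving factors across using the inductive form of (2). For example, $t(\bar f(da))=2\langle \bar f, \bar a\bar d\rangle$ can be rewritten by (2) in $\A_n$, and the terms then match as equal bilinear-form expressions. Alternatively, we may simply cite Schafer, where this is done.
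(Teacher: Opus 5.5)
The paper gives no proof of this lemma and only cites Schafer. Your plan is the standard argument: induction along the doubling formula, with (2) deduced from (1) using $2\langle u,v\rangle=t(u\bar v)$, $\overline{yz}=\bar z\bar y$ and $t(uv)=t(vu)$. Your reduction of (2) to (1) is correct as written. What is missing is the inductive step for (1) itself, which is the only real content. You say the $\gamma$-terms ``should then cancel or match in pairs'' and offer citing Schafer as a fallback, so the step is not actually proved. The sample pairing you give is also off: no term of the form $t(\bar e(\bar c\,b))$ occurs, and the partner of $t((\bar d b)e)$ is $t((e\bar d)b)$.

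The step closes quickly, and it needs only (1) and $t(uv)=t(vu)$ in $\A_n$, not (2). These two facts make $\tau(u,v,w):=t((uv)w)=t(u(vw))$ invariant under cyclic permutations, since $\tau(u,v,w)=t(w(uv))=t((wu)v)=\tau(w,u,v)$. Take traces of the first components you wrote down, with $\gamma=\gamma_n$ and $\overline{fc+d\bar e}=\bar c\bar f+e\bar d$. This gives
\[
t((xy)z)=\tau(a,c,e)+\gamma\bigl(\tau(\bar d,b,e)+\tau(\bar f,d,a)+\tau(\bar f,b,\bar c)\bigr),
\]
\[
t(x(yz))=\tau(a,c,e)+\gamma\bigl(\tau(e,\bar d,b)+\tau(a,\bar f,d)+\tau(\bar c,\bar f,b)\bigr).
\]
Each $\gamma$-term in the second line is a cyclic rotation of the corresponding one in the first. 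Hence $t([x,y,z])=0$ in $\A_{n+1}$, and together with your reduction this completes the induction for both parts.
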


\begin{definition} \label{definition:A_n-examples}
Let $\mathbb{F} = \mathbb{R}$.
\begin{itemize}
	\item It is said that the algebra $\A_n \{ \gamma_0, \dots, \gamma_{n-1} \}$ is {\em a real Cayley--Dickson algebra of the main sequence} if $\gamma_k = -1$ for each $k = 0, \dots, n-1$. We denote this algebra by~$\Main_n$.
	\item The algebra $\A_n \{ \gamma_0, \dots, \gamma_{n-1} \}$ is called {\em a real Cayley--Dickson split-algebra} if $\gamma_k = -1$ for each $k = 0, \dots, n-2$ and $\gamma_{n-1} = 1$.  We denote it by $\Hyp_n$, since the norm on~$\Hyp_n$ appears to be hyperbolic.
\end{itemize}
\end{definition}

\begin{proposition} {\rm \cite[Proposition~3.31]{our_split-algebras}} \label{proposition:A_n-euclidean-product}
\leavevmode
\begin{itemize}
	\item Let $a = \sum\limits_{m=0}^{2^n-1} a_{m}e^{(n)}_{m}, b = \sum\limits_{m=0}^{2^n-1} b_{m}e^{(n)}_{m} \in \Main_n$. Then $\langle a, b \rangle = \sum\limits_{m=0}^{2^n-1} a_m b_m$ is a Euclidean inner product. Particularly, $n(a) = \sum\limits_{m=0}^{2^n-1} a_{m}^2$, so $n(a)=0$ if and only if $a=0$.
	\item Let $a = \sum\limits_{m=0}^{2^n-1} a_{m}e^{(n)}_{m}, b = \sum\limits_{m=0}^{2^n-1} b_{m}e^{(n)}_{m} \in \Hyp_n$. Then $\langle a, b \rangle = \sum\limits_{m=0}^{2^{n-1}-1} a_m b_m - \sum\limits_{m=2^{n-1}}^{2^n-1} a_m b_m$.
\end{itemize}
\end{proposition}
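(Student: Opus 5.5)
We prove the two items separately, by induction on $n$ using the Cayley--Dickson process. By Proposition~\ref{proposition:lambda-form}, the bilinear form is recovered from the norm by polarization: $\langle a,b\rangle = \frac12\bigl(n(a+b)-n(a)-n(b)\bigr)$. It therefore suffices to show that the norm has the stated diagonal form in the basis $e^{(n)}_0,\dots,e^{(n)}_{2^n-1}$. The bilinear formula then follows at once, since a diagonal quadratic form $\sum \epsilon_m a_m^2$ polarizes to $\sum \epsilon_m a_m b_m$.

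First we record a general identity. For arbitrary parameters $\gamma_0,\dots,\gamma_{n-1}$ and $a=\sum a_m e^{(n)}_m$, we claim
$$
n(a)=\sum_{m=0}^{2^n-1} \epsilon_m a_m^2,
$$
where $\epsilon_m$ is the product of $-\gamma_k$ over those $k$ whose binary digit of $m$ equals $1$. We prove this by induction on $n$ using Proposition~\ref{proposition:real-cayley-dickson-properties}. For $n=0$ we have $n(a)=a\bar a=a_0^2$, since the involution on $\mathbb{F}$ is trivial. For the inductive step, write $a=(a',a'')$ with $a'=\sum_{m<2^n} a_m e^{(n)}_m$ and $a''=\sum_{m<2^n} a_{m+2^n}e^{(n)}_m$, which matches Definition~\ref{definition:A_n}. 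Then
$$
n(a)=n(a')-\gamma_n n(a''),
$$
and the new coefficients acquire exactly the factor $-\gamma_n$ on the upper half of the indices, as required.

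Now we specialize the parameters. For $\Main_n$ all $\gamma_k=-1$, so every $\epsilon_m=1$. Hence $n(a)=\sum a_m^2$ and $\langle a,b\rangle=\sum a_m b_m$. This is a positive definite real form, i.e.\ a Euclidean inner product, and $n(a)=0$ forces every $a_m=0$. For $\Hyp_n$ we have $\gamma_k=-1$ for $k\le n-2$ and $\gamma_{n-1}=1$. For $m<2^{n-1}$ only the digits $k\le n-2$ can occur, so $\epsilon_m=1$. For $m\ge 2^{n-1}$ the top digit contributes the factor $-\gamma_{n-1}=-1$ and the other digits contribute $1$, so $\epsilon_m=-1$. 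This gives the stated hyperbolic formula.

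The only point needing care is the bookkeeping in the inductive step: the coordinates of $a''$ must be indexed consistently with Definition~\ref{definition:A_n}, with index $m+2^n$ corresponding to $(0,e^{(n)}_m)$. This is routine, so the proof has no real obstacle.
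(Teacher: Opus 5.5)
Your proof is correct: induction on $n((a,b)) = n(a) - \gamma_n n(b)$ (Proposition~\ref{proposition:real-cayley-dickson-properties}) gives the diagonal form $n(a)=\sum_m \epsilon_m a_m^2$, where $\epsilon_m$ is the product of $-\gamma_k$ over the $k$ whose binary digit in $m$ is $1$, and polarization together with specializing the parameters yields both items. The paper gives no proof of its own here (it quotes the statement from an earlier work), so there is nothing to compare against; your route is the natural one, and your general formula also shows that for arbitrary parameters over any field of characteristic not $2$ the standard basis is orthogonal with $n(e^{(n)}_m)=\epsilon_m$.
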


\begin{remark}
In case of real algebras of the main sequence, the norm of $a$ is often defined as $\sqrt{a\bar{a}}$, unlike the definition $n(a)=a\bar{a}$ used in this paper. However, most of the results can be easily extended to the norm modified in this way.
\end{remark}

\begin{example}
\leavevmode
\begin{itemize}
	\item The complex numbers ($\mathbb{C}$), the quaternions ($\mathbb{H}$), the octonions ($\mathbb{O}$), and the sedenions ($\mathbb{S}$) are the real algebras of the main sequence for $n=1,\:2,\:3,$ and $4$, correspondingly, cf.~\cite{baez}.
	\item The split-complex numbers ($\hat{\mathbb{C}}$), the split-quaternions ($\hat{\mathbb{H}}$), the split-octonions ($\hat{\mathbb{O}}$), and the split-sedenions ($\hat{\mathbb{S}}$) are the real split-algebras for $n=1,\:2,\:3,$ and $4$, correspondingly, see~\cite{bentz, our_split-sedenions}.
\end{itemize}
\end{example}

We now proceed to some concepts related to associativity. For $a,b,c \in \A$ we denote their associator by $[a,b,c] = (ab)c - a(bc)$, and their anti-associator by $\{ a,b,c \} = (ab)c + a(bc)$. An algebra $\A$ is called {\em flexible} if for all $a,b \in \A$ the equality $[a,b,a] = 0$ holds. Clearly, in a flexible algebra $\A$, we have $[a,b,c]=-[c,b,a]$ for all $a,b,c \in \A$. An algebra~$\A$ is called {\em alternative} if for all $a,b \in \A$ the equalities $[a,a,b] = [b,a,a] = 0$ hold.

It is well-known that $\A_n$ is alternative if and only if $n \leq 3$, however, $\A_n$ is always flexible, see, e.g.,~\cite[p. 436, Theorem~1]{schafer}.

\begin{definition} {\rm \cite[p. 12, p. 15]{moreno_alternative}}
Let $a, b \in \A_n$.
\begin{itemize}
    \item We say that $a$ {\em alternates} with $b$ if $[a,a,b] = 0$.
    \item If $a$ alternates with every $b \in \A_n$, then $a$ is {\em alternative}.
    \item We say that $a$ {\em alternates strongly} with $b$ if $[a,a,b] = 0$ and $[b,b,a] = 0$.
    \item If $a$ alternates strongly with every $b \in \A_n$, then $a$ is {\em strongly alternative}.
\end{itemize}
\end{definition}

The following three lemmas describe the anticentralizer of an arbitrary nonzero element of $\A_n$ and the relationship between the centralizer and the orthogonalizer of an arbitrary pure element. In~\cite{our_anticomm} they are formulated for real Cayley--Dickson algebras only, however, their proofs are valid verbatim for the case of an arbitrary field. Nevertheless, we include the proofs of Lemmas~\ref{lemma:difference-centralizer-orthogonalizer} are~\ref{lemma:A_n-commutativity-through-orthogonality}, for the sake of completeness. In the formulation of Lemma~\ref{lemma:A_n-commutativity-through-orthogonality} the direct sum implies also that the direct summands are orthogonal to each other with respect to symmetric bilinear form $\langle \cdot, \cdot \rangle$. By~\cite[Proposition~8.19]{our_anticomm}, the condition $n \leq 3$ is essential in Lemma~\ref{lemma:A_n-commutativity-through-orthogonality}(1).

\begin{lemma} {\rm \cite[Lemma~5.8]{our_anticomm}} \label{lemma:A_n-anticomm}
Let $a \in \A_n$, $a \neq 0$.
\begin{enumerate}[\rm (1)]
\item If $t(a) \neq 0$, $n(a) \neq 0$, then $\Anc_{\A_n}(a) = \{ 0 \}$.
\item If $t(a) \neq 0$, $n(a)=0$, then $\Anc_{\A_n}(a) = \mathbb{F}\bar{a}$.
\item If $t(a) = 0$, then $\Anc_{\A_n}(a) = \left\{ b \in \A_n \; | \; t(b) = \langle a,b \rangle = 0 \right\} = \spn(e_0, a)^{\perp}$. \label{item:A_n-anticomm-pure}
\end{enumerate}
\end{lemma}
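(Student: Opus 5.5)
We prove the three cases of Lemma~\ref{lemma:A_n-anticomm} in the order (3), (1), (2), since the general case reduces to the pure one. Throughout we use only the regularity of the involution and Proposition~\ref{proposition:lambda-form}. The basic identity is that for any $x \in \A_n$ we have $x = \frac{t(x)}{2} + \Im(x)$ with $\overline{\Im(x)} = -\Im(x)$.

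\textbf{Pure elements.} For pure $a,b$ we have $\bar a=-a$ and $\bar b=-b$. By Proposition~\ref{proposition:lambda-form},
$$ab+ba = \bar a\bar b + \bar b\bar a = 2\langle \bar a, b\rangle = -2\langle a,b\rangle.$$
Hence for pure $b$ the condition $ab+ba=0$ is equivalent to $\langle a,b\rangle=0$. For a general $b = \beta + b'$ with $\beta = t(b)/2$ and $b'$ pure, we get
$$ab+ba = 2\beta a - 2\langle a,b'\rangle.$$
The first summand is pure and the second is a scalar. Since $a\neq 0$ is pure, it is linearly independent from $e_0$, so both summands must vanish. This gives $\beta=0$ and $\langle a,b'\rangle=0$. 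As $\langle a,e_0\rangle = t(a)/2 = 0$, we have $\langle a, b\rangle = \langle a, b'\rangle$, and the condition $t(b)=0$ is exactly $b\perp e_0$. This proves (3).

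\textbf{General $a$.} Write $a=\alpha+a'$ with $\alpha=t(a)/2\neq 0$ and $a'$ pure, and let $b=\beta+b'$ as before. Expanding gives
$$ab+ba = 2\alpha\beta + 2\alpha b' + 2\beta a' + (a'b'+b'a').$$
By the pure case, $a'b'+b'a' = -2\langle a',b'\rangle$, which is a scalar. The anticommutation condition therefore splits into two equations:
\begin{itemize}
\item the pure part: $\alpha b' + \beta a' = 0$;
\item the scalar part: $\alpha\beta = \langle a',b'\rangle$.
\end{itemize}
The pure part gives $b' = -\frac{\beta}{\alpha}a'$. Substituting into the scalar part gives
$$\alpha\beta = -\frac{\beta}{\alpha}\,n(a'), \quad\text{i.e.}\quad \beta\,(\alpha^2+n(a'))=0.$$
Now $n(a) = a\bar a = (\alpha+a')(\alpha-a') = \alpha^2 - a'^2 = \alpha^2 + n(a')$, using $a'^2 = -a'\bar{a'} = -n(a')$. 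The condition thus reads $\beta\, n(a)=0$.

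\textbf{Conclusion.} If $n(a)\neq0$, then $\beta=0$, hence $b'=0$ and $b=0$, which is (1). If $n(a)=0$, then $\beta$ is free and
$$b=\beta - \frac{\beta}{\alpha}a' = \frac{\beta}{\alpha}(\alpha - a') = \frac{\beta}{\alpha}\,\bar a.$$
Conversely, $a\bar a+\bar a a = 2n(a) = 0$, so every multiple of $\bar a$ anticommutes with $a$. This gives $\Anc_{\A_n}(a)=\mathbb{F}\bar a$, which is (2).

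There is no real obstacle here. The only points needing care are the splitting into scalar and pure components, which uses $\chrs\mathbb{F}\neq2$, and the identity $a'^2=-n(a')$ for pure $a'$. Notably, no alternativity is needed, which is why the statement holds for all $n$.
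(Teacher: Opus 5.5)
Your proof is correct and complete. The paper gives no proof of its own for this lemma. It quotes the statement from \cite{our_anticomm}, where it is stated for real algebras, and only asserts that the argument carries over verbatim to an arbitrary field. Your argument supplies that verification directly and is the standard route. It consists of the identity $ab+ba=-2\langle a,b\rangle$ for pure $a,b$ (from Proposition~\ref{proposition:lambda-form}), the decomposition $ab+ba=2(\alpha\beta-\langle a',b'\rangle)+2(\alpha b'+\beta a')$ into a scalar part and a pure part, and the identity $n(a)=\alpha^2+n(a')$.

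Each step is an equivalence, so both inclusions in (1)--(3) are covered. The only places where the field matters are the two you flag, and both use $\chrs\mathbb{F}\neq 2$: a scalar plus a pure element vanishes only if both vanish, and a nonzero pure $a$ is linearly independent of $e_0$.
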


\begin{lemma} {\rm \cite[Lemma~8.10]{our_anticomm}} \label{lemma:difference-centralizer-orthogonalizer}
Let $x \in \A_n \setminus \{ 0 \}$, $t(x) = 0$. Then $C_{\A_n}(x) = \mathbb{F} \oplus O_{\A_n}(x) \oplus V$, where $\dim(V) \leq 1$.
\end{lemma}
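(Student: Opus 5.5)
Let $x \neq 0$ be pure, so $\bar{x} = -x$ and, by regularity of the involution, $x^2 = -x\bar{x} = -n(x) \in \mathbb{F}$. We will prove the claim by studying the linear map $\phi \colon C_{\A_n}(x) \to \A_n$, $\phi(b) = xb$, and showing that its image is small.

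First, $C_{\A_n}(x)$ splits along the decomposition $\A_n = \mathbb{F} \oplus \{ b : t(b) = 0 \}$. Indeed, $\mathbb{F} \subseteq C_{\A_n}(x)$, and if $b$ commutes with $x$ then so does $\Im(b) = b - \frac{t(b)}{2}$. Hence $C_{\A_n}(x) = \mathbb{F} \oplus C_0$ with $C_0 = C_{\A_n}(x) \cap \{ t = 0 \}$. Moreover $O_{\A_n}(x) \subseteq C_0$: if $xb = bx = 0$, then by Proposition~\ref{proposition:lambda-form} $t(b) = 2\langle b, e_0 \rangle$. Should this fail to be immediate, Lemma~\ref{lemma:inner-product-movement}(2) gives $\langle b, e_0\rangle x^2$-type relations, e.g. $\langle xb, x \rangle = \langle b, \bar{x}x \rangle = n(x)\langle b, e_0\rangle$, which handles the case $n(x)\ne 0$. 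In the case $n(x)=0$, use $t(xb)=0$ together with $t(xb) = 2\langle xb, e_0\rangle = 2\langle b, \bar{x}\rangle = -2\langle b,x\rangle$, which gives $b \perp x$ rather than $t(b)=0$. So the trace part must be handled with more care, and I would instead set up the decomposition via the kernel of $\phi$.

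The key step is the following. For $b \in C_0$ we have $xb = bx$, and since $x, b$ are both pure,
$$
xb + bx = -(x\bar{b} + b\bar{x}) = -2\langle x, b \rangle \in \mathbb{F}.
$$
Therefore $xb = -\langle x, b\rangle \in \mathbb{F}$ for every $b \in C_0$. So $\phi$ maps $C_0$ into $\mathbb{F}$, and its kernel is exactly $\{ b \in C_0 : xb = bx = 0 \} = O_{\A_n}(x) \cap C_0$. Consequently $\dim C_0 - \dim(O_{\A_n}(x)\cap C_0) \leq 1$. Choose $V$ to be a complement of dimension at most one; concretely, if $n(x)\neq0$ we may take $V = \mathbb{F}x$, since $x \in C_0$ and $x\cdot x = -n(x) \neq 0$.

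It remains to show that $O_{\A_n}(x) \subseteq C_0$, i.e.\ that orthogonal elements are pure. We have $t(xb)=0$ because $xb=0$. Using $bx=0$ and $xb=0$, compute $\bar{x}(xb)$: since the subalgebra generated by a single element with a second one is handled by flexibility/alternativity laws $[x,x,b] = 0$, which holds in all $\A_n$ as a consequence of the Cayley--Dickson construction (the identity $\bar{a}(ab) = n(a)b$), we get $n(x)b = 0$. Thus if $n(x)\ne0$, then $O_{\A_n}(x)=\{0\}$ and the claim is trivial. If $n(x)=0$, take $b \in O_{\A_n}(x)$. Then $b' = \Im(b)$ satisfies $xb' = -\frac{t(b)}{2}x$ and $b'x = -\frac{t(b)}{2}x$, so $b'\in C_0$. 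On the other hand $xb' \in \mathbb{F}$ forces $t(b)x \in \mathbb{F}$, hence $t(b) = 0$. This gives $O_{\A_n}(x) \subseteq C_0$.

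The main obstacle is justifying $\bar{x}(xb) = n(x)b$ in a non-alternative $\A_n$. I would prove it by induction using the Cayley--Dickson formulas; this is the standard fact that every element alternates with every other element in the weak sense $[a,a,b]=0$ for $\A_n$. If this fails in general, the argument in the last paragraph already suffices without it, since it only uses $xb' \in \mathbb{F}$ and $x \notin \mathbb{F}$.
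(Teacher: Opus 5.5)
Your final argument is correct and is essentially the paper's. The paper writes $O_{\A_n}(x)=C_{\A_n}(x)\cap\Anc_{\A_n}(x)$ and uses Lemma~\ref{lemma:A_n-anticomm}(3) to see that $\Anc_{\A_n}(x)$ consists of pure elements, so $O_{\A_n}(x)\subseteq\Im(C_{\A_n}(x))$. It then notes that inside $\Im(C_{\A_n}(x))$ membership in $\Anc_{\A_n}(x)$ is the single linear condition $\langle x,y\rangle=0$. That condition is exactly the kernel of your functional $b\mapsto xb=-\langle x,b\rangle$ on $C_0$, and your $\Im(b)$ trick re-derives by hand the purity of $O_{\A_n}(x)$ that the paper quotes from Lemma~\ref{lemma:A_n-anticomm}.

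You should, however, delete the claim that $\bar{x}(xb)=n(x)b$ (equivalently $[x,x,b]=0$) holds for all $x,b\in\A_n$. This is false for $n\geq 4$, since $\A_n$ is not alternative. The conclusion you draw from it, that $n(x)\neq 0$ forces $O_{\A_n}(x)=\{0\}$, is also false. For instance, in the sedenions $\Main_4$ every nonzero zero divisor $x$ is pure, has $n(x)\neq 0$, and has $O_{\Main_4}(x)=r.\Ann_{\Main_4}(x)\neq\{0\}$ (cf.\ Lemma~\ref{lemma:moreno-zero-divisor-conditions}).

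Fortunately your $\Im(b)$ argument never uses $n(x)=0$. It only needs $xb'\in\mathbb{F}$ for $b'\in C_0$ and $x\notin\mathbb{F}$, so it gives $O_{\A_n}(x)\subseteq C_0$ for every pure $x\neq 0$ with no case split. For $n(x)\neq 0$, your earlier identity $\langle x,xb\rangle=n(x)\langle e_0,b\rangle$ also works. With that false step removed, the proof is complete.
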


\begin{proof}
It is clear that $\mathbb{F} \subseteq C_{\A_n}(x)$, so it is sufficient to show that $\Im(C_{\A_n}(x))= O_{\A_n}(x) \oplus V$, where $\dim(V) \leq 1$. By Lemma~\ref{lemma:A_n-anticomm}, $\Anc_{\A_n}(x) \subset \Im(\A_n)$, so we have
$$
O_{\A_n}(x) = C_{\A_n}(x) \cap \Anc_{\A_n}(x) = \Im(C_{\A_n}(x)) \cap \Anc_{\A_n}(x).
$$
Since for any $y \in \Im(C_{\A_n}(x))$ (and thus, $t(y)=0$) the condition $y \in \Anc_{\A_n}(x)$ is given by one linear equation, it holds that
$\dim(\Im(C_{\A_n}(x)))-\dim(O_{\A_n}(x)) \leq 1$.
\end{proof}

\begin{lemma} {\rm \cite[Lemma~8.11]{our_anticomm}} \label{lemma:A_n-commutativity-through-orthogonality}
Let $x \in \A_n \setminus \{ 0 \}$, $t(x) = 0$. Then
\begin{enumerate}[\rm (1)]
\item if $n(x) = 0$ and $n \leq 3$, then $C_{\A_n}(x) = \mathbb{F} \oplus O_{\A_n}(x)$;
\item if $n(x) \neq 0$, then $C_{\A_n}(x) = \mathbb{F} \oplus \mathbb{F}x \oplus O_{\A_n}(x)$.
\end{enumerate}
\end{lemma}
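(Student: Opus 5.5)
Throughout, fix a pure element $x \neq 0$ of $\A_n$. Both parts will come from Lemma~\ref{lemma:difference-centralizer-orthogonalizer} and Lemma~\ref{lemma:A_n-anticomm}(\ref{item:A_n-anticomm-pure}); the key input in each case is a description of the pure elements of $C_{\A_n}(x)$. Since $x$ is pure, $\bar x = -x$, and so $x^2 = -x\bar x = -n(x)$. Two facts are used below. First, for pure $y$ the product $xy$ is the reverse conjugate of $yx$: indeed $\overline{xy} = \bar y\,\bar x = yx$. Second, $t(xy) = 2\langle x, \bar y\rangle = -2\langle x, y\rangle$. Hence for pure $y$,
$$xy + yx = t(xy) = -2\langle x, y \rangle,$$
and $y$ commutes with $x$ exactly when $xy - yx = 0$.

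\textbf{Part (2).} Assume $n(x) \neq 0$. Clearly $\mathbb{F}$, $\mathbb{F}x$ and $O_{\A_n}(x)$ all lie in $C_{\A_n}(x)$. For pairwise orthogonality with respect to $\langle\cdot,\cdot\rangle$: $x \perp e_0$ because $t(x) = 0$. Further, $O_{\A_n}(x) \subseteq \Anc_{\A_n}(x) = \spn(e_0, x)^\perp$ by Lemma~\ref{lemma:A_n-anticomm}(\ref{item:A_n-anticomm-pure}). Because $n(x) \neq 0$, the restriction of the form to $\spn(e_0, x)$ is nondegenerate, so the sum $\mathbb{F} \oplus \mathbb{F}x \oplus O_{\A_n}(x)$ is direct and orthogonal.

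For the reverse inclusion, take $y \in C_{\A_n}(x)$ and write $y = \alpha + \beta x + y'$ with $y' \in \spn(e_0, x)^\perp$. This decomposition exists because $\spn(e_0,x)$ is nondegenerate. Then $y'$ commutes with $x$, and $y' \perp x$, so by the identity above $xy' + y'x = 0$. Hence $xy' = y'x = 0$, that is, $y' \in O_{\A_n}(x)$. This also shows that the space $V$ of Lemma~\ref{lemma:difference-centralizer-orthogonalizer} is $\mathbb{F}x$.

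\textbf{Part (1).} Assume $n(x) = 0$ and $n \leq 3$. Now $x \in \spn(e_0, x)^\perp$ and $x^2 = 0$, so $x \in O_{\A_n}(x)$. The sum $\mathbb{F} \oplus O_{\A_n}(x)$ is direct and orthogonal, since $O_{\A_n}(x)$ consists of pure elements. It remains to show $\Im(C_{\A_n}(x)) = O_{\A_n}(x)$, i.e. that every pure $y$ commuting with $x$ satisfies $\langle x, y\rangle = 0$.

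This is the main obstacle, and it is where alternativity ($n \leq 3$) is essential. Suppose $y$ is pure, $xy = yx$, and $\langle x, y\rangle = \lambda \neq 0$. Then $xy = yx = -\lambda$. Using the alternative law, $x(xy) = x^2 y = 0$. On the other hand, $x(xy) = -\lambda x$, which is nonzero. This contradiction gives $\lambda = 0$, and hence $y \in O_{\A_n}(x)$.

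I expect the only delicate points to be getting the sign conventions right in $xy + yx = -2\langle x,y\rangle$ and confirming that the left alternative law is legitimately available for $n \leq 3$.
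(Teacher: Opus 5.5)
Your proof is correct. Part (1) is the paper's argument. The paper observes that $\overline{xy}=\bar y\bar x=yx=xy$, so $xy=k\in\mathbb{F}$, and then gets $k=0$ from $0=x^2y=x(xy)=kx$. You reach the same scalar, $xy=yx=-\langle x,y\rangle$, via the identity $xy+yx=-2\langle x,y\rangle$, and then use the same left alternative law.

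Part (2) takes a slightly different route. The paper proves only the inclusion $\mathbb{F}\oplus\mathbb{F}x\oplus O_{\A_n}(x)\subseteq C_{\A_n}(x)$, using $x\notin O_{\A_n}(x)$ when $n(x)\neq 0$. It then closes the gap with the dimension bound of Lemma~\ref{lemma:difference-centralizer-orthogonalizer}: $\Im(C_{\A_n}(x))$ exceeds $O_{\A_n}(x)$ by at most one dimension. You instead split an arbitrary $y\in C_{\A_n}(x)$ along the nondegenerate plane $\spn(e_0,x)$ and show directly that the orthogonal component lies in $C_{\A_n}(x)\cap\Anc_{\A_n}(x)=O_{\A_n}(x)$.

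This is essentially the proof of Lemma~\ref{lemma:difference-centralizer-orthogonalizer} unpacked, since the "single linear condition" there is exactly $\langle x,y\rangle=0$. So neither version is stronger. Yours is self-contained, makes explicit that the extra summand is exactly $\mathbb{F}x$, and shows that $n(x)\neq 0$ is used precisely through nondegeneracy of $\spn(e_0,x)$. The paper's version is a one-line consequence of the preceding lemma.

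A small point: your opening sentence says both parts rest on Lemma~\ref{lemma:difference-centralizer-orthogonalizer}, but neither of your arguments actually invokes it.
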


\begin{proof}
It is clear that we have an inclusion $C_{\A_n}(x) \supseteq \mathbb{F} + \mathbb{F}x + O_{\A_n}(x)$. Note that if $y \in O_{\A_n}(x)$, then $t(y) = 0$, so, by Proposition~\ref{proposition:lambda-form}, $\langle x, y \rangle = \frac{1}{2} t(x\bar{y}) = -\frac{1}{2} t(xy) = 0$. Since $n(x) = x\bar{x} = -x^2$, the conditions $n(x)=0$ and $x \in O_{\A_n}(x)$ are equivalent. Consider two cases:
\begin{enumerate}[\rm (1)]
\item If $n(x)=0$, then this inclusion takes the form $C_{\A_n}(x) \supseteq \mathbb{F} \oplus O_{\A_n}(x)$. We now show that for $n \leq 3$ the converse inclusion also holds. Let $y \in C_{\A_n}(x)$ and $t(y) = 0$. Since $n \leq 3$, we can use alternativity of $\A_n$. Note that $\overline{xy} = \bar{y}\bar{x} = yx = xy$, so $xy = k \in \mathbb{F}$. Then $0 = x^2y = x(xy) = kx$, and thus $k = 0$, that is, $y \in O_{\A_n}(x)$.
\item If $n(x) \neq 0$, then this inclusion takes the form $C_{\A_n}(x) \supseteq \mathbb{F} \oplus \mathbb{F}x \oplus O_{\A_n}(x)$. The converse inclusion follows from Lemma~\ref{lemma:difference-centralizer-orthogonalizer} for the reasons of dimension. \qedhere
\end{enumerate}
\end{proof}

\begin{example}
If $\A_n = \Main_n$ is a real algebra of the main sequence, then any element $x \in \Main_n \setminus \{ 0 \}$, $t(x) = 0$, satisfies the conditions of Lemma~\ref{lemma:A_n-commutativity-through-orthogonality}(2).
\end{example}

\section{Alternative subalgebras} \label{section:alternative-subalgebras}

In this section we determine a sufficient condition for two or three elements to generate an associative or an alternative subalgebra in an arbitrary Cayley--Dickson algebra, and we give an explicit multiplication table for the elements of this subalgebra. It should be noted that the statements~\ref{lemma:tilde-properties}, \ref{lemma:quaternionic-subalgebra}--\ref{theorem:nonspecial-quaternionic-subalgebra} and~\ref{theorem:octonionic-subalgebra} have already been partially proved in the author's paper~\cite{our_orthographs1} for the case of real Cayley--Dickson algebras. Corollary~\ref{corollary:alternative-subalgebras} has also been formulated in this paper (see~\cite[Corollary~5.9]{our_orthographs1}), but it has been assumed that the elements $x$ and $y$ alternate strongly, and its proof contained an inaccuracy: namely, there were considered orthogonal projections with respect to subspaces with possibly degenerate norm.

For $n \geq 4$ the algebra $\A_n$ is not alternative, and thus it is not a composition algebra. However, as the following lemma shows, the composition identity still holds for those elements which alternate with each other. In~\cite{moreno_alternative,our_split-algebras} it is formulated for real Cayley--Dickson algebras only, however, its proof remains valid for an arbitrary field $\mathbb{F}$, $\chrs \mathbb{F} \neq 2$.

\begin{lemma} {\rm \cite[p. 15]{moreno_alternative}, \cite[Lemma~4.8]{our_split-algebras}} \label{lemma:alternative-elements-are-normed}
Let $a, b \in \A_n$, $[a,a,b] = 0$. Then $n(ab) = n(ba) = n(a)n(b)$.
\end{lemma}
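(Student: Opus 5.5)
We will reduce the statement to a short computation with the bilinear form $\langle\cdot,\cdot\rangle$. The tools are the regularity of the involution, flexibility of $\A_n$, and the inner product movement rules of Lemma~\ref{lemma:inner-product-movement}(2). The main difficulty is that associativity is unavailable. So we cannot write $n(ab) = (ab)(\bar b\bar a) = a(b\bar b)\bar a$ directly. Instead we express $n(ab)$ as $\langle ab, ab\rangle$ and move factors across the form.

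\textbf{Step 1: the identity for $n(ab)$.} By Lemma~\ref{lemma:inner-product-movement}(2),
\[
n(ab)=\langle ab,ab\rangle=\langle \bar a(ab), b\rangle .
\]
Since $\bar a = t(a)-a$ with $t(a)\in\mathbb{F}$, the associator is linear, and $[1,a,b]=0$, we have
\[
[\bar a,a,b] = t(a)[1,a,b]-[a,a,b] = -[a,a,b] = 0.
\]
Hence $\bar a(ab) = (\bar a a)b = n(a)b$. Therefore $n(ab)=n(a)\langle b,b\rangle = n(a)n(b)$.

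\textbf{Step 2: the identity for $n(ba)$.} Similarly,
\[
n(ba)=\langle ba,ba\rangle=\langle (ba)\bar a, b\rangle,
\]
using the rule $\langle x\bar z,y\rangle=\langle x,yz\rangle$ with $z=a$. We then need $(ba)\bar a = b(a\bar a)$, that is, $[b,a,\bar a]=0$, which reduces to $[b,a,a]=0$. This is where flexibility enters. In a flexible algebra, linearizing $[x,y,x]=0$ gives $[x,y,z]=-[z,y,x]$. Hence $[b,a,a]=-[a,a,b]=0$. Then $(ba)\bar a = b\,n(a)$, and so $n(ba)=n(a)n(b)$.

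\textbf{Expected obstacle.} The only delicate point is the bookkeeping in Lemma~\ref{lemma:inner-product-movement}(2): we must check which side each factor moves to and that conjugation is applied correctly. This bookkeeping is where we will be careful. If the right-multiplication form gave trouble, an alternative route would be $n(ba)=n(\overline{ba})=n(\bar a\bar b)$. We would then apply Step 1 to the pair $(\bar a,\bar b)$, which satisfies $[\bar a,\bar a,\bar b]=\pm[a,a,b]$ by linearity in the trace decomposition. Finally we would use $n(\bar x)=n(x)$.
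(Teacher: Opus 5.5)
Your proof is correct, and it takes the standard route. You write $n(ab)=\langle \bar a(ab),b\rangle$ and $n(ba)=\langle (ba)\bar a,b\rangle$ via Lemma~\ref{lemma:inner-product-movement}(2), then kill the associators using $[\bar a,a,b]=-[a,a,b]=0$ and, by flexibility, $[b,a,\bar a]=-[b,a,a]=[a,a,b]=0$. The paper gives no proof here and only cites Moreno and the author's earlier paper, so this standard argument is the intended one, and nothing in it uses $\mathbb{F}=\mathbb{R}$. Your fallback route through $n(ba)=n(\bar a\bar b)$ and $[\bar a,\bar a,\bar b]=-[a,a,b]$ is also valid, and it avoids flexibility altogether.
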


Similarly to~\cite{moreno, moreno_alternative, moreno_constructing}, we denote $\til{e}_0 = (0,e_0) \in \A_n$ and $\til{a} = a \til{e}_0$ for all $a \in \A_n$. 

\begin{lemma} \label{lemma:tilde-properties}
Let $a,b \in \A_n$, and $b$ be doubly pure. Then
\begin{enumerate}[\rm (1)]
    \item $\til{\til{a}} = \gamma_{n-1}a$;
    \item $\til{a}b = -\til{ab}$;
    \item $\til{a} \perp a$.
\end{enumerate}
If $a$ is also doubly pure, then
\begin{enumerate}[\rm (1)]
\setcounter{enumi}{3}
    \item $\til{a}b + \til{b}a = 0$ if and only if $a \perp b$;
    \item $\gamma_{n-1}ab + \til{b}\til{a} = 0$ if and only if $\til{a} \perp b$.
\end{enumerate}
\end{lemma}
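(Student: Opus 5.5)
Write $a = (a_1, a_2)$ and $b = (b_1, b_2)$ with $a_i, b_i \in \A_{n-1}$, and set $\gamma = \gamma_{n-1}$. The plan is to compute everything directly in components from the multiplication rule of Definition~\ref{definition:cayley-dickson-algebras}. First note that $\til{a} = (a_1,a_2)(0,e_0) = (\gamma a_2, a_1)$. Hence $\til{\til{a}} = (\gamma a_1, \gamma a_2) = \gamma a$, which is (1).

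For (3) we use Proposition~\ref{proposition:real-cayley-dickson-properties}. Polarizing it gives
$$\langle (x_1,x_2),(y_1,y_2)\rangle = \langle x_1,y_1\rangle - \gamma\langle x_2,y_2\rangle .$$
Then $\langle \til{a}, a\rangle = \gamma\langle a_2,a_1\rangle - \gamma\langle a_1,a_2\rangle = 0$.

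For (2) we use that $b$ is doubly pure, i.e. $\bar b_1 = -b_1$ and $\bar b_2 = -b_2$. First,
$$\til{a}b = (\gamma a_2 b_1 + \gamma \bar b_2 a_1,\; b_2\gamma a_2 + a_1\bar b_1) = (\gamma a_2 b_1 - \gamma b_2 a_1,\; \gamma b_2 a_2 - a_1 b_1).$$
Next,
$$ab = (a_1b_1 + \gamma\bar b_2 a_2,\; b_2 a_1 + a_2\bar b_1) = (a_1b_1 - \gamma b_2a_2,\; b_2a_1 - a_2b_1),$$
so $\til{ab} = (\gamma(b_2a_1 - a_2b_1),\; a_1b_1 - \gamma b_2a_2)$. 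Comparing the two expressions gives $\til{a}b = -\til{ab}$.

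For (4) and (5), $a$ is also doubly pure. By (2) applied twice (with roles swapped), $\til{a}b + \til{b}a = -\til{(ab+ba)}$. Since right multiplication by $\til{e}_0$ is injective by (1) (because $\gamma\neq0$), (4) reduces to the claim $ab+ba=0 \iff a\perp b$. This follows from Lemma~\ref{lemma:A_n-anticomm}(3): $a$ is pure, and $b$ is pure, so the condition $t(b)=0$ there is automatic. (If $a=0$ the statement is trivial.)

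For (5), first use (2) to get $\til{b}\til{a} = -\til{(b\til{a})}$; note $\til{a}$ is doubly pure, since its components are $\gamma a_2$ and $a_1$, both pure. By (2) again, $b\til{a}$ should be expressed via $\til{a}b$. Here we use Lemma~\ref{lemma:A_n-anticomm}(3) once more: both $b$ and $\til a$ are pure, so $b\til{a} = -\til{a}b + 2\langle \til a, b\rangle$ (up to the sign convention $xy+yx = -2\langle x,y\rangle$ for pure elements, which comes from Proposition~\ref{proposition:lambda-form}). Hence
$$\til{b}\til{a} = \til{(\til{a}b)} - 2\langle\til a,b\rangle\, \til{e}_0\cdot(\pm1).$$
Moreover $\til{(\til{a}b)} = -\til{\til{(ab)}} = -\gamma ab$ by (2) and (1). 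Therefore
$$\gamma ab + \til{b}\til{a} = c\,\langle \til a, b\rangle\,\til{e}_0$$
for a nonzero constant $c = \pm 2$, and this vanishes if and only if $\til{a}\perp b$.

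The main obstacle is bookkeeping: getting the signs in the component formulas right, and correctly identifying the scalar term in $xy+yx$ for pure $x,y$ (it equals $-2\langle x,y\rangle$, since $x\bar y + y\bar x = 2\langle x,y\rangle$ and $\bar y=-y$).
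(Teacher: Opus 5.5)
Your proof is correct and follows essentially the same route as the paper: componentwise computation for (1) and (2), then reduction of (4) and (5) to the anticommutation criterion for pure elements (Lemma~\ref{lemma:A_n-anticomm}), using (1), (2) and injectivity of $x \mapsto \til{x}$. The differences are minor. For (3) you polarize the norm formula of Proposition~\ref{proposition:real-cayley-dickson-properties}, whereas the paper writes $\langle a, a\til{e}_0\rangle = \langle \bar{a}a, \til{e}_0\rangle = n(a)\langle e_0,\til{e}_0\rangle = 0$ via Lemma~\ref{lemma:inner-product-movement}(2). In (5) your hedged sign resolves as $b\til{a} = -\til{a}b - 2\langle \til{a}, b\rangle$, which gives the explicit identity $\gamma_{n-1}ab + \til{b}\til{a} = 2\langle \til{a}, b\rangle\,\til{e}_0$ in place of the paper's chain of equivalences.
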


\begin{proof}
Let $a=(a_1,a_2)$, $b=(b_1,b_2)$. By definition, $\til{a} = (a_1,a_2)(0,e_0) = (\gamma_{n-1}a_2,a_1)$.
\begin{enumerate}[\rm (1)]
\item It holds that $\til{\til{a}} = \til{(\gamma_{n-1}a_2,a_1)} = (\gamma_{n-1}a_1, \gamma_{n-1}a_2) = \gamma_{n-1}a$.
\item Since $b$ is doubly pure, we have
    \begin{align*}
    \til{a}b = (\gamma_{n-1}a_2,a_1)(b_1,b_2) = (\gamma_{n-1}a_2b_1 + \gamma_{n-1}\bar{b}_2a_1, \gamma_{n-1}b_2a_2 + a_1\bar{b}_1) &=\\
    = -(\gamma_{n-1}(b_2a_1 + a_2\bar{b}_1), a_1b_1 + \gamma_{n-1}\bar{b}_2a_2) &= -\til{ab}.
    \end{align*}
\item By Lemma~\ref{lemma:inner-product-movement}(2), $\langle a, \til{a} \rangle = \langle a, a\til{e}_0 \rangle = \langle \bar{a}a, \til{e}_0 \rangle = \langle n(a)e_0, \til{e}_0 \rangle = 0$.
\item By Lemma~\ref{lemma:A_n-anticomm}, $a \perp b$ if and only if $ab = -ba$, which is equivalent to $-\til{a}b = \til{ab} = -\til{ba} = \til{b}a$.
\item By Lemma~\ref{lemma:A_n-anticomm}, $\til{a} \perp b$ if and only if $\til{a}b = -b\til{a}$ or, equivalently, $-\gamma_{n-1}ab = - \til{\til{ab}} = \til{\til{a}b} = -\til{b\til{a}} = \til{b}\til{a}$. \qedhere
\end{enumerate}
\end{proof}

\begin{corollary} \label{corollary:alternative-subalgebras}
Let $x,y \in \A_{n-1}$. Then in $\A_n$ we have
$$
x\til{y} = \til{yx}, \quad \til{x}y = \til{x\bar{y}}, \quad \til{x}\til{y} = \gamma_{n-1}\bar{y}x.
$$
\end{corollary}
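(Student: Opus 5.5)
Here $x,y\in\A_{n-1}$ are identified with $(x,0),(y,0)\in\A_n$, and $\til{e}_0=(0,e_0)$. The plan is a direct computation from the multiplication rule of Definition~\ref{definition:cayley-dickson-algebras} with parameter $\gamma_{n-1}$. The key observation is that multiplying $(x,0)$ on the right by $\til{e}_0$ gives
$$
\til{x}=(x,0)(0,e_0)=(x\cdot 0+\gamma_{n-1}\bar{e}_0\cdot 0,\; e_0x+0\cdot\bar 0)=(0,x).
$$
Consequently every element appearing in the three identities is of the form $(u,0)$ or $(0,v)$ with $u,v\in\A_{n-1}$. The proof then reduces to three one-line evaluations of the product $(a,b)(c,d)=(ac+\gamma_{n-1}\bar{d}b,\,da+b\bar{c})$.

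The three evaluations are as follows.
\begin{itemize}
\item For the first identity,
$$
x\til{y}=(x,0)(0,y)=(0,\,yx)=\til{yx}.
$$
\item For the second identity,
$$
\til{x}y=(0,x)(y,0)=(0,\,x\bar{y})=\til{x\bar{y}}.
$$
\item For the third identity,
$$
\til{x}\til{y}=(0,x)(0,y)=(\gamma_{n-1}\bar{y}x,\,0)=\gamma_{n-1}\bar{y}x.
$$
\end{itemize}

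No purity assumptions are needed, in contrast with Lemma~\ref{lemma:tilde-properties}, because the second component of $x$ and $y$ vanishes. The only point requiring care is the bookkeeping of the order of factors in the Cayley--Dickson product. In particular, one must keep $da$ rather than $ad$, and $\bar{d}b$ rather than $b\bar{d}$, since $\A_{n-1}$ is noncommutative. I do not expect any genuine obstacle.
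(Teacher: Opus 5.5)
Your proposal is correct and follows essentially the same approach as the paper: it identifies $\til{z}=(0,z)$ for $z\in\A_{n-1}$ (the paper cites Lemma~\ref{lemma:tilde-properties} for this, while you compute it directly, which amounts to the same thing) and then evaluates the three products $(x,0)(0,y)$, $(0,x)(y,0)$, $(0,x)(0,y)$ with the Cayley--Dickson multiplication rule. All three of your evaluations check out.
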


\begin{proof}
For any $z \in \A_{n-1}$ we have $z = (z,0)$ in $\A_n$, so, by Lemma~\ref{lemma:tilde-properties}, it holds that $\til{z} = (0, z)$. Then
\begin{align*}
    x\til{y} &= (x,0)(0,y) = (0, yx) = \til{yx},\\
    \til{x}y &= (0,x)(y,0) = (0,x\bar{y}) = \til{x\bar{y}},\\
    \til{x}\til{y} &= (0,x)(0,y) = (\gamma_{n-1}\bar{y}x,0) = \gamma_{n-1}\bar{y}x. \tag*{\qedhere}
\end{align*}
\end{proof}

Note that in Lemma~\ref{lemma:quaternionic-subalgebra} and Theorems~\ref{theorem:nonspecial-quaternionic-subalgebra} and~\ref{theorem:octonionic-subalgebra} we allow $n(a)$ and $n(b)$ to be equal to zero, in contrast to the usual definition of Cayley--Dickson algebras.

\begin{lemma} \label{lemma:quaternionic-subalgebra}
Let $a \in \A_n$ be doubly pure. Consider $\mathbb{H}_{a} = \spn(e_0,a,\til{e}_0,\til{a})$.  Then there exists a surjective homomorphism $\phi_a: \A_2 \{ -n(a), \gamma_{n-1}\} \to \mathbb{H}_{a}$, so $\mathbb{H}_{a}$ is an associative subalgebra in $\A_n$. If, moreover, $n(a) \neq 0$, then $\phi_a$ is an isomorphism.
\end{lemma}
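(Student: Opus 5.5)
Define $\phi_a$ on the standard basis of $\A_2\{-n(a),\gamma_{n-1}\}$ by
\[
e^{(2)}_0 \mapsto e_0,\quad e^{(2)}_1 \mapsto a,\quad e^{(2)}_2 \mapsto \til{e}_0,\quad e^{(2)}_3 \mapsto \til{a},
\]
and extend linearly. The map is surjective onto $\mathbb{H}_a$ by construction. We then show that it is multiplicative by comparing the two multiplication tables on basis elements; bilinearity of both products reduces the homomorphism property to these sixteen products. Associativity of $\mathbb{H}_a$ follows: it is the image of the associative algebra $\A_2$ under a homomorphism, so it is closed under multiplication (hence a subalgebra) and associative.

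First I compute the table in $\A_2\{\gamma_0,\gamma_1\}$ with $\gamma_0=-n(a)$ and $\gamma_1=\gamma_{n-1}$, using Definition~\ref{definition:cayley-dickson-algebras}. Write $i=e_1$, $j=e_2$, $k=e_3=ij$. Then
\[
i^2=\gamma_0,\quad j^2=\gamma_1,\quad k^2=-\gamma_0\gamma_1,\quad ji=-k,\quad ik=\gamma_0 j,\quad ki=-\gamma_0 j,\quad jk=-\gamma_1 i,\quad kj=\gamma_1 i.
\]
These follow quickly from the doubling formula, e.g.\ $j^2=(0,1)(0,1)=(\gamma_1,0)$ and $k=(i,0)\cdot$ wait, $k=(0,i)$, so $k^2=(\gamma_1\bar{i}i,0)=(\gamma_1 n(i)\cdot(-1)\ldots)$; concretely $k^2=\gamma_1\,\bar i\, i$ with $\bar i=-i$, giving $-\gamma_1 i^2=-\gamma_0\gamma_1$.

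Next I verify the same relations in $\A_n$ for $e_0,a,\til{e}_0,\til{a}$, using that $a$ and $\til{e}_0$ are doubly pure, together with Lemma~\ref{lemma:tilde-properties}.
\begin{itemize}
\item $a^2=-a\bar a=-n(a)=\gamma_0$, since $t(a)=0$ gives $\bar a=-a$.
\item $\til{e}_0^{\,2}=\gamma_{n-1}$, by Lemma~\ref{lemma:tilde-properties}(1) applied to $e_0$.
\item $a\til{e}_0=\til{a}$ by definition.
\item $\til{e}_0 a=\til{e_0 a}\cdot(-1)=-\til{a}$, by Lemma~\ref{lemma:tilde-properties}(2) with $b=a$.
\item $\til{a}^{\,2}=\til{a}\,\til{a}$. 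Lemma~\ref{lemma:tilde-properties}(2) requires its second argument to be doubly pure. Since $\til{a}=(\gamma_{n-1}a_2,a_1)$ with $t(a_1)=t(a_2)=0$, $\til{a}$ is itself doubly pure. Hence $\til{a}\,\til{a}=-\til{a\til{a}}$, while $a\til{a}=a(a\til{e}_0)$ needs separate handling. It is simpler to observe that $\til{a}$ is pure, so $\til{a}^{\,2}=-n(\til{a})=-n(a)\,n(\til{e}_0)$ by Lemma~\ref{lemma:alternative-elements-are-normed} (we have $[a,a,\til{e}_0]=0$ as shown below). With $n(\til{e}_0)=-\gamma_{n-1}$ from Proposition~\ref{proposition:real-cayley-dickson-properties}, this gives $\til{a}^{\,2}=n(a)\gamma_{n-1}=-\gamma_0\gamma_1$.
\item $a\til{a}$: Lemma~\ref{lemma:tilde-properties}(2) applied to $\til{e}_0\,a$ only handles one order. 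Instead, compute directly with components $a=(a_1,a_2)$ and $\til{a}=(\gamma_{n-1}a_2,a_1)$ via the doubling formula, using $\bar a_i=-a_i$. The first component is $\gamma_{n-1}(a_1a_2+\bar a_1 a_2)=0$. The second component is $a_1a_1+a_2\gamma_{n-1}\bar a_2=-n(a_1)+\gamma_{n-1}n(a_2)=-n(a)$. So $a\til{a}=-n(a)\til{e}_0=\gamma_0\til{e}_0$, as required. The same direct component computation gives $[a,a,\til{e}_0]=0$, since $a(a\til{e}_0)=\gamma_0\til{e}_0=(aa)\til{e}_0$.
\item $\til{a}a=-a\til{a}$ holds because $\til{a}$ and $a$ are pure and $\til{a}\perp a$ by Lemma~\ref{lemma:tilde-properties}(3), so they anticommute by Lemma~\ref{lemma:A_n-anticomm}(3).
\item Likewise $\til{e}_0\til{a}=-\til{a}\,\til{e}_0$, since $\til{a}\perp\til{e}_0$ by Lemma~\ref{lemma:inner-product-movement}(2): $\langle \til{a},\til{e}_0\rangle=\langle a,\til{e}_0\til{e}_0^{-}\rangle$, which is proportional to $\langle a,e_0\rangle=0$. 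Then $\til{a}\,\til{e}_0=(a\til{e}_0)\til{e}_0=\til{\til{a}}=\gamma_{n-1}a$ by Lemma~\ref{lemma:tilde-properties}(1), matching $kj=\gamma_1 i$.
\end{itemize}

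The main obstacle is the products involving $\til{a}$ on both sides. I would settle all of them by direct component computation with the doubling formula rather than by the lemma.

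Finally, suppose $n(a)\neq0$. Then $e_0,a,\til{e}_0,\til{a}$ are pairwise orthogonal with norms $1$, $n(a)$, $-\gamma_{n-1}$, and $-\gamma_{n-1}n(a)$, all nonzero. Pairwise orthogonal vectors of nonzero norm are linearly independent, so $\dim\mathbb{H}_a=4$ and $\phi_a$ is a bijection, hence an isomorphism.
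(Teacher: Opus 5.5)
Your proof is correct and follows the paper's route. You define $\phi_a$ on the standard basis, match the sixteen basis products with the table of $\A_2\{-n(a),\gamma_{n-1}\}$, pull associativity back through the surjection, and get injectivity from the orthogonal system $e_0,a,\til{e}_0,\til{a}$ with nonzero norms. The differences are cosmetic. You compute $a\til{a}=-n(a)\til{e}_0$ directly in components, which also gives you $[a,a,\til{e}_0]=0$ explicitly before you invoke Lemma~\ref{lemma:alternative-elements-are-normed}; the paper leaves that hypothesis implicit. The paper instead gets $\til{a}a=n(a)\til{e}_0$ from Lemma~\ref{lemma:tilde-properties}(2) and uses pairwise anticommutation.
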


\begin{proof}
We denote $\mu_1 = n(a)$, $\mu_2 = n(\til{e}_0) = -\gamma_{n-1}$. Since $a$ and $\til{e}_0$ are pure, we have $a^2 = -n(a) = -\mu_1$ and $(\til{e}_0)^2 = -n(\til{e}_0) = -\mu_2$. The condition $a \in \spn(e_0,\til{e}_0)^{\perp}$ implies $\til{a} \in \spn(e_0,\til{e}_0)^{\perp}$. By Lemma~\ref{lemma:tilde-properties}(3), $\til{a} \perp a$, so $a, \til{e}_0, \til{a}$ anticommute pairwise. It remains to note that $\til{a}a = -\til{aa} = \til{\mu_1 e_0} = \mu_1 \til{e}_0$ by Lemma~\ref{lemma:tilde-properties}(2), $\til{a}\til{e}_0 = \til{\til{a}} = \gamma_{n-1} a = -\mu_2 a$ by Lemma~\ref{lemma:tilde-properties}(1), and $(\til{a})^2 = -n(\til{a}) = -n(a\til{e}_0) = -n(a)n(\til{e}_0) = -\mu_1 \mu_2$ by Lemma~\ref{lemma:alternative-elements-are-normed}. Hence we have the following multiplication table in $\mathbb{H}_{a}$:
\begin{table}[H]
\centering
$
\begin{array}{|c|cccc|}
\hline
\times          & e_0             & a                     & \til{e}_0 & \til{a}          \\\hline
e_0             & e_0             & a                     & \til{e}_0 & \til{a}          \\
a               & a               & -\mu_1                & \til{a}   & -\mu_1 \til{e}_0 \\
\til{e}_0 & \til{e}_0 & -\til{a}        & -\mu_2          & \mu_2 a                \\
\til{a}   & \til{a}   & \mu_1 \til{e}_0 & -\mu_2 a        & -\mu_1 \mu_2           \\\hline
\end{array}
$
\caption{\label{table:quaternionic-subalgebra} Multiplication table in $\mathbb{H}_a$.}
\end{table}
Now we may define $\phi_a: \A_2 \{ -\mu_1, -\mu_2\} \to \mathbb{H}_a$ by $\phi_a(e_0) = e_0$, $\phi_a(e_1) = a$, $\phi_a(e_2) = \til{e}_0$, $\phi_a(e_3) = \til{a}$. The multiplication table~\ref{table:quaternionic-subalgebra} coincides with the multiplication table of $\A_2\{ -\mu_1, -\mu_2\}$, and $e_0,e_1,e_2,e_3$ form a basis in $\A_2\{ -\mu_1, -\mu_2\}$. Hence every nontrivial relation in $\A_2 \{ -\mu_1, -\mu_2\}$ is preserved under $\phi_a$, so $\phi_a$ is indeed a homomorphism. Clearly, $\phi_a$ is surjective, since $\mathbb{H}_a = \spn(e_0,a,\til{e}_0,\til{a})$.

In order to prove the last statement of the lemma, we use the fact that $e_0, a, \til{e}_0, \til{a}$ form an orthogonal system with respect to the symmetric bilinear form $\langle \cdot, \cdot \rangle$. If $n(a) \neq 0$, then $n(\til{a}) = n(a) n(\til{e}_0) \neq 0$, so $e_0, a, \til{e}_0, \til{a}$ are linearly independent, and thus $\phi_a$ is an isomorphism.
\end{proof}

\begin{remark}
Note that if $n(a) = 0$, then $\phi_a$ in Lemma~\ref{lemma:quaternionic-subalgebra} may have nontrivial kernel even for $a \neq 0$, since it is possible that $a = \til{a}$.
\end{remark}

Lemma~\ref{lemma:quaternionic-subalgebra} immediately implies a well-known statement about strong alternativity of the element $\til{e}_0$, see~\cite[Lemma~1.2]{eakin}.

\begin{corollary} \label{corollary:e_0-strongly-alternative}
The element $\til{e}_0$ is strongly alternative in $\A_n$.
\end{corollary}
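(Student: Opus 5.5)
We need to show that $[\til{e}_0,\til{e}_0,x]=0$ and $[x,x,\til{e}_0]=0$ for every $x\in\A_n$. The plan is to reduce everything to the quaternionic subalgebra $\mathbb{H}_a$ of Lemma~\ref{lemma:quaternionic-subalgebra}, which is associative.

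First, decompose an arbitrary $x \in \A_n$ as $x = \alpha e_0 + \beta \til{e}_0 + a$. Here $\alpha,\beta\in\mathbb{F}$ and $a \in \spn(e_0,\til{e}_0)^{\perp}$. Since $n(e_0)=1$ and $n(\til{e}_0)=-\gamma_{n-1}\neq 0$ with $e_0\perp\til{e}_0$, the plane $\spn(e_0,\til{e}_0)$ is nondegenerate, so this orthogonal decomposition exists. The component $a$ is doubly pure: $a\perp e_0$ gives $t(a)=0$, i.e.\ the first coordinate of $a$ has zero trace, and $a\perp\til{e}_0$ gives that the second coordinate has zero trace. Indeed, writing $a=(a_1,a_2)$, we have $\langle a,\til{e}_0\rangle=-\gamma_{n-1}\langle a_2,e_0\rangle$ by Proposition~\ref{proposition:real-cayley-dickson-properties}.

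Next, apply Lemma~\ref{lemma:quaternionic-subalgebra} to $a$. The space $\mathbb{H}_a=\spn(e_0,a,\til{e}_0,\til{a})$ is an associative subalgebra, being a homomorphic image of the associative algebra $\A_2\{-n(a),\gamma_{n-1}\}$. It contains $e_0$, $\til{e}_0$ and $a$, hence also $x$. All products appearing in the associators $[\til{e}_0,\til{e}_0,x]$ and $[x,x,\til{e}_0]$ are computed inside $\mathbb{H}_a$. Associativity of $\mathbb{H}_a$ then gives that both associators vanish.

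The only point requiring care is that Lemma~\ref{lemma:quaternionic-subalgebra} is used even when $n(a)=0$. There $\phi_a$ is merely surjective, but surjectivity suffices, since a homomorphic image of an associative algebra is associative. The case $a=0$ is trivial as well, because then $x\in\spn(e_0,\til{e}_0)$, which is associative. So there is no real obstacle; the main step is verifying that the orthogonal complement component is doubly pure.
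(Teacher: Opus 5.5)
Your proof is correct and is essentially the paper's argument: project $x$ onto $\spn(e_0,\til{e}_0)^{\perp}$, observe that $x$ then lies in the associative subalgebra $\mathbb{H}_a$ of Lemma~\ref{lemma:quaternionic-subalgebra}, and conclude that both associators vanish. You also check explicitly that the projection is doubly pure and that surjectivity of $\phi_a$ suffices when $n(a)=0$; the paper leaves both points implicit.
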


\begin{proof}
Let $a \in \A_n$, and $a'$ be the orthogonal projection of $a$ onto $\spn(e_0, \til{e}_0)^{\perp}$. By Lemma~\ref{lemma:quaternionic-subalgebra}, $a'$ and $\til{e}_0$ generate an associative subalgebra $\mathbb{H}_{a'} \subset \A_n$. Clearly, $a \in \mathbb{H}_{a'}$, so $[a,a,\til{e}_0] = [\til{e}_0,\til{e}_0,a] = 0$.
\end{proof}

\begin{theorem} \label{theorem:nonspecial-quaternionic-subalgebra}
Let $a,b \in \A_n$ alternate strongly, $t(a) = t(b) = 0$. Then $\mathbb{H}_{a,b} = \spn(e_0, a, b, ab)$ is an associative subalgebra in $\A_n$ closed under involution, and its elements satisfy the multiplication table~\ref{table:nonspecial-quaternionic-subalgebra}, where $\mu_1 = n(a)$, $\mu_2 = n(b)$, and $k = - 2\langle a, b \rangle$. In the case when $k = 0$, there exists a surjective homomorphism $\psi_{a,b}: \A_2 \{ -n(a), -n(b) \} \to \mathbb{H}_{a,b}$. If, moreover, $n(a) \neq 0$ and $n(b) \neq 0$, then $\psi_{a,b}$ is an isomorphism.
\end{theorem}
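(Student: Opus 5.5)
The plan is to compute directly the products of the four spanning elements $e_0, a, b, ab$, using only flexibility and the two strong alternativity relations $[a,a,b]=0$, $[b,b,a]=0$. This will produce the multiplication table~\ref{table:nonspecial-quaternionic-subalgebra}. Associativity of $\mathbb{H}_{a,b}$ will then be read off from the table, and the homomorphism $\psi_{a,b}$ will be defined exactly as $\phi_a$ was in Lemma~\ref{lemma:quaternionic-subalgebra}.

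\textbf{The basic relations.} Since $a,b$ are pure, $a^2=-\mu_1$ and $b^2=-\mu_2$. By Proposition~\ref{proposition:lambda-form}, $ab+ba = -(a\bar b+b\bar a) = -2\langle a,b\rangle = k$, so $ba = k-ab$; in particular $ba\in\mathbb{H}_{a,b}$. Next I compute the products involving $ab$:
\begin{itemize}
\item $a(ab)=a^2b=-\mu_1 b$, by $[a,a,b]=0$;
\item $(ab)b=ab^2=-\mu_2 a$, by $[b,b,a]=0$ together with flexibility, which gives $[a,b,b]=-[b,b,a]=0$;
\item $(ab)a = a(ba)$ by flexibility, and this equals $a(k-ab) = ka+\mu_1 b$;
\item $b(ab)=(ba)b$, since flexibility gives $[b,a,b]=0$, and this equals $(k-ab)b = kb+\mu_2 a$;
\item $(ab)(ab)$: I use $\overline{ab}=\bar b\bar a = ba = k-ab$, so $t(ab)=k$. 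Then $(ab)^2 = t(ab)\,ab - n(ab) = k\,ab-\mu_1\mu_2$, where $n(ab)=\mu_1\mu_2$ comes from Lemma~\ref{lemma:alternative-elements-are-normed}.
\end{itemize}
So every product of spanning elements lies in $\mathbb{H}_{a,b}$, which is therefore a subalgebra. The involution fixes $e_0$, negates $a$ and $b$, and sends $ab$ to $k-ab$, so $\mathbb{H}_{a,b}$ is closed under involution.

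\textbf{Associativity.} This is the main obstacle, because the table alone does not certify it when $k\neq 0$ or when the spanning elements are dependent. I would handle it by showing that the structure constants are exactly those of an abstract associative algebra, and then transferring. Define a 4-dimensional algebra $Q$ with basis $1,u,v,w$ and the same table. Since $k^2+4\mu_1\mu_2$ may be arbitrary, the cleanest route is to set $v' = v - \tfrac{k}{2\mu_1}u$ when $\mu_1\neq0$, which reduces $Q$ to $\A_2\{-\mu_1,-\mu_2'\}$. Division by $\mu_1$ is not always available, so instead I would check associativity directly. By linearity it suffices to verify $[x,y,z]=0$ for $x,y,z\in\{u,v,w\}$, and each case is a short table computation; for example $(uv)u = u(vu)$ by the table entries above. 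The map $Q\to\mathbb{H}_{a,b}$ sending the basis to $e_0,a,b,ab$ is a surjective homomorphism because the tables agree. Hence associativity of $Q$ implies associativity of $\mathbb{H}_{a,b}$, since associators in the image are images of associators in $Q$.

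\textbf{The case $k=0$.} Here $ba=-ab$, and the table becomes exactly Table~\ref{table:quaternionic-subalgebra} with $\til e_0, \til a$ replaced by $b, ab$, that is, the table of $\A_2\{-\mu_1,-\mu_2\}$. So $\psi_{a,b}$, defined by $e_1\mapsto a$, $e_2\mapsto b$, $e_3\mapsto ab$, is a surjective homomorphism, and associativity is inherited directly from $\A_2$. If moreover $\mu_1,\mu_2\neq0$, then the four elements $e_0,a,b,ab$ are pairwise $\langle\cdot,\cdot\rangle$-orthogonal: for instance $\langle ab,a\rangle=\langle b,\bar a a\rangle = \mu_1\langle b,e_0\rangle = 0$ by Lemma~\ref{lemma:inner-product-movement}. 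Their norms are $1,\mu_1,\mu_2,\mu_1\mu_2$, all nonzero, so the four elements are linearly independent and $\psi_{a,b}$ is an isomorphism.
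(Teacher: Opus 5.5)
Your proof is correct. The table computation and the closure under involution agree with the paper's; you route $(ab)a$ and $b(ab)$ through flexibility, while the paper writes $ab=k-ba$, which makes no difference. The genuine difference is how associativity is obtained.

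The paper never checks an associator. Instead it inherits associativity from a suitable $\A_2$ in three cases:
\begin{itemize}
\item For $k=0$ it uses $\psi_{a,b}$ and $\A_2\{-\mu_1,-\mu_2\}$.
\item For $k\neq 0$ with, say, $n(a)\neq 0$, it replaces $b$ by $b'=b-qa$, $q=\langle a,b\rangle/n(a)$. It checks that $a,b'$ still alternate strongly and span the same $\mathbb{H}_{a,b}$, which reduces to $k=0$.
\item For $n(a)=n(b)=0$, $k\neq 0$, it takes $x=e_1+e_2$ and $y=-\frac{k}{2}(e_1+e_3)$ in the split quaternions $\A_2\{-1,1\}$. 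These satisfy the same table, so $\mathbb{H}_{a,b}$ is a homomorphic image of an associative algebra.
\end{itemize}

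You instead prove once, uniformly in $\mu_1,\mu_2,k$, that the abstract algebra $Q$ defined by Table~\ref{table:nonspecial-quaternionic-subalgebra} is associative. You then transfer this along the surjection $Q\to\mathbb{H}_{a,b}$, which works even when $e_0,a,b,ab$ are dependent. You assert the associator check rather than carry it out, but it does go through: all $27$ associators on $\{u,v,w\}^3$ vanish. More conceptually, $Q$ is the Clifford algebra of the binary form $-\mu_1s^2+kst-\mu_2t^2$ with basis $1,u,v,uv$.

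Your route avoids the Gram--Schmidt step and the search for a model in the totally isotropic case. The paper's route gets associativity for free from $\A_2$, and shows along the way that $\mathbb{H}_{a,b}$ is always a homomorphic image of some $\A_2$.

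There is a small slip in the side remark you discard. To make $u$ and $v'$ anticommute you need $v'=v+\frac{k}{2\mu_1}u$, not $v-\frac{k}{2\mu_1}u$. Also, division is unavailable only when $\mu_1=\mu_2=0$, which is exactly the case the paper handles with the explicit model in $\A_2\{-1,1\}$.
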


\begin{proof}
Since $a$ and $b$ are pure, we have $a^2 = -n(a) = -\mu_1$ and $b^2 = -n(b) = -\mu_2$. It follows from $k = -2\langle a, b \rangle = -t(a\bar{b}) = t(ab)$ that $ba = \bar{b}\bar{a} = \overline{ab} = k - ab$.

The elements $a$ and $b$ alternate strongly, so $a(ab) = a^2b = -\mu_1 b$, $b(ab) = b(k - ba) = kb - b^2a = kb + \mu_2 a$, $(ab)a = (k-ba)a = ka - ba^2 = ka + \mu_1 b$, $(ab)b = ab^2 = -\mu_2 a$. Finally, Lemma~\ref{lemma:alternative-elements-are-normed} implies that $n(ab) = n(a)n(b) = \mu_1 \mu_2$, and thus $(ab)^2 = (ab)(k - \overline{ab}) = kab - n(ab) = kab - \mu_1 \mu_2$. Therefore, we have the following multiplication table in $\mathbb{H}_{a,b}$:
\begin{table}[H]
\centering
$
\begin{array}{|c|cccc|}
\hline
\times & e_0 & a        & b        & ab           \\\hline
e_0    & e_0 & a        & b        & ab           \\
a      & a   & -\mu_1   & ab       & -\mu_1 b     \\
b      & b   & k - ab      & -\mu_2   & kb + \mu_2 a      \\
ab     & ab  & ka + \mu_1 b  & -\mu_2 a & kab -\mu_1 \mu_2 \\\hline
\end{array}
$
\caption{\label{table:nonspecial-quaternionic-subalgebra} Multiplication table in $\mathbb{H}_{a,b}$.}
\end{table}

If $k = 0$, that is, $a \perp b$, then we may define $\psi_{a,b}: \A_2 \{ -\mu_1, -\mu_2 \} \to \mathbb{H}_{a,b}$ by $\psi_{a,b}(e_0) = e_0$, $\psi_{a,b}(e_1) = a$, $\psi_{a,b}(e_2) = b$, $\psi_{a,b}(e_3) = ab$. Then associativity of $\mathbb{H}_{a,b}$ follows from associativity of $\A_2 \{ -\mu_1, -\mu_2 \}$, and the rest of the proof is similar to that of Lemma~\ref{lemma:quaternionic-subalgebra}.

We now assume that $k \neq 0$. We first consider the case when $n(a) \neq 0$ or $n(b) \neq 0$. We may assume without loss of generality that $n(a) \neq 0$. Let $b' = b - q a$, where $q = \frac{\langle a, b \rangle}{n(a)}$. Then $a \perp b'$ and $\mathbb{H}_{a,b} = \spn(e_0, a, b, ab) = \spn(e_0, a, b', ab') = \mathbb{H}_{a,b'}$. Moreover, $[a,a,b'] = [a,a,b] - q [a,a,a] = 0$ and $[b',b',a] = [b,b,a] - q [a,b,a] - q [b,a,a] + q^2 [a,a,a] = 0$, that is, $a$ and $b'$ alternate strongly. Consequently, $\mathbb{H}_{a,b'}$ is an associative subalgebra in~$\A_n$ closed under involution, as desired.

Let now $n(a) = n(b) = 0$. Consider the elements $x = e_1 + e_2$, $y = -\frac{k}{2}(e_1 + e_3)$, $xy = \frac{k}{2}(e_0 + e_1 + e_2 + e_3)$ in $\A_2\{ -1, 1\}$. Then $e_0, x, y, xy$ are linearly independent in $\A_2\{ -1, 1\}$, and their products satisfy the same relations as the products of $e_0, a, b, ab$, since $\A_2\{ -1, 1\}$ is associative, $n(x) = n(y) = 0$ and $t(xy) = k$. Hence we may define a homomorphism $\theta_{a,b}: \A_2\{ -1, 1\} \to \mathbb{H}_{a,b}$ by $\theta_{a,b}(e_0) = e_0$, $\theta_{a,b}(x) = a$, $\theta_{a,b}(y) = b$, $\theta_{a,b}(xy) = ab$, and then associativity of $\mathbb{H}_{a,b}$ follows from associativity of $\A_2 \{ -1, 1\}$.
\end{proof}

\begin{corollary} \label{corollary:quaternionic-subalgebra}
Let $a, b \in \A_n$ alternate strongly. Then the set $\spn(e_0, a, b, ab)$ is an associative subalgebra in $\A_n$ closed under involution.
\end{corollary}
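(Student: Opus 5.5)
The plan is to reduce to Theorem~\ref{theorem:nonspecial-quaternionic-subalgebra} by replacing $a$ and $b$ with their pure parts. Write $a = \alpha + a_0$ and $b = \beta + b_0$, where $\alpha = \frac{t(a)}{2}$, $\beta = \frac{t(b)}{2} \in \mathbb{F}$ and $a_0 = \Im(a)$, $b_0 = \Im(b)$ are pure. The span is unchanged by this substitution:
$$
ab = \alpha\beta + \alpha b_0 + \beta a_0 + a_0 b_0,
$$
so $\spn(e_0, a, b, ab) = \spn(e_0, a_0, b_0, a_0 b_0)$. Here we use that scalars lie in the center and associate with everything. Once the identity of spans is in hand, it suffices to know that $a_0$ and $b_0$ alternate strongly.

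For that step I would check that the associator is trilinear and vanishes whenever one of its arguments is a scalar multiple of $e_0$. Expanding gives
$$
[a,a,b] = [a_0, a_0, b_0] \quad \text{and} \quad [b,b,a] = [b_0, b_0, a_0].
$$
Hence the strong alternativity of $a$ and $b$ passes to $a_0$ and $b_0$.

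Theorem~\ref{theorem:nonspecial-quaternionic-subalgebra} then applies to the pure, strongly alternating pair $a_0, b_0$. It gives that $\spn(e_0, a_0, b_0, a_0 b_0)$ is an associative subalgebra of $\A_n$ closed under involution, and by the equality of spans above this proves the corollary. There is no real obstacle here; the only point needing care is verifying the equality of spans, which is immediate from the displayed expansion of $ab$.
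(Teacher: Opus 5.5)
Your proposal is correct and is essentially the paper's own proof: pass to $\Im(a)$ and $\Im(b)$, observe that the span is unchanged and that strong alternativity is preserved (associators vanish when one argument is a scalar), and then apply Theorem~\ref{theorem:nonspecial-quaternionic-subalgebra}. The only difference is that you spell out the two ``it is clear'' steps that the paper leaves implicit.
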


\begin{proof}
Let $a' = \Im(a)$, $b' = \Im(b)$. It is clear that $a'$ and $b'$ alternate strongly and $\spn(e_0, a, b, ab) = \spn(e_0, a', b', a'b')$. Then the desired statement follows immediately from Theorem~\ref{theorem:nonspecial-quaternionic-subalgebra}, applied to the elements $a'$ and $b'$.
\end{proof}

The next theorem is a generalization of~\cite[Theorem 5.1]{moreno_alternative}.

\begin{theorem} \label{theorem:octonionic-subalgebra}
Let $a,b \in \A_n$ be doubly pure, $b \perp \spn(a,\til{a})$. Let also $a$ alternate strongly with $b$. We denote $\mathbb{O}_{a,b} = \spn(e_0,a,b,ab,\til{e}_0,\til{a},\til{b},\til{ab})$. Then there exists a surjective homomorphism $\phi_{a,b}: \A_3 \{ -n(a), -n(b), \gamma_{n-1}\} \to \mathbb{O}_{a,b}$, so $\mathbb{O}_{a,b}$ is an alternative subalgebra in $\A_n$. If, moreover, $n(a) \neq 0$ or $n(b) \neq 0$, then $\phi_{a,b}$ is an isomorphism.
\end{theorem}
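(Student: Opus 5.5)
We follow the pattern of Lemma~\ref{lemma:quaternionic-subalgebra}: compute the full multiplication table of the eight spanning elements and match it with the table of $\A_3\{-\mu_1,-\mu_2,\gamma_{n-1}\}$, where $\mu_1=n(a)$ and $\mu_2=n(b)$. The map will be
\[
e_0\mapsto e_0,\quad e_1\mapsto a,\quad e_2\mapsto b,\quad e_3\mapsto ab,\quad e_4\mapsto \til{e}_0,\quad e_5\mapsto\til{a},\quad e_6\mapsto \til{b},\quad e_7\mapsto \til{ab}.
\]
In $\A_3$ the basis elements $e_4,\dots,e_7$ are exactly $\til{e}_j=e_j\til{e}_0$ for $j\le 3$. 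So it suffices to check two things: that $\spn(e_0,a,b,ab)$ is a homomorphic image of $\A_2\{-\mu_1,-\mu_2\}$ via $\psi_{a,b}$, and that multiplication by tildes behaves on the image exactly as prescribed by Corollary~\ref{corollary:alternative-subalgebras}.

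The first step is to handle the quaternionic part. Here $a\perp b$, so $k=0$, and $a,b$ alternate strongly and are pure. Theorem~\ref{theorem:nonspecial-quaternionic-subalgebra} then gives the surjection $\psi_{a,b}$ onto $\mathbb{H}_{a,b}$, with products as in Table~\ref{table:nonspecial-quaternionic-subalgebra} for $k=0$.

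The second step is to show that $ab$ is doubly pure and that every $x\in\{a,b,ab\}$ is orthogonal to every $\til{y}$ with $y\in\{e_0,a,b,ab\}$. The element $a$ is doubly pure by hypothesis, and $b$ is doubly pure with $b\perp a$ and $b\perp\til{a}$. For $ab$ we use Lemma~\ref{lemma:inner-product-movement}(2) to move factors:
\[
\langle ab,\til{e}_0\rangle=\langle b,\bar a\til{e}_0\rangle=-\langle b,\til{a}\rangle=0,
\]
so $ab\perp\spn(e_0,\til{e}_0)$. The relations $\langle ab,\til{a}\rangle$, $\langle ab,\til{b}\rangle$ and so on are treated the same way, using Lemma~\ref{lemma:tilde-properties}(2),(3). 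Once every element of $\{a,b,ab\}$ is known to be doubly pure, the products can be computed:
\begin{itemize}
\item Lemma~\ref{lemma:tilde-properties}(2) gives $\til{x}y=-\til{xy}$ for $x,y\in\{a,b,ab\}$.
\item Lemma~\ref{lemma:tilde-properties}(4) gives $\til{x}y=-\til{y}x$ when $x\perp y$, which yields $x\til{y}=\til{yx}$ after using flexibility/anticommutation.
\item Lemma~\ref{lemma:tilde-properties}(5) gives $\til{x}\til{y}=-\gamma_{n-1}yx$ when $\til{x}\perp y$.
\end{itemize}
These are precisely the relations of Corollary~\ref{corollary:alternative-subalgebras} for the pure basis elements. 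Substituting the Table~\ref{table:nonspecial-quaternionic-subalgebra} products then yields a full $8\times 8$ table that coincides with that of $\A_3\{-\mu_1,-\mu_2,\gamma_{n-1}\}$. Hence $\phi_{a,b}$ is a homomorphism, it is surjective by construction, and $\mathbb{O}_{a,b}$ is alternative as a homomorphic image of an alternative algebra.

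The main obstacle is the orthogonality bookkeeping for $ab$, in particular $\til{ab}\perp a,b$ and $ab\perp\til{a},\til{b}$. This has to be derived from $b\perp\spn(a,\til{a})$ alone, without assuming any alternativity of $\til{e}_0$ with $ab$. The plan is to reduce each of these inner products via Lemma~\ref{lemma:inner-product-movement}(2) to expressions such as $\langle b,\bar a\,\til{b}\rangle$. These are then evaluated using $\bar a\,\til{b}=-a\til{b}=-\til{ba}$ (Lemma~\ref{lemma:tilde-properties}(2) applied after (4)), and finally Lemma~\ref{lemma:tilde-properties}(3) gives $\til{z}\perp z$ where needed.

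For injectivity, we use that $e_0,a,b,ab,\til{e}_0,\til{a},\til{b},\til{ab}$ are pairwise orthogonal with norms
\[
1,\ \mu_1,\ \mu_2,\ \mu_1\mu_2,\ -\gamma_{n-1},\ -\gamma_{n-1}\mu_1,\ -\gamma_{n-1}\mu_2,\ -\gamma_{n-1}\mu_1\mu_2,
\]
where the norms come from Lemma~\ref{lemma:alternative-elements-are-normed}. If $\mu_1\ne0$ or $\mu_2\ne0$, this alone gives only partial nondegeneracy, so an extra argument is needed. The kernel of $\phi_{a,b}$ is an ideal of $\A_3\{-\mu_1,-\mu_2,\gamma_{n-1}\}$. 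Since its image contains $e_0$, the kernel is proper. Taking $\mu_1\ne0$, the image contains $\mathbb{H}_a$, which is $4$-dimensional by Lemma~\ref{lemma:quaternionic-subalgebra}. It therefore remains to show that a proper ideal of $\A_3$ meeting $\phi_a^{-1}$-preimages trivially must vanish. This follows by multiplying a kernel element on the left by $\til{e}_0$, $a$ and $\til{a}$, which moves each of its components into $\mathbb{H}_a$, where $\phi_{a,b}$ is injective.
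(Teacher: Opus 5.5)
Your construction of $\phi_{a,b}$, the orthogonality bookkeeping via Lemma~\ref{lemma:inner-product-movement}(2), and the assembly of the $8\times 8$ table from Theorem~\ref{theorem:nonspecial-quaternionic-subalgebra} and Lemma~\ref{lemma:tilde-properties} follow the paper's proof. So the surjective homomorphism and the alternativity of $\mathbb{O}_{a,b}$ are fine. One orthogonality relation is easier by a different route than $\til{z}\perp z$, namely $\langle ab,\til{b}\rangle=\langle a,\til{b}\,\bar{b}\rangle=-\mu_2\langle a,\til{e}_0\rangle=0$.

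The genuine gap is your injectivity paragraph. In the domain $\A_3\{-\mu_1,-\mu_2,\gamma_{n-1}\}$ put $Q=\spn(e_0,e_1,e_4,e_5)$ and $J=\spn(e_2,e_3,e_6,e_7)$. Left multiplication by $e_1,e_4,e_5$ (the preimages of $a,\til{e}_0,\til{a}$) maps $J$ into $J$. So it never moves a component of a kernel element into $Q$. Moreover, by Table~\ref{table:octonionic-subalgebra}, every product of two elements of $J$ is a multiple of $\mu_2$, and $QJ,\,JQ\subseteq J$. Hence for $\mu_2=0$ the space $J$ is a nonzero ideal with $J\cap Q=0$, which refutes the claim you wanted to use.

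No repair is possible, because the ``or'' version of the statement is false. Already $b=0$ satisfies every hypothesis with $n(a)\neq 0$, and then $\mathbb{O}_{a,0}=\mathbb{H}_a$ is $4$-dimensional. For a nondegenerate example, take $\A_3\{\gamma_0,\gamma_1,1\}$, $a=e_1=(e_1,0)$ and $b=e_2+e_6=(e_2,e_2)$. Then:
\begin{itemize}
\item both elements are doubly pure;
\item $b\perp\spn(a,\til{a})=\spn(e_1,e_5)$;
\item $\A_3$ is alternative, so $a$ alternates strongly with $b$;
\item $n(a)=-\gamma_0\neq 0$ and $n(b)=0$.
\end{itemize}
However, $\til{b}=(\gamma_2e_2,e_2)=b$ and $\til{ab}=-ab=-(e_3-e_7)$, so $\dim\mathbb{O}_{a,b}=6<8$.

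What can actually be proved is injectivity when $n(a)\neq0$ \emph{and} $n(b)\neq0$. This is also all the paper's proof establishes, since it merely refers back to the orthogonality argument of Lemma~\ref{lemma:quaternionic-subalgebra}. In that case the eight pairwise orthogonal elements have the nonzero norms you listed, hence are linearly independent. You correctly noticed that orthogonality gives only partial nondegeneracy in the ``or'' case. The right conclusion from that observation is that the hypothesis must be strengthened to ``and'', as in Theorem~\ref{theorem:nonspecial-quaternionic-subalgebra}, not that some extra argument would close the gap.
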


\begin{proof}
We denote $\mu_1 = n(a)$, $\mu_2 = n(b)$, $\mu_3 = n(\til{e}_0) = -\gamma_{n-1}$. Since $a, b$ and $\til{e}_0$ are pure, we have $a^2 = -n(a) = -\mu_1$, $b^2 = -n(b) = -\mu_2$, and $(\til{e}_0)^2 = -n(\til{e}_0) = -\mu_3$. We may use Lemma~\ref{lemma:inner-product-movement}(2) to show that $a \perp \spn(e_0,\til{e}_0)$ and $b \perp \spn(e_0,a,\til{e}_0,\til{a})$ imply that $\{ e_0,a,b,ab,\til{e}_0,\til{a},\til{b},\til{ab} \}$ is an orthogonal system with respect to $\langle \cdot, \cdot \rangle$. Then, by Lemma~\ref{lemma:A_n-anticomm}, $a,b,ab,\til{e}_0,\til{a},\til{b},\til{ab}$ anticommute pairwise. Note that $ab$ is also doubly pure.

By Theorem~\ref{theorem:nonspecial-quaternionic-subalgebra}, there exists a surjective homomorphism $\psi_{a,b}: \A_2 \{ -\mu_1, -\mu_2 \} \to \mathbb{H}_{a,b}$. We now extend it to $\phi_{a,b}: \A_3 \{ -\mu_1, -\mu_2, -\mu_3 \} \to \mathbb{O}_{a,b}$. We may apply Lemma~\ref{lemma:quaternionic-subalgebra} to $a,b$ and $ab$ independently. We then use Lemma~\ref{lemma:tilde-properties}(2) to obtain that $\til{a}b = -\til{ab}$, $\til{a}(ab) = -\til{a(ab)} = \mu_1 \til{b}$, $\til{b}a = -\til{ba} = \til{ab}$, $\til{b}(ab) = -\til{b(ab)} = -\mu_2 \til{a}$, $\til{ab} \cdot a = -\til{(ab)a} = -\mu_1\til{b}$, $\til{ab} \cdot b = -\til{(ab)b} = \mu_2\til{a}$. We use Lemma~\ref{lemma:tilde-properties}(5) to get that $\til{b}\til{a} = -\gamma_{n-1}ab = \mu_3 ab$, $\til{ab} \cdot \til{a} = -\gamma_{n-1}a(ab) = -\mu_1 \mu_3 b$, and $\til{ab} \cdot \til{b} = -\gamma_{n-1}b(ab) = \mu_2 \mu_3 a$. Therefore, we have the following multiplication table in $\mathbb{O}_{a,b}$:
\begin{table}[H]
\centering
$
\begin{array}{|c|cccccccc|}
\hline
\times          & e_0             & a                     & b                     & ab                          & \til{e}_0  & \til{a}          & \til{b}          & \til{ab}               \\\hline
e_0             & e_0             & a                     & b                     & ab                          & \til{e}_0  & \til{a}          & \til{b}          & \til{ab}               \\
a               & a               & -\mu_1                & ab                    & -\mu_1 b                    & \til{a}    & -\mu_1 \til{e}_0 & -\til{ab}        & \mu_1 \til{b}          \\
b               & b               & -ab                   & -\mu_2                & \mu_2 a                     & \til{b}    & \til{ab}         & -\mu_2 \til{e}_0 & -\mu_2 \til{a}         \\
ab              & ab              & \mu_1 b               & -\mu_2 a              & -\mu_1 \mu_2                & \til{ab}   & -\mu_1 \til{b}   & \mu_2 \til{a}    & -\mu_1 \mu_2 \til{e}_0 \\
\til{e}_0 & \til{e}_0 & -\til{a}        & -\til{b}        & -\til{ab}             & -\mu_3           & \mu_3 a                & \mu_3 b                & \mu_3 ab                     \\
\til{a}   & \til{a}   & \mu_1 \til{e}_0 & -\til{ab}       & \mu_1 \til{b}         & -\mu_3 a         & -\mu_1 \mu_3           & -\mu_3 ab              & \mu_1 \mu_3 b                \\
\til{b}   & \til{b}   & \til{ab}        & \mu_2 \til{e}_0 & -\mu_2 \til{a}        & -\mu_3 b         & \mu_3 ab               & -\mu_2 \mu_3           & -\mu_2 \mu_3 a               \\
\til{ab}  & \til{ab}  & -\mu_1 \til{b}  & \mu_2 \til{a}   & \mu_1 \mu_2 \til{e}_0 & -\mu_3 ab        & -\mu_1 \mu_3 b         & \mu_2 \mu_3 a          & -\mu_1 \mu_2 \mu_3           \\\hline
\end{array}
$
\caption{\label{table:octonionic-subalgebra} Multiplication table in $\mathbb{O}_{a,b}$.}
\end{table}
Hence we may define $\phi_{a,b}$ by $\phi_{a,b}((e_j,0)) = \psi_{a,b}(e_j)$ and $\phi_{a,b}((0,e_j)) = \til{\psi_{a,b}(e_j)}$ for all $0 \leq j \leq 3$. The rest of the proof is similar to that of Lemma~\ref{lemma:quaternionic-subalgebra}.
\end{proof}

\section{Zero divisors with conditions on alternativity of components} \label{section:doubly-alternative}

This section is devoted to studying zero divisors in arbitrary Cayley--Dickson algebras whose components satisfy some additional conditions on the norm and alternativity. We generalize and strengthen the results obtained in Section~3 of the author's paper~\cite{our_orthographs1} for the case of real Cayley--Dickson algebras.

\subsection{Hexagons in zero divisor graphs} \label{subsection:A_n-hexagons}

\begin{lemma} \label{lemma:A_n-next-pair}
Let $(a,b), (c,d) \in \A_{n+1}$, and the elements $c,d \in \A_n$ alternate (not strongly) with $a,b \in \A_n$. Assume also that $n(c) - \chi \gamma_n n(d) = \chi n(c) - \gamma_n n(d) = 0$ for some $\chi \in \mathbb{F}$. In this case
\begin{enumerate}[\rm (1)]
    \item if $(a,b)(c,d) = 0$, then $(c,d)(\overline{ac},-\chi da)=0$;
    \item if $(c,d)(a,b) = 0$, then $(\overline{ca},-\chi d\bar{a})(c,d)=0$.
\end{enumerate}
\end{lemma}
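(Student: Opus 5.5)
The plan is a direct computation from the Cayley--Dickson multiplication rule of Definition~\ref{definition:cayley-dickson-algebras}. In each part I will expand the hypothesis into two component equations in $\A_n$, expand the product to be shown zero, and then simplify using only the alternation hypotheses together with the two scalar relations.

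The first step is to record which identities alternation supplies, since the hypothesis is only the one-sided condition $[c,c,x]=[d,d,x]=0$ for $x\in\{a,b\}$. Writing $\bar c=t(c)-c$ with $t(c)\in\mathbb{F}$, the condition $[c,c,x]=0$ gives $c(\bar c x)=(t(c)c-c^2)x=n(c)x$. The same holds with $x$ replaced by $\bar x=t(x)-x$, because associators are linear and vanish when one argument is a scalar. The conjugation identity $\overline{[x,y,z]}=-[\bar z,\bar y,\bar x]$ follows from $\overline{uv}=\bar v\bar u$. Applied to $[c,c,x]=0$ it gives $[\bar x,\bar c,\bar c]=0$, and hence $[\bar x, c,c]=0$. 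This yields the right-hand versions $(x\bar c)c=n(c)x$ and $(xc)\bar c=n(c)x$. The analogous identities hold for $d$. I expect this bookkeeping, namely checking that every simplification needs only alternation with $a$, $\bar a$, $b$ or $\bar b$ on the appropriate side, to be the only delicate point.

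For (1), the hypothesis $(a,b)(c,d)=0$ means $ac=-\gamma_n\bar d b$ and $da=-b\bar c$. By the multiplication rule,
\begin{align*}
(c,d)(\overline{ac},-\chi da)=\bigl(c(\bar c\bar a)-\chi\gamma_n(\bar a\bar d)d,\; d(ac)-\chi(da)c\bigr).
\end{align*}
The first component simplifies to $(n(c)-\chi\gamma_n n(d))\bar a$, which vanishes by the first scalar relation. For the second component, substitute the hypothesis to get $d(ac)=-\gamma_n d(\bar d b)=-\gamma_n n(d)b$ and $(da)c=-(b\bar c)c=-n(c)b$. The second component is therefore $(\chi n(c)-\gamma_n n(d))b=0$ by the second scalar relation.

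For (2), the hypothesis $(c,d)(a,b)=0$ gives $ca=-\gamma_n\bar b d$ and $bc=-d\bar a$. The product $(\overline{ca},-\chi d\bar a)(c,d)$ has first component $(\bar a\bar c)c-\chi\gamma_n\bar d(d\bar a)=(n(c)-\chi\gamma_n n(d))\bar a=0$. Its second component is $d(\bar a\bar c)-\chi(d\bar a)\bar c$. Here I use $\overline{ca}=-\gamma_n\bar d b$ to get $d(\bar a\bar c)=-\gamma_n n(d)b$, and $d\bar a=-bc$ to get $(d\bar a)\bar c=-(bc)\bar c=-n(c)b$. The second component is again $(\chi n(c)-\gamma_n n(d))b=0$, which completes the argument.
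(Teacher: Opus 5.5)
Your proposal is correct and follows the same route as the paper: expand both products with the Cayley--Dickson rule, substitute the component equations of the hypothesis, and collapse each component to $(n(c)-\chi\gamma_n n(d))\bar a$ and $(\chi n(c)-\gamma_n n(d))b$ using alternation. The only difference is cosmetic. You explicitly justify the right-sided identities such as $(x\bar c)c=n(c)x$ via the conjugation identity $\overline{[x,y,z]}=-[\bar z,\bar y,\bar x]$. The paper uses these identities tacitly; they also follow at once from flexibility, $[x,c,c]=-[c,c,x]$.
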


\begin{proof}
\leavevmode
\begin{enumerate}[\rm (1)]
\item We have the following chain of equalities
\begin{align*}
    (c,d)(\overline{ac},-\chi da) &= \left( c(\overline{ac}) + \gamma_n (\overline{-\chi da})d, (-\chi da)c + d(ac) \right) =\\
    &= \left(c(\bar{c}\bar{a}) - \chi \gamma_n (\bar{a}\bar{d})d, \chi (b\bar{c})c - \gamma_n d(\bar{d}b) \right) =\\
    &= \left((c\bar{c})\bar{a} - \chi \gamma_n \bar{a}(\bar{d}d), \chi b(\bar{c}c) - \gamma_n (d\bar{d})b \right) =\\
    &= \left((n(c) - \chi \gamma_n n(d)) \bar{a}, (\chi n(c) - \gamma_n n(d)) b \right) = 0.
\end{align*}
\item Similarly,
\begin{align*}
    (\overline{ca},-\chi d\bar{a})(c,d) &= \left( (\overline{ca})c + \gamma_n \bar{d}(-\chi d\bar{a}), d(\overline{ca}) + (-\chi d\bar{a})\bar{c} \right) =\\
    &= \left((\bar{a}\bar{c})c - \chi \gamma_n \bar{d}(d\bar{a}), d(\overline{-\gamma_n\bar{b}d}) + \chi (bc)\bar{c} \right) =\\
    &= \left(\bar{a}(\bar{c}c) - \chi \gamma_n (\bar{d}d)\bar{a}, -\gamma_n (d\bar{d})b + \chi b(c\bar{c}) \right) =\\
    &= \left((n(c) - \chi \gamma_n n(d)) \bar{a}, (\chi n(c) - \gamma_n n(d)) b \right) = 0. \tag*{\qedhere}
\end{align*}
\end{enumerate}
\end{proof}

\begin{remark} \label{remark:norm-condition}
If $n(c) = n(d) = 0$ in Lemma~\ref{lemma:A_n-next-pair}, then we can take any $\chi \in \mathbb{F}$. Otherwise, we obtain immediately 
\begin{equation*}
\begin{cases}
n(c) = \pm \gamma_n n(d) \neq 0;\\
\chi = \dfrac{n(c)}{\gamma_n n(d)} = \dfrac{\gamma_n n(d)}{n(c)} = \pm 1.
\end{cases}
\tag{\textasteriskcentered} \label{equation:norm-condition}
\end{equation*}
\end{remark}

Condition~\eqref{equation:norm-condition} is satisfied automatically if $\A_{n+1}$ is a real algebra of the main sequence, see~\cite[pp.~25--27]{moreno}, or if $\A_{n+1}$ is a real Cayley--Dickson split-algebra, see~\cite[Lemma~4.1]{our_split-algebras}. One can verify that the proofs given there require only the fact that $c,d \in \A_n$ alternate (not strongly) with $a,b \in \A_n$, and then $n(c) = n(d)$. Hence the values of $\chi$ are equal to $-1$ and $1$, respectively. It follows from Lemma~\ref{lemma:condition-asterisk} that condition~\eqref{equation:norm-condition} is also satisfied if $\A_n$ is a Cayley--Dickson algebra with anisotropic norm over an arbitrary field $\mathbb{F}$, $\chrs \mathbb{F} \neq 2$. However, condition~\eqref{equation:norm-condition} is not true in general, see~\cite[Example~4.17]{our_split-algebras} and Example~\ref{example:non-split-algebra-zero-divisors} below.

\begin{notation}
Let $(a,b) \in \A_{n+1}$ and $n(a) \neq 0$. Then the value $\chi((a,b)) = \dfrac{\gamma_n n(b)}{n(a)}$ is called the {\em characteristic} of $(a,b)$.
\end{notation}

\begin{remark}
Note that we could choose another definition of characteristic and consider the inverse of $\chi((a,b))$, i.e., $\frac{n(a)}{\gamma_n n(b)}$. Then the condition that $n(a) \neq 0$ would be replaced by $n(b) \neq 0$. Most of the results of this section can be easily transferred to the definition of characteristic modified in this way. In particular, in this case the element~$c$ would be expressed through~$d$ in Lemma~\ref{lemma:double-alternative-annihilators}.
\end{remark}

\begin{proposition} \label{proposition:self-orthogonality}
If $(x,y) \in \A_{n+1}$ is pure and $\chi((x,y)) = 1$, then $(x,y)$ is (strongly) orthogonal to itself.
\end{proposition}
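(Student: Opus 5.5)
The plan is to check directly that $(x,y)^2 = 0$. Being orthogonal to itself means $(x,y)(x,y) = 0$, since both one-sided products coincide. The word "strongly" then follows because every element alternates with itself: $[z,z,z]=0$ by flexibility, so $(x,y)$ alternates strongly with $(x,y)$.

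First, observe that $\chi((x,y))$ is only defined when $n(x)\neq 0$. The hypothesis $\chi((x,y)) = 1$ means $\gamma_n n(y) = n(x)$. By Proposition~\ref{proposition:real-cayley-dickson-properties} this gives
\[
n((x,y)) = n(x) - \gamma_n n(y) = 0 .
\]

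Next, use purity. For a pure element $z$ we have $\bar z = -z$, hence $z^2 = -z\bar z = -n(z)$. Applying this to $z=(x,y)$, whose trace $t((x,y)) = t(x)$ vanishes by assumption, gives
\[
(x,y)^2 = -n((x,y)) = 0 .
\]
This is all that is needed.

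As a sanity check against the multiplication rule, $(x,y)(x,y) = (x^2 + \gamma_n \bar y y,\; yx + y\bar x)$. The second component is $y(x+\bar x) = t(x)y = 0$. The first is $-n(x) + \gamma_n n(y) = 0$. Note that this computation needs only $t(x)=0$, not $t(y)=0$, which is consistent with the paper's meaning of "pure".

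There is essentially no obstacle. The only point needing care is the interpretation: "pure" should be read as $t((x,y))=0$, and the nonvanishing of $n(x)$ is implicit in $\chi$ being defined. The argument should also be noted as an instance of Lemma~\ref{lemma:A_n-next-pair} with $\chi=1$.
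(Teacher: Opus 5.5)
Your proof is correct and is essentially the paper's own argument: $\chi((x,y))=1$ gives $n((x,y)) = n(x)-\gamma_n n(y) = 0$, and purity then yields $(x,y)(x,y) = -(x,y)\overline{(x,y)} = -n((x,y)) = 0$. Your side remarks are not needed and do not affect the argument. The parenthetical ``strongly'' most plausibly just stresses algebraic orthogonality $zz=0$, as opposed to mere self-orthogonality $n(z)=0$ with respect to $\langle\cdot,\cdot\rangle$, rather than referring to strong alternation. Also, the statement is not really an instance of the next-pair lemma.
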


\begin{proof}
By definition, $n((x,y)) = n(x) - \gamma_n n(y) = \gamma_n n(y) - \gamma_n n(y) = 0$, so $(x,y)(x,y) = -(x,y)\overline{(x,y)} = -n((x,y)) = 0$.
\end{proof}

\begin{lemma} \label{lemma:condition-asterisk}
Let the elements $c,d \in \A_n$ alternate with $a,b \in \A_n$, and let $(a,b)(c,d) = 0$ or $(c,d)(a,b) = 0$ in $\A_{n+1}$. Assume that $n(a) \neq 0$ or $n(b) \neq 0$, and also that $n(c) \neq 0$ or $n(d) \neq 0$. Then $\chi = \chi((a,b)) = \chi((c,d)) = \pm 1$ and, moreover, $\chi((\overline{ac},-\chi da)) = \chi((\overline{ca},-\chi d\bar{a})) = \chi$. In other words, the elements $(a,b)$, $(c,d)$, $(\overline{ac},-\chi da)$ and $(\overline{ca},-\chi d\bar{a})$ satisfy condition~\eqref{equation:norm-condition} with the same value of $\chi$.
\end{lemma}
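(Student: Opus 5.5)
The plan is to turn the zero-product condition into two scalar equations among $n(a),n(b),n(c),n(d)$ by taking norms, and then solve them.

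\textbf{Norm equations.} Suppose first that $(a,b)(c,d)=0$. By the multiplication rule in $\A_{n+1}$ this means
$$
ac = -\gamma_n \bar{d}b, \qquad da = -b\bar{c}.
$$
Since $c$ alternates with $a$, Lemma~\ref{lemma:alternative-elements-are-normed} gives $n(ac)=n(a)n(c)$. Since $d$ alternates with $b$, the same holds for $\bar d = t(d)-d$, because the associator is linear and $[e_0,\cdot,\cdot]=0$. Hence $n(\bar d b)=n(d)n(b)$, and likewise $n(da)=n(d)n(a)$ and $n(b\bar c)=n(b)n(c)$. Taking norms of both equations yields
$$
n(a)n(c)=\gamma_n^2 n(b)n(d), \qquad n(a)n(d)=n(b)n(c).
$$
The case $(c,d)(a,b)=0$ reads $ca=-\gamma_n\bar b d$ and $bc=-d\bar a$. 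The same lemma gives exactly the same two scalar equations, so from here on both cases are treated together.

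\textbf{Showing $n(a),n(c)\neq 0$ and $\chi=\pm1$.} Suppose $n(a)=0$. Then $n(b)\neq0$ by hypothesis, and the second equation forces $n(c)=0$, so $n(d)\neq 0$. The first equation then gives $0=\gamma_n^2 n(b)n(d)\neq 0$, a contradiction. Hence $n(a)\neq0$.

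Now the second equation gives $n(d)=n(b)n(c)/n(a)$. If $n(c)=0$, then also $n(d)=0$, contradicting the hypothesis on $(c,d)$; so $n(c)\neq0$. Substituting into the first equation and cancelling $n(c)$ gives $n(a)^2=\gamma_n^2 n(b)^2$, i.e. $\chi((a,b))=\gamma_n n(b)/n(a)=\pm1$. Finally,
$$
\chi((c,d))=\frac{\gamma_n n(d)}{n(c)}=\frac{\gamma_n n(b)}{n(a)}=\chi.
$$
Thus condition~\eqref{equation:norm-condition} holds, with $\chi=\chi((a,b))=\chi((c,d))=\pm1$.

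\textbf{The new elements.} Again using Lemma~\ref{lemma:alternative-elements-are-normed} and $\chi^2=1$, we get
$$
n(\overline{ac})=n(a)n(c)\neq0, \qquad n(-\chi da)=n(d)n(a).
$$
Therefore $\chi((\overline{ac},-\chi da))=\gamma_n n(d)n(a)/(n(a)n(c))=\chi$. The same computation applies to $(\overline{ca},-\chi d\bar a)$, using $n(\bar a)=n(a)$.

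\textbf{Main obstacle.} The only delicate point is justifying each norm multiplicativity step: the alternation hypothesis has to be applied in the correct direction, and it must be transferred to conjugates, which works because $\bar x = t(x)-x$. The rest is the elementary case analysis above.
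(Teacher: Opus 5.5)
Your proof is correct and follows essentially the same route as the paper: take norms of the two component equations via Lemma~\ref{lemma:alternative-elements-are-normed} to get $n(a)n(c)=\gamma_n^2 n(b)n(d)$ and $n(a)n(d)=n(b)n(c)$, solve for nonvanishing and $\chi=\pm1$, then compute the characteristic of the new pairs the same way. The differences are cosmetic. You solve the system by a short case analysis, whereas the paper multiplies the equations to get $(n(c))^2=(\gamma_n n(d))^2$. You also make explicit two points the paper leaves implicit: the transfer of alternation to conjugates via $\bar x=t(x)-x$, and the case $(c,d)(a,b)=0$.
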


\begin{proof}
We may assume without loss of generality that $(a,b)(c,d) = (ac + \gamma_n \bar{d}b, da + b\bar{c}) = 0$, since the case when $(c,d)(a,b) = 0$ is completely similar. By Lemma~\ref{lemma:alternative-elements-are-normed},
\begin{align*}
n(a)n(c) &= n(ac) = n(-\gamma_n \bar{d}b) = \gamma_n^2 n(\bar{d}b) = \gamma_n^2 n(b)n(\bar{d}) = \gamma_n^2 n(b)n(d),\\
n(a)n(d) &= n(da) = n(-b\bar{c}) = n(b\bar{c}) = n(b)n(\bar{c}) = n(b)n(c),\\
(n(c))^2n(a) &= n(c)(n(a)n(c)) = \gamma_n^2 n(c)(n(b)n(d)) = \gamma_n^2 n(d)(n(b)n(c)) =\\
&= \gamma_n^2 n(d)(n(a)n(d)) = (\gamma_n n(d))^2n(a),\\
(n(c))^2n(b) &= n(c)(n(b)n(c)) = n(c)(n(a)n(d)) = n(d)(n(a)n(c)) =\\
&= \gamma_n^2 n(d)(n(b)n(d)) = (\gamma_n n(d))^2n(b).
\end{align*}
It follows from $n(a) \neq 0$ or $n(b) \neq 0$ that $(n(c))^2 = (\gamma_n n(d))^2$, and thus $n(c) = \pm \gamma_n n(d) \neq 0$. Similarly, it follows from $n(c) \neq 0$ or $n(d) \neq 0$ that $n(a) = \pm \gamma_n n(b) \neq 0$. Hence $\chi = \chi((a,b)) = \gamma_n \dfrac{n(b)}{n(a)} = \gamma_n \dfrac{n(d)}{n(c)} = \chi((c,d)) = \pm 1$. Moreover, Lemma~\ref{lemma:alternative-elements-are-normed} implies that
$$
\chi((\overline{ac},-\chi da)) = \gamma_n \dfrac{n(-\chi da)}{n(\overline{ac})} = \gamma_n \dfrac{n(da)}{n(ac)} = \gamma_n \dfrac{n(a)n(d)}{n(a)n(c)} = \gamma_n \dfrac{n(d)}{n(c)} = \chi.
$$
It is shown similarly that $\chi((\overline{ca},-\chi d\bar{a})) = \chi$.
\end{proof}

In the statements~\ref{lemma:strongly-alternative-system}--\ref{description:A_n-double-hexagon} and in Figure~\ref{figure:directed-hexagon} we assume that $(a,b)(c,d) = 0$ in $\A_{n+1}$, and the elements $a,b \in \A_n$ alternate strongly with $c,d \in \A_n$, that is, $[x,x,y] = [y,y,x] = 0$ for $x \in \{ a, b \}$ and $y \in \{ c, d\}$. Everywhere, except for Lemma~\ref{lemma:strongly-alternative-system}, we also assume that $(a,b)$ and $(c,d)$ satisfy condition~\eqref{equation:norm-condition}.

\begin{lemma} \label{lemma:strongly-alternative-system}
The elements $ac,da$ alternate strongly with $a,b,c,d$.
\end{lemma}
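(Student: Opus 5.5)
The idea is to use the two expressions for each of $ac$ and $da$ that come from the relation $(a,b)(c,d)=0$, and to place each expression inside an associative subalgebra supplied by Corollary~\ref{corollary:quaternionic-subalgebra}. Any two elements of an associative subalgebra $H\subseteq\A_n$ trivially satisfy $[x,x,y]=[y,y,x]=0$, because the associator vanishes on $H$. So it suffices to show that $ac$ and $a$ lie in a common associative subalgebra, and likewise for each of the pairs $ac,b$; $ac,c$; $ac,d$; and then for the same four pairs with $da$ in place of $ac$.

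First, I would expand $(a,b)(c,d)=(ac+\gamma_n\bar d b,\ da+b\bar c)=0$. This gives
$$
ac=-\gamma_n\bar d b, \qquad da=-b\bar c.
$$
Next, since $a$ alternates strongly with $c$, Corollary~\ref{corollary:quaternionic-subalgebra} shows that $\spn(e_0,a,c,ac)$ is an associative subalgebra. It contains $a$, $c$ and $ac$, so $ac$ alternates strongly with $a$ and with $c$.

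Similarly, $b$ and $d$ alternate strongly, so $\spn(e_0,d,b,db)$ is an associative subalgebra closed under involution. It contains $\bar d=t(d)-d$, and hence it contains $\bar d b=t(d)b-db$. Therefore $ac=-\gamma_n\bar d b$ lies in this subalgebra together with $b$ and $d$, and $ac$ alternates strongly with $b$ and $d$.

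For $da$ the argument is the same with the roles exchanged. The subalgebra $\spn(e_0,d,a,da)$, which comes from the strong alternation of $d$ and $a$, contains $da$, $a$ and $d$. The subalgebra $\spn(e_0,b,c,bc)$ is closed under involution, so it contains $\bar c$ and hence $b\bar c=t(c)b-bc=-da$, together with $b$ and $c$. This handles all eight pairs.

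I do not expect a real obstacle here. The only point needing care is that elements such as $\bar d b$ and $b\bar c$ belong to the relevant quaternionic subalgebra. This uses closure under involution, or equivalently the regularity $\bar x=t(x)-x$, and it is exactly why the two-sided expressions from $(a,b)(c,d)=0$ are needed, rather than only the products $ac$ and $da$ themselves. Condition~\eqref{equation:norm-condition} is not used, in agreement with the convention stated before the lemma.
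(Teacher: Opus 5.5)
Your proof is correct and is essentially the paper's own argument. The paper also reads off $ac=-\gamma_n\bar d b$ and $da=-b\bar c$ from $(a,b)(c,d)=0$, and then applies Corollary~\ref{corollary:quaternionic-subalgebra} to the pairs $a,c$; $b,c$; $a,d$; $b,d$, additionally sketching a direct associator computation as an alternative.
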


\begin{proof}
It follows from $(a,b)(c,d) = (ac + \gamma_n \bar{d}b, da + b\bar{c}) = 0$ that $ac = -\gamma_n \bar{d}b$ and $da = -b\bar{c}$. It remains to apply Corollary~\ref{corollary:quaternionic-subalgebra} to the following pairs of elements: $a$ and $c$, $b$ and $c$, $a$ and $d$, $b$ and $d$.

We note that this statement can also be easily proved directly:
\begin{align*}
[a,a,ac] &= -[a,\bar{a},ac] = -(a\bar{a})(ac) + a(\bar{a}(ac)) =\\
&= -(a\bar{a})(ac) + a((\bar{a}a)c) = -n(a)ac + n(a)ac = 0,\\
[b,b,ac] &= [b,b,-\gamma_n \bar{d}b] = \gamma_n[\bar{d}b,b,b] = -\gamma_n[\bar{d}b,\bar{b},b] = 0.
\end{align*}
Similarly, all the elements $a,b,c,d$ alternate with $ac, ad$. Conversely,
\begin{align*}
    [ac,ac,a] &= -[ac, \overline{ac}, a] = -((ac)(\overline{ac}))a + (ac)((\bar{c}\bar{a})a) =\\
    &= -n(ac)a + (ac)(\bar{c}(\bar{a}a)) = -n(ac)a + n(a)(ac)\bar{c} =\\
    &= -n(ac)a + n(a)a(c\bar{c}) = -n(ac)a + n(a)n(c)a = 0,
\end{align*}
since it follows from Lemma~\ref{lemma:alternative-elements-are-normed} that $n(ac) = n(a)n(c)$. Thus it can be shown that $ac, ad$ alternate with $a,b,c,d$.
\end{proof}

\begin{corollary} \label{corollary:A_n-double-hexagon}
There exists the following $6$-cycle in $\Gamma_Z(\A_{n+1})$:
$$
(a,b) \rightarrow (c,d) \rightarrow (\overline{ac},-\chi da) \rightarrow (a,-b) \rightarrow (c,-d) \rightarrow (\overline{ac}, \chi da) \rightarrow (a,b).
$$
\end{corollary}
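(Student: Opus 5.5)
We establish the six directed edges one at a time. The edges come in three types, and each type is obtained from the previous edge by Lemma~\ref{lemma:A_n-next-pair} or by a sign symmetry.

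Write $(e,f) = (\overline{ac},-\chi da)$. The standing assumptions are that $(a,b)(c,d)=0$, that $a,b$ alternate strongly with $c,d$, and that condition~\eqref{equation:norm-condition} holds.

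\emph{Step 1.} The edge $(a,b)\to(c,d)$ is the hypothesis. Lemma~\ref{lemma:A_n-next-pair}(1) applies, since $c,d$ alternate with $a,b$ and $n(c)-\chi\gamma_n n(d)=\chi n(c)-\gamma_n n(d)=0$. It gives $(c,d)(e,f)=0$, which is the second edge.

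\emph{Step 2.} To get $(e,f)\to(a,-b)$, apply Lemma~\ref{lemma:A_n-next-pair}(1) again, now to the zero product $(c,d)(e,f)=0$ with $e,f$ in the role of the second factor. The hypotheses to check are the following.
\begin{itemize}
\item[(i)] $e=\overline{ac}$ and $f=-\chi da$ alternate with $c,d$. This follows from Lemma~\ref{lemma:strongly-alternative-system}, since alternation is unaffected by conjugation: $\bar{x}=t(x)-x$, and scalars do not affect associators.
\item[(ii)] The norm condition holds for $(e,f)$ with the same $\chi$. By Lemma~\ref{lemma:alternative-elements-are-normed}, $n(e)=n(a)n(c)$ and $n(f)=n(a)n(d)$, so $n(e)-\chi\gamma_n n(f)=n(a)\bigl(n(c)-\chi\gamma_n n(d)\bigr)=0$, and likewise for the second expression.
\end{itemize}
The lemma then yields $(e,f)\bigl(\overline{ce},-\chi fc\bigr)=0$. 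It remains to compute the new element. The quaternionic subalgebra $\spn(e_0,a,c,ac)$ of Corollary~\ref{corollary:quaternionic-subalgebra} is associative, so
\[
\overline{ce}=\overline{c(\bar c\bar a)}=\overline{n(c)\bar a}=n(c)a .
\]
Using $da=-b\bar c$ and the associativity of $\spn(e_0,b,c,bc)$,
\[
-\chi fc=\chi^2 (da)c=-(b\bar c)c=-n(c)b .
\]
So the new vertex is $n(c)(a,-b)$. If $n(c)\ne0$ this is the line $[(a,-b)]$. The case $n(c)=0$ is the delicate one: then $n(d)=0$ as well, and the resulting element is zero, so this route gives nothing. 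For that case I would verify $(e,f)(a,-b)=0$ directly from the multiplication rule, using $ac=-\gamma_n\bar d b$, $da=-b\bar c$ and the associative subalgebras above. Concretely, the first component is $\overline{ac}\,a+\gamma_n\bar b\chi da$, and the plan is to rewrite both terms in a common subalgebra so they cancel.

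\emph{Step 3.} The map $(x,y)\mapsto(x,-y)$ preserves zero products of pairs: from the formula $(a,b)(c,d)=(ac+\gamma_n\bar d b,\,da+b\bar c)$, changing the signs of both second components leaves the product of the form $(u,-v)$. Applying this to the three edges already obtained gives
\[
(a,-b)\to(c,-d)\to(\overline{ac},\chi da)\to(a,b),
\]
which closes the cycle.

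The main obstacle is the degenerate case $n(c)=n(d)=0$ in Step~2, together with checking that the six vertices are pairwise distinct lines. I would handle the first with the direct computation above. Distinctness I would argue from the sign patterns of the components, again with a separate argument when $n(c)=0$ or $b=0$.
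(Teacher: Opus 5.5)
Your proof is correct and is essentially the paper's. The paper also applies Lemma~\ref{lemma:A_n-next-pair} first to $(a,b)(c,d)=0$ and then to $(c,d)(\overline{ac},-\chi da)=0$, and it computes the output as $n(c)(a,-b)$ exactly as you do; for the last three edges it re-applies that lemma (dividing by $n(a)$ once more), where your sign flip $(x,y)\mapsto(x,-y)$ is a cleaner shortcut. Your ``delicate case'' $n(c)=0$ cannot occur, because condition~\eqref{equation:norm-condition} as defined includes $n(c)=\pm\gamma_n n(d)\neq 0$. The paper's proof also never checks that the six lines are pairwise distinct, and this can fail: for $a=b=c=d=e_1\in\A_1$ in $\A_2\{\gamma_0,1\}$ one has $(a,b)(c,d)=0$ with $\chi=1$ and $(c,d)=(a,b)$. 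So neither of your announced extra arguments is needed, and the ``sign pattern'' argument for distinctness could not work in general.
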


\begin{proof}
By Lemma~\ref{lemma:condition-asterisk}, $\chi = \chi((a,b)) = \chi((c,d)) = \chi((\overline{ac},-\chi da)) = \pm 1$. Besides, according to Lemma~\ref{lemma:strongly-alternative-system}, the elements $ac,da$ alternate strongly with $a,b,c,d$. We obtain this cycle by successive applying of Lemma~\ref{lemma:A_n-next-pair}:
\begin{itemize}
    \item $(a,b)(c,d)=0$ implies $(c,d)(\overline{ac},-\chi da)=0$;
    \item we have $\overline{c(\overline{ac})} = \overline{c(\bar{c}\bar{a})} = \overline{(c\bar{c})\bar{a}} = n(c)a$ and $-\chi (-\chi da)c = (da)c = (-b\bar{c})c = -b(\bar{c}c) = -n(c)b$, so $(c,d)(\overline{ac},-\chi da)=0$ and $n(c) \neq 0$ imply $(\overline{ac},-\chi da)(a,-b)=0$;
    \item we have $\overline{(\overline{ac})a} = \overline{(\bar{c}\bar{a})a} = \overline{\bar{c}(\bar{a}a)} = n(a)c$ and $-\chi (-b)(\overline{ac}) = \chi b (\overline{-\gamma_n\bar{d}b}) = -\chi \gamma_n b(\bar{b}d) = -\chi \gamma_n (b\bar{b})d = -\chi \gamma_n n(b)d = -n(a)d$, so $(\overline{ac},-\chi da)(a,-b)=0$ and $n(a) \neq 0$ imply $(a,-b)(c,-d)=0$;
    \item $(a,-b)(c,-d)=0$ implies $(c,-d)(\overline{ac},\chi da)=0$;
    \item $(c,-d)(\overline{ac},\chi da)=0$ implies $(\overline{ac},\chi da)(a,b)=0$. \qedhere
\end{itemize}
\end{proof}

\begin{proposition} \label{proposition:new-pairs}
Let $(x,y)(z,w) = 0$ in $\A_{n+1}$. Then $(\bar{x},\bar{y})(\gamma_n \bar{w},\bar{z}) = (\gamma_n \bar{y}, \bar{x})(\gamma_n w, z) = (\gamma_n y, x)(\bar{z}, \bar{w}) = 0$.
\end{proposition}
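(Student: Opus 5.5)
Expand $(x,y)(z,w)=0$ by the Cayley--Dickson product of Definition~\ref{definition:cayley-dickson-algebras}. This gives the two equations
\begin{equation*}
xz + \gamma_n \bar{w}y = 0, \qquad wx + y\bar{z} = 0. \tag{E}
\end{equation*}
Each of the three claimed products will be computed directly with the same formula. I will then show that every component reduces to $0$ using only (E), the anti-automorphism property $\overline{uv} = \bar{v}\bar{u}$ of the regular involution on $\A_n$, and $\bar{\bar{u}} = u$. No alternativity is needed, so the argument works for arbitrary $x,y,z,w \in \A_n$.

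\textbf{First product.} We have
$$(\bar{x},\bar{y})(\gamma_n\bar{w},\bar{z}) = \left(\gamma_n \bar{x}\bar{w} + \gamma_n z\bar{y},\ \bar{z}\bar{x} + \gamma_n \bar{y}w\right).$$
The first component equals $\gamma_n\,\overline{wx + y\bar{z}}$, which vanishes by the second equation of (E). The second component equals $\overline{xz + \gamma_n \bar{w}y}$, which vanishes by the first equation.

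\textbf{Second product.} We have
$$(\gamma_n\bar{y},\bar{x})(\gamma_n w,z) = \left(\gamma_n^2 \bar{y}w + \gamma_n \bar{z}\bar{x},\ \gamma_n z\bar{y} + \gamma_n \bar{x}\bar{w}\right).$$
The first component is $\gamma_n$ times the conjugate of the first equation of (E). The second component is $\gamma_n$ times the conjugate of the second equation.

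\textbf{Third product.} We have
$$(\gamma_n y,x)(\bar{z},\bar{w}) = \left(\gamma_n y\bar{z} + \gamma_n wx,\ \gamma_n \bar{w}y + xz\right).$$
These two components are exactly $\gamma_n$ times the second equation of (E) and the first equation of (E), respectively.

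The computation is routine. The only point needing care is the bookkeeping of conjugations in the second coordinate of the product: the formula involves $\bar{d}b$ and $b\bar{c}$, and here $d$ or $c$ may already be conjugated, so I will check each of these terms carefully. There is no conceptual obstacle.
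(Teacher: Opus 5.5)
Your proposal is correct and matches the paper's proof: expand each product with the Cayley--Dickson formula and recognize every component as $\gamma_n$ times (or the conjugate of) one of the two component equations of $(x,y)(z,w)=0$. Your component-by-component bookkeeping agrees with the paper's computation, and you are right that no alternativity assumptions are needed.
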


\begin{proof}
We have $(x,y)(z,w) = (xz + \gamma_n \bar{w}y, wx + y\bar{z}) = 0$. Hence
\begin{align*}
    (\bar{x},\bar{y})(\gamma_n \bar{w},\bar{z}) &= (\gamma_n \bar{x} \bar{w} + \gamma_n z \bar{y}, \bar{z} \bar{x} + \gamma_n \bar{y}w) = (\gamma_n (\overline{wx + y\bar{z}}), \overline{xz + \gamma_n \bar{w}y}) = 0,\\
    (\gamma_n \bar{y}, \bar{x})(\gamma_n w, z) &= (\gamma_n^2 \bar{y} w + \gamma_n \bar{z} \bar{x}, \gamma_n z \bar{y} + \gamma_n \bar{x} \bar{w}) = \gamma_n (\overline{xz + \gamma_n\bar{w} y}, \overline{wx + y\bar{z}}) = 0,\\
    (\gamma_n y, x)(\bar{z}, \bar{w}) &= (\gamma_n y \bar{z} + \gamma_n w x, \gamma_n \bar{w} y + xz) = 0. \tag*{\qedhere}
\end{align*}
\end{proof}

\begin{corollary} \label{corollary:additional-hexagons}
There exist the following $6$-cycles in $\Gamma_Z(\A_{n+1})$:
\begin{gather*}
(\bar{a},\bar{b}) \rightarrow (\gamma_n \bar{d},\bar{c}) \rightarrow (-\chi \gamma_n da,\overline{ac}) \rightarrow (\bar{a},-\bar{b}) \rightarrow (\gamma_n \bar{d},-\bar{c}) \rightarrow (\chi \gamma_n da,\overline{ac}) \rightarrow (\bar{a},\bar{b}),\\
(\gamma_n b,a) \rightarrow (\bar{c},\bar{d}) \rightarrow (-\chi \gamma_n \overline{da}, ac) \rightarrow (\gamma_n b,-a) \rightarrow (\bar{c},-\bar{d}) \rightarrow (\chi \gamma_n \overline{da}, ac) \rightarrow (\gamma_n b,a),\\
(\gamma_n \bar{b},\bar{a}) \rightarrow (\gamma_n d,c) \rightarrow (ac,-\chi \overline{da}) \rightarrow (\gamma_n \bar{b},-\bar{a}) \rightarrow (\gamma_n d,-c) \rightarrow (ac,\chi \overline{da}) \rightarrow (\gamma_n \bar{b},\bar{a}).
\end{gather*}
\end{corollary}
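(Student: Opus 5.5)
The plan is to apply Proposition~\ref{proposition:new-pairs} termwise to each directed edge of the hexagon from Corollary~\ref{corollary:A_n-double-hexagon}. That proposition gives three operations $T_1, T_2, T_3$ on an ordered pair $(x,y) \to (z,w)$ (meaning $(x,y)(z,w)=0$), each producing another zero product:
\begin{itemize}
\item $T_1$: $(\bar{x},\bar{y}) \to (\gamma_n \bar{w},\bar{z})$;
\item $T_2$: $(\gamma_n \bar{y},\bar{x}) \to (\gamma_n w,z)$;
\item $T_3$: $(\gamma_n y,x) \to (\bar{z},\bar{w})$.
\end{itemize}
Each operation acts differently on the left and right factors. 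So the first step is to check that, along the hexagon, the image of a vertex viewed as a right factor agrees, up to a nonzero scalar, with its image viewed as a left factor. Since vertices of $\Gamma_Z$ are lines, proportionality is enough.

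For $T_1$, a vertex $(z,w)$ maps to $(\gamma_n\bar{w},\bar{z})$ as a right factor, but to $(\bar{z},\bar{w})$ as a left factor. These do not match directly. I would fix this by applying $T_1$ only to the edges in odd positions, and obtaining the remaining edges by a different route, or by alternating $T_1$ with $T_3$ on consecutive edges. Comparing with the first claimed cycle, the vertices are $(\bar{a},\bar{b})$, then $(\gamma_n\bar{d},\bar{c})$, then $(-\chi\gamma_n da,\overline{ac})$. The first edge is exactly $T_1$ applied to $(a,b)\to(c,d)$. The second edge has left factor $(\gamma_n\bar d,\bar c)$, which is $T_2$'s left-image of $(c,d)$. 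Its right factor is $\gamma_n(-\chi da, \overline{ac})$, which is $T_2$'s right-image of $(\overline{ac},-\chi da)$. So the second edge comes from $T_2$, and it is consistent up to the scalar $\gamma_n$.

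The general pattern is therefore to alternate $T_1$ and $T_2$ around the hexagon, and similarly $T_3$ with $T_2\circ T_1$ type combinations for the other cycles. On the third edge I would apply $T_1$ to $(\overline{ac},-\chi da)\to(a,-b)$. This gives the left vertex $(ac,-\chi\overline{da})$, which must equal, up to a scalar, $T_2$'s right image $(-\chi\gamma_n da,\overline{ac})$ from the previous edge. It does not. So I need to recheck which map is applied to which edge, matching each claimed vertex to the formulas by hand, and using $\chi^2=1$ and $\bar{\bar{x}}=x$.

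The second cycle should arise from $T_3$ on odd edges and $T_3$-compatible maps on even edges. The third cycle should come from composing the maps used for the first two.

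The main obstacle is this bookkeeping. Each of the six edges must be produced by one of the three maps, possibly composed, applied to the corresponding edge of Corollary~\ref{corollary:A_n-double-hexagon}, so that consecutive outputs share a common vertex line. I expect the bookkeeping to close, since each map is an involution-type symmetry and the vertex sets of the claimed cycles are exactly these images. If it fails, the fallback is to re-derive each cycle directly. This means verifying the initial edge via Proposition~\ref{proposition:new-pairs}, checking that the new components still alternate strongly and satisfy condition~\eqref{equation:norm-condition} with the same $\chi$ (conjugation and the swap preserve norms, by Lemma~\ref{lemma:alternative-elements-are-normed}), and then rerunning the proof of Corollary~\ref{corollary:A_n-double-hexagon}.
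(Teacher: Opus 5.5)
Your strategy is the paper's own: its proof simply cites Corollary~\ref{corollary:A_n-double-hexagon} and Proposition~\ref{proposition:new-pairs}. However, the only real content of the argument is deciding which identity goes on which edge so that consecutive images share a vertex, and you leave this unresolved. The patterns you guess do not work. No alternation of two of the identities can close up. ``$T_3$ on odd edges'' is wrong for the second cycle. No composition of maps is needed for the third cycle.

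The missing observation is that the right-factor transformation of each identity is the left-factor transformation of the next one. Put $\sigma_1(x,y)=(\bar x,\bar y)$, $\sigma_2(x,y)=(\gamma_n\bar y,\bar x)$ and $\sigma_3(x,y)=(\gamma_n y,x)$. Then Proposition~\ref{proposition:new-pairs} says exactly that $uv=0$ implies $\sigma_1(u)\sigma_2(v)=\sigma_2(u)\sigma_3(v)=\sigma_3(u)\sigma_1(v)=0$. So let $v_0\to v_1\to\dots\to v_5\to v_0$ be the hexagon of Corollary~\ref{corollary:A_n-double-hexagon}. Then $\sigma_{i+k}(v_k)$, $k=0,\dots,5$ (indices mod~$3$), is again a directed $6$-cycle, and it closes because $3\mid 6$. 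In other words, the identities are used in the cyclic order $T_1,T_2,T_3,T_1,T_2,T_3$, not alternately. An alternation of $T_i$ and $T_j$ would need both $\sigma_{i+1}=\sigma_j$ and $\sigma_{j+1}=\sigma_i$, which is impossible.

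Taking $i=1,3,2$ gives the three claimed cycles. In the first cycle, the second edge is $T_2$ and yields the claimed vertex exactly; your ``$\gamma_n(-\chi da,\overline{ac})$'' is a miscomputation. The third edge is $T_3$, not $T_1$, and $\sigma_3(\overline{ac},-\chi da)=(-\chi\gamma_n da,\overline{ac})$ exactly.

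The only discrepancies anywhere are signs, which are harmless since vertices are lines:
\begin{itemize}
\item $\sigma_2(c,-d)=-(\gamma_n\bar d,-\bar c)$ in the first cycle;
\item $\sigma_3(a,-b)$ in the second cycle;
\item $\sigma_2(a,-b)$ and $\sigma_3(c,-d)$ in the third cycle.
\end{itemize}
Note that $\chi^2=1$ is not even needed on this route.

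Your fallback is also valid: restart Corollary~\ref{corollary:A_n-double-hexagon} from the pairs $(\bar a,\bar b),(\gamma_n\bar d,\bar c)$, and so on. This is the paper's Remark following the corollary. As written it is only a sketch, and you would still need to check three things:
\begin{itemize}
\item strong alternation survives conjugation, scaling and swapping;
\item $\chi((\gamma_n\bar d,\bar c))=1/\chi=\chi$, and similarly for the other starting pairs;
\item the resulting third vertex $(\gamma_n da,-\chi\overline{ac})$ is $-\chi$ times the claimed one. This step does use $\chi^2=1$.
\end{itemize}
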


\begin{proof}
Follows immediately from Corollary~\ref{corollary:A_n-double-hexagon} and Proposition~\ref{proposition:new-pairs}.
\end{proof}

\begin{remark}
The cycles in Corollary~\ref{corollary:additional-hexagons} can also be obtained from Corollary~\ref{corollary:A_n-double-hexagon}, if the starting pairs of zero divisors are the pairs $(\bar{a},\bar{b})$ and $(\gamma_n \bar{d},\bar{c})$, $(\gamma_n b,a)$ and $(\bar{c},\bar{d})$, $(\gamma_n \bar{b},\bar{a})$ and $(\gamma_n d,c)$.
\end{remark}

\begin{descript} \label{description:A_n-double-hexagon}
By using Corollaries~\ref{corollary:A_n-double-hexagon} and~\ref{corollary:additional-hexagons}, we obtain the subgraphs of $\Gamma_Z(\A_{n+1})$ which we call {\em hexagons}. They are depicted in Figure~\ref{figure:directed-hexagon}.
\end{descript}

\begin{figure}[ht]
\centering
\includegraphics[width=\linewidth]{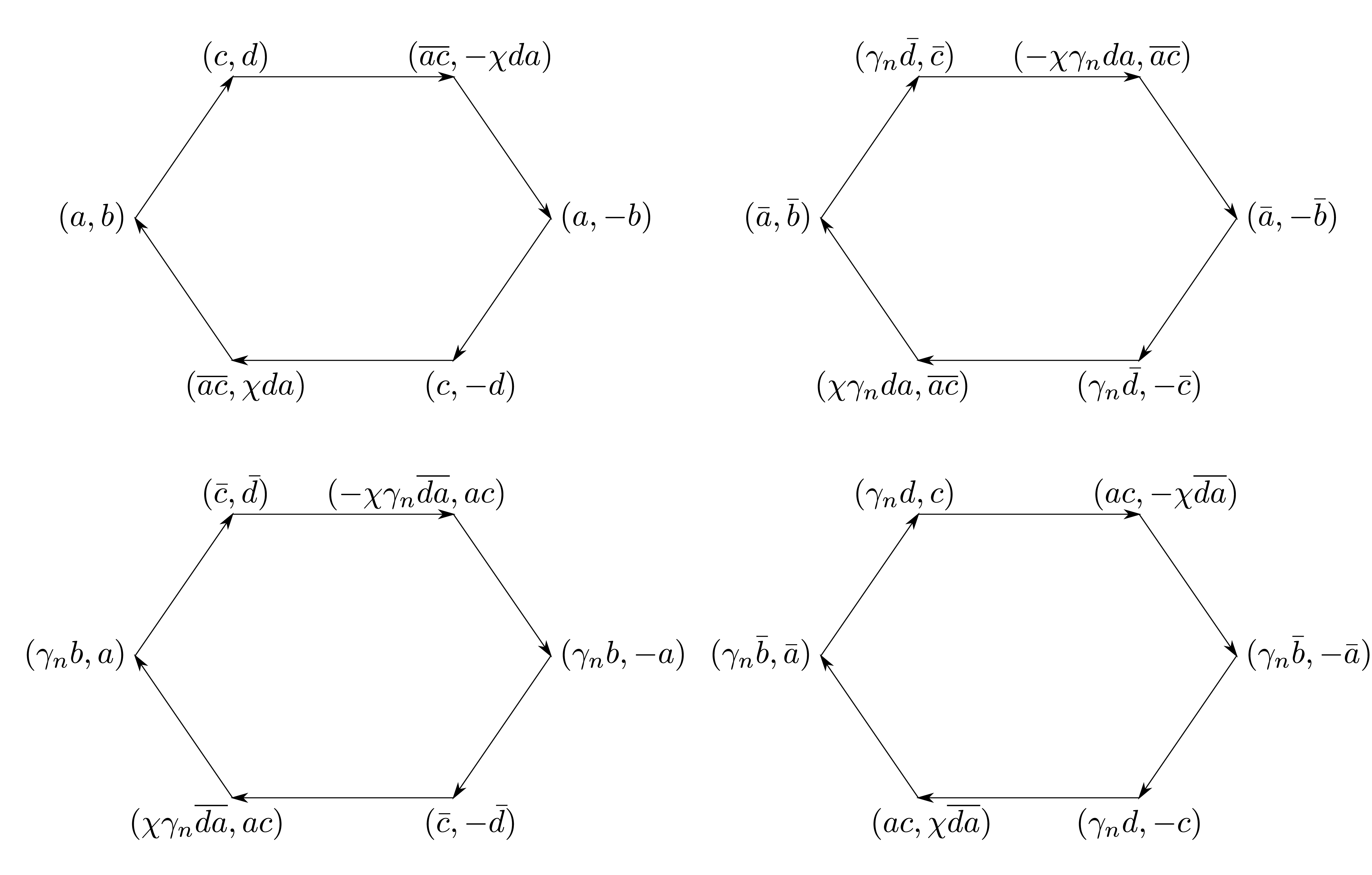}
\vspace{-0.7cm}
\caption{\label{figure:directed-hexagon} Hexagons.}
\end{figure}

Combining the results of Lemmas~\ref{lemma:condition-asterisk} and~\ref{lemma:strongly-alternative-system} and Corollaries~\ref{corollary:A_n-double-hexagon} and~\ref{corollary:additional-hexagons}, we obtain the following theorem.

\begin{theorem} \label{theorem:A_n-double-hexagon}
Let the elements $a,b \in \A_n$ alternate strongly with $c,d \in \A_n$, and $(a,b)(c,d) = 0$ in $\A_{n+1}$. Then
\begin{enumerate}[\rm (1)]
    \item The elements $ac,da$ alternate strongly with each one of the elements $a,b,c,d$.
    \item Let $n(a) \neq 0$ or $n(b) \neq 0$, and let also $n(c) \neq 0$ or $n(d) \neq 0$. Then $(a,b)$, $(c,d)$ and $(\overline{ac},-\chi da)$ satisfy condition~\eqref{equation:norm-condition} with the same value of~$\chi$.
    \item In this case there exist the following $6$-cycles in $\Gamma_Z(\A_{n+1})$:
    \begin{gather*}
    (a,b) \rightarrow (c,d) \rightarrow (\overline{ac},-\chi da) \rightarrow (a,-b) \rightarrow (c,-d) \rightarrow (\overline{ac}, \chi da) \rightarrow (a,b),\\
    (\bar{a},\bar{b}) \rightarrow (\gamma_n \bar{d},\bar{c}) \rightarrow (-\chi \gamma_n da,\overline{ac}) \rightarrow (\bar{a},-\bar{b}) \rightarrow (\gamma_n \bar{d},-\bar{c}) \rightarrow (\chi \gamma_n da,\overline{ac}) \rightarrow (\bar{a},\bar{b}),\\
    (\gamma_n b,a) \rightarrow (\bar{c},\bar{d}) \rightarrow (-\chi \gamma_n \overline{da}, ac) \rightarrow (\gamma_n b,-a) \rightarrow (\bar{c},-\bar{d}) \rightarrow (\chi \gamma_n \overline{da}, ac) \rightarrow (\gamma_n b,a),\\
    (\gamma_n \bar{b},\bar{a}) \rightarrow (\gamma_n d,c) \rightarrow (ac,-\chi \overline{da}) \rightarrow (\gamma_n \bar{b},-\bar{a}) \rightarrow (\gamma_n d,-c) \rightarrow (ac,\chi \overline{da}) \rightarrow (\gamma_n \bar{b},\bar{a}).
    \end{gather*}
\end{enumerate}
\end{theorem}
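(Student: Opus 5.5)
The theorem assembles results already established in this subsection, so the plan is to verify each item by citing the corresponding lemma or corollary. The only real work is checking that every hypothesis is met at each application.

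For item (1), I will invoke Lemma~\ref{lemma:strongly-alternative-system}. It needs only $(a,b)(c,d)=0$ and strong alternativity of $a,b$ with $c,d$, with no norm condition. Its proof is the point to audit. From $(a,b)(c,d)=0$ we get $ac=-\gamma_n\bar d b$ and $da=-b\bar c$. Corollary~\ref{corollary:quaternionic-subalgebra}, applied to each of the four strongly alternating pairs $(a,c)$, $(b,c)$, $(a,d)$, $(b,d)$, places both factors of each pair in an associative subalgebra closed under involution. Each of $ac$ and $da$ therefore lies in two such subalgebras: $ac$ in $\mathbb{H}$ generated by $a,c$ and in the one generated by $b,d$, and $da$ in those generated by $a,d$ and by $b,c$. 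So each of $ac$ and $da$ alternates strongly with all four elements, since associativity of a subalgebra gives $[x,x,y]=[y,y,x]=0$ for $x,y$ inside it.

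For item (2), I will apply Lemma~\ref{lemma:condition-asterisk}. Its hypotheses are: $c,d$ alternate with $a,b$, which is weaker than what we assume; $(a,b)(c,d)=0$; and the two norm non-vanishing conditions. The lemma gives $\chi((a,b))=\chi((c,d))=\chi((\overline{ac},-\chi da))=\pm1$, which is condition~\eqref{equation:norm-condition} with a common $\chi$.

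For item (3), the first cycle is Corollary~\ref{corollary:A_n-double-hexagon}, obtained by five successive applications of Lemma~\ref{lemma:A_n-next-pair}. The step needing care is that each application requires the new pair's components to alternate with the previous pair's components. Item (1) supplies this for $\overline{ac}$ and $da$: they alternate strongly with $a,b,c,d$, and conjugates and scalar multiples preserve this, because $[x,x,y]=0$ implies $[\bar x,\bar x,y]=0$ when $\bar x=t(x)-x$. Two of the steps also need $n(a)\neq0$ and $n(c)\neq0$, which item (2) gives. The remaining three cycles follow from the first by Proposition~\ref{proposition:new-pairs}, which carries each edge of the first hexagon to an edge of the others; these are Corollary~\ref{corollary:additional-hexagons}.

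The main obstacle is that the cycles must be genuine $6$-cycles of \emph{distinct} vertices in $\Gamma_Z$. I would check this by comparing components. For instance, $(a,b)$ and $(a,-b)$ span different lines because $b\neq0$, and $b\neq0$ holds since $n(a)=\pm\gamma_n n(b)\neq0$; similarly $d\neq0$. Any remaining coincidences of lines are ruled out by the norm relations.
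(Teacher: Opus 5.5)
Your handling of items (1)--(3) follows the paper's own argument. The paper obtains the theorem simply by combining Lemma~\ref{lemma:strongly-alternative-system}, Lemma~\ref{lemma:condition-asterisk} and Corollaries~\ref{corollary:A_n-double-hexagon} and~\ref{corollary:additional-hexagons}. Your audit of the hypotheses is accurate: the four pairs fed to Corollary~\ref{corollary:quaternionic-subalgebra}, closure under involution for $\bar d b$ and $b\bar c$, strong alternativity of $ac,da$ used in both directions across the five applications of Lemma~\ref{lemma:A_n-next-pair}, and $n(a),n(c)\neq 0$ from item (2).

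One small precision concerns Proposition~\ref{proposition:new-pairs}. It transforms the tail and the head of an edge differently, so no single one of its three identities maps the first hexagon onto another. You must use the three identities in rotation along the cycle. This closes up because the head-substitution of each identity is the tail-substitution of the next, and $3$ divides $6$.

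The genuine problem is your last paragraph. The assertion that all remaining coincidences of vertices are ruled out by the norm relations is false. The paper makes no distinctness claim, and in this generality the ``$6$-cycles'' are only closed walks of six edges.

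For instance, in $\A_1\{1\}$ take $a=b=c=1$, $d=-1$. Then $(a,b)(c,d)=(1-1,\,-1+1)=0$, all hypotheses hold with $\chi=1$, and the first cycle becomes $(1,1)\to(1,-1)\to(1,1)\to(1,-1)\to(1,1)\to(1,-1)\to(1,1)$. This is a $2$-cycle traversed three times.

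More generally, suppose $(a,b)$ is pure with $\chi((a,b))=1$ and $a,b$ alternate strongly. Then Proposition~\ref{proposition:self-orthogonality} allows $(c,d)=(a,b)$, and the first two vertices coincide; an example is $a=b=c=d=e_1\in\A_1$ in $\A_2\{\gamma_0,1\}$.

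So you should either drop that paragraph or add hypotheses. Distinctness does hold when the norm is anisotropic: there $\chi=-1$, and Corollary~\ref{corollary:linear-independence} gives linear independence via orthogonality. But it does not follow from $n(a)=\pm\gamma_n n(b)\neq 0$ and $n(c)=\pm\gamma_n n(d)\neq 0$ alone.
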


\subsection{Doubly alternative zero divisors} \label{subsection:doubly-alternative-elements}

\begin{notation}
Let $a \in \A_n$. The mappings $L_a, R_a: \A_n \rightarrow \A_n$ are given by
\begin{equation} \label{equation:multiplication-operators}
\begin{aligned}
L_a(x) &= ax,\\
R_a(x) &= xa
\end{aligned}
\end{equation}
for all $x \in \A_n$. They are linear operators on the \mbox{$2^n$-dimensional} linear space~$\A_n$.
\end{notation}

\begin{lemma} \label{lemma:two-sided-kernel}
Let $a \in \A_n$. Then $\dim(\Ker L_a) = \dim(\Ker R_a)$ or, equivalently, $\dim(l.\Ann_\A(a)) = \dim(r.\Ann_\A(a))$.
\end{lemma}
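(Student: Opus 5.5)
The plan is to show that $L_a$ and $R_a$ are adjoint to each other, up to conjugation, with respect to the nondegenerate symmetric bilinear form $\langle \cdot,\cdot\rangle$ on $\A_n$. For adjoint operators with respect to a nondegenerate form, the kernels have equal dimension.

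The key identity is Lemma~\ref{lemma:inner-product-movement}(2): $\langle x, yz\rangle = \langle x\bar z, y\rangle = \langle \bar y x, z\rangle$. Taking $y = a$, it gives
$$
\langle x, L_a(z)\rangle = \langle x, az\rangle = \langle \bar a x, z\rangle = \langle L_{\bar a}(x), z\rangle .
$$
So $L_{\bar a}$ is the adjoint $L_a^{*}$ of $L_a$ with respect to $\langle\cdot,\cdot\rangle$. Because the form is nondegenerate (it is the polarization of the nondegenerate norm, by Proposition~\ref{proposition:real-cayley-dickson-properties}), we have $\operatorname{rank} L_a^{*} = \operatorname{rank} L_a$. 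Concretely, in a basis with Gram matrix $G$ (which is invertible), the adjoint has matrix $G^{-1}L^{T}G$. Hence $\dim \Ker L_a = \dim\Ker L_{\bar a}$.

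Next I relate $L_{\bar a}$ to $R_a$ via the involution. The involution $x\mapsto \bar x$ is a bijective linear map and an anti-automorphism, since it is regular and $\overline{xy} = \bar y\bar x$ holds in $\A_n$; this identity was already used freely earlier in the paper, e.g.\ in the proof of Theorem~\ref{theorem:nonspecial-quaternionic-subalgebra}. Therefore $\overline{R_a(x)} = \overline{xa} = \bar a\bar x = L_{\bar a}(\bar x)$, that is, $R_a = J L_{\bar a} J$ with $J$ the involution and $J^2=\mathrm{id}$. So $\Ker R_a = J(\Ker L_{\bar a})$, and $\dim\Ker R_a = \dim \Ker L_{\bar a} = \dim\Ker L_a$.

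Finally, $\Ker L_a = r.\Ann(a)$ and $\Ker R_a = l.\Ann(a)$ by definition, which yields the equivalent formulation. There is no real obstacle here. The only point needing care is to justify $\overline{xy}=\bar y\bar x$ for all $n$, which follows by a routine induction from the Cayley--Dickson formulas. The adjoint step needs nondegeneracy of $\langle\cdot,\cdot\rangle$, which holds over any field with $\chrs\mathbb{F}\neq 2$ since the form is diagonal in the standard basis with nonzero entries (products of the $\gamma_i$).
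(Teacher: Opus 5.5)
Your proof is correct and follows the paper's route: you use Lemma~\ref{lemma:inner-product-movement}(2) to show that $L_{\bar a}$ is the adjoint of $L_a$ with respect to the nondegenerate form $\langle\cdot,\cdot\rangle$, which gives $\dim\Ker L_a=\dim\Ker L_{\bar a}$, and then you use $\overline{xa}=\bar a\bar x$ to pass from $L_{\bar a}$ to $R_a$. Your extra details, namely the Gram-matrix formula for the adjoint and the diagonal form of $\langle\cdot,\cdot\rangle$ in the standard basis, make explicit what the paper leaves implicit.
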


\begin{proof}
By Lemma~\ref{lemma:inner-product-movement}(2), the linear operators $L_a$ and $L_{\bar{a}}$ are conjugate with respect to the symmetric bilinear form $\langle \cdot, \cdot \rangle$ in the sense that for all $x, y \in \A_n$ we have $\langle L_a(x), y \rangle = \langle x, L_{\bar{a}}(y) \rangle$. Hence $\dim(\Ker L_a) = \dim(\Ker L_{\bar{a}})$. Besides, $L_{\bar{a}}(\bar{x}) = \bar{a}\bar{x} = \overline{xa} = \overline{R_a(x)}$ for all $x \in \A_n$, and thus $\dim(\Ker L_{\bar{a}}) = \dim(\Ker R_a)$, as required.
\end{proof}

\begin{corollary} \label{corollary:two-sided-zero-divisors}
$Z(\A_n) = Z_{LR}(\A_n)$.
\end{corollary}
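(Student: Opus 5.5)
The claim is that $Z(\A_n) = Z_{LR}(\A_n)$, i.e., every one-sided zero divisor is two-sided. I will prove this directly from Lemma~\ref{lemma:two-sided-kernel}. The inclusion $Z_{LR}(\A_n) \subseteq Z(\A_n)$ holds by definition, so only the reverse inclusion needs an argument.

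Let $a \in Z(\A_n)$. By definition, there is a nonzero $b \in \A_n$ with $ab = 0$ or $ba = 0$. Suppose first that $ab = 0$. Then $b \in \Ker L_a$, so $\dim(\Ker L_a) \geq 1$. Lemma~\ref{lemma:two-sided-kernel} gives $\dim(\Ker R_a) = \dim(\Ker L_a) \geq 1$. Hence there is a nonzero $b'$ with $b'a = 0$, and $a$ is both a left and a right zero divisor, that is, $a \in Z_{LR}(\A_n)$. The case $ba = 0$ is symmetric, with the roles of $L_a$ and $R_a$ interchanged.

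One point deserves a check: whether the paper's notion of a two-sided zero divisor requires a single $b \neq 0$ satisfying $ab = ba = 0$ simultaneously. That would be an orthogonality condition rather than two-sidedness, and the phrase ``zero divisors (left, right, or two-sided)'' suggests two-sided means being both a left and a right zero divisor. I will adopt that reading, under which the dimension equality suffices and there is no real obstacle. If the stronger reading were intended, I would instead need a nonzero element of $\Ker L_a \cap \Ker R_a$, which the dimension count alone does not provide; I do not expect this reading to be the intended one.
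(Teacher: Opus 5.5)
Your proof is correct and is essentially the paper's argument: Lemma~\ref{lemma:two-sided-kernel} gives $\dim(\Ker L_a) = \dim(\Ker R_a)$, so $\Ker L_a \neq \{0\}$ if and only if $\Ker R_a \neq \{0\}$. Your reading of ``two-sided'' as ``both a left and a right zero divisor'' is also the one the paper uses, so no common annihilating element is required.
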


\begin{proof}
Let $a \in \A_n$, $a \neq 0$. Then, by Lemma~\ref{lemma:two-sided-kernel}, $\Ker L_a \neq \{ 0 \}$ if and only if $\Ker R_a \neq \{ 0 \}$. In other words, $a$ is a right zero divisor if and only if $a$ is a left zero divisor. Hence the sets of left and right zero divisors in $\A_n$ coincide, that is, $Z(\A_n) = Z_{LR}(\A_n)$.
\end{proof}

Thus, in case of Cayley--Dickson algebras, all zero divisors appear to be two-sided zero divisors. The next proposition describes a relation between orthogonality graphs and zero divisor graphs of these algebras. Note that in~\cite{our_orthographs1} it is formulated for real Cayley--Dickson algebras only, however, its proof is valid verbatim for the case of an arbitrary field.

\begin{proposition} {\rm \cite[Proposition~3.10]{our_orthographs1}} \label{proposition:orthogonality-condition}
An edge $([a], [b])$ in $\Gamma_Z(\A_n)$ is also an edge in $\Gamma_O(\A_n)$ if and only if one of the following conditions holds:
\begin{enumerate}[(1)]
\item $[b] = [\bar{a}]$ and $n(a) = 0$;
\item $t(a) = t(b) = 0$.
\end{enumerate}
\end{proposition}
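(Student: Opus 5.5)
We must prove: an edge $([a],[b])$ with $ab=0$, $[a]\neq[b]$, is also an edge of $\Gamma_O(\A_n)$ (i.e.\ additionally $ba=0$) if and only if either $[b]=[\bar a]$ with $n(a)=0$, or $t(a)=t(b)=0$. The approach is to reduce everything to the identity $ba = \overline{\bar a\,\bar b}$ together with a decomposition of $a$ and $b$ into scalar and pure parts.

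\emph{Sufficiency.} In case (1), take $b=\lambda\bar a$. Then $ba=\lambda\bar a a=\lambda n(a)=0$. In case (2), both $a$ and $b$ are pure, so $\bar a=-a$ and $\bar b=-b$. Hence $ba = \bar b\,\bar a = \overline{ab} = 0$. In either case the edge is undirected, so it lies in $\Gamma_O(\A_n)$.

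\emph{Necessity.} Assume $ab=ba=0$ with $a,b\neq 0$. Write $a=\alpha+a'$ and $b=\beta+b'$, where $\alpha=t(a)/2$, $\beta=t(b)/2$, and $a',b'$ are pure. Subtracting the two equations gives the commutator $[a,b]=a'b'-b'a'=0$. Adding them and expanding gives
$$
2\alpha\beta+2\alpha b'+2\beta a'+a'b'+b'a'=0.
$$
Since $a'b'+b'a'=-2\langle a',b'\rangle$ is a scalar (Lemma~\ref{lemma:A_n-anticomm} and Proposition~\ref{proposition:lambda-form}), the imaginary part of this identity gives $\alpha b'+\beta a'=0$.

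We now split into cases.
\begin{itemize}
\item If $\alpha=\beta=0$, we are in case (2).
\item If exactly one of $\alpha,\beta$ is zero, say $\alpha=0\neq\beta$, then $\beta a'=0$ forces $a'=0$, hence $a=0$, a contradiction. The case $\beta=0\neq\alpha$ is symmetric.
\item If $\alpha,\beta\neq0$, then $b'=-(\beta/\alpha)a'$. Hence $b=(\beta/\alpha)(\alpha-a')=(\beta/\alpha)\bar a$, so $[b]=[\bar a]$. Then $ab=(\beta/\alpha)n(a)=0$ gives $n(a)=0$, which is case (1).
\end{itemize}

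The only delicate point is the scalar-versus-imaginary splitting of the sum identity. This relies on the anticommutator of pure elements being scalar, which is exactly what Lemma~\ref{lemma:A_n-anticomm}(\ref{item:A_n-anticomm-pure}) supplies: a pure element minus its projection onto a pure element anticommutes with it. Everything else is routine.
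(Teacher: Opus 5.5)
Your proof is correct. Sufficiency is exactly right. In the necessity part, the imaginary part of $ab+ba=0$, namely $\alpha b'+\beta a'=0$, together with $ab=0$ does produce exactly cases (1) and (2). The commutator relation $a'b'=b'a'$ is never used and can be dropped.

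The paper does not reprove this proposition; it imports it from the earlier real-case paper. With the tools it sets up, the shortest route goes through Lemma~\ref{lemma:A_n-anticomm}. Orthogonality gives in particular $b\in\Anc_{\A_n}(a)$, and the three cases of that lemma then say:
\begin{enumerate}[(1)]
\item $t(a)\neq0$, $n(a)\neq0$ is impossible, since $b\neq0$;
\item $t(a)\neq0$, $n(a)=0$ forces $b\in\mathbb{F}\bar a$;
\item $t(a)=0$ forces $t(b)=0$.
\end{enumerate}
Your scalar/pure splitting is in effect an inline re-derivation of the part of that lemma needed here. It is self-contained and needs only the polarization identity. The lemma route is three lines but relies on the full anticentralizer description.

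One small correction. Justify that $a'b'+b'a'$ is a scalar directly by Proposition~\ref{proposition:lambda-form}: since $\overline{a'}=-a'$ and $\overline{b'}=-b'$, you get $a'b'+b'a'=-(a'\overline{b'}+b'\overline{a'})=-2\langle a',b'\rangle$. Your closing gloss about subtracting the projection onto $a'$ only makes sense when $n(a')\neq0$. Over an arbitrary field a nonzero pure $a'$ can be isotropic, and then that projection does not exist. This is precisely the kind of degenerate-projection slip the paper itself corrects elsewhere. The identity holds regardless, so your argument stands.
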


It follows from Proposition~\ref{proposition:orthogonality-condition} that any zero divisor $a \in \A_n$ with nontrivial orthogonalizer either is pure or has zero norm. If $a$ is not pure, then its connected component in $\Gamma_O(\A_n)$ consists of two vertices $[a]$ and $[\bar{a}]$. Hence, in the context of orthogonality graphs, we are interested in pure zero divisors only.

We now consider zero divisors $(a,b)\in \A_{n+1}$ such that both elements $a$ and $b$ are alternative elements in~$\A_n$.

\begin{definition} \label{definition:doubly-alternative}
The set of {\em doubly alternative elements} of $\A_{n+1}$ is
$$
DA(\A_{n+1}) = \{ (a, b) \in \A_{n+1} \; | \; \text{ both } a \text{ and }  b \; \text{are alternative in} \; \A_n \}.
$$
\end{definition}

An algebra $\A_n$ is alternative only for $n \leq 3$, so all elements of $\A_{n+1}$ are doubly alternative if and only if $n \leq 3$. Note that doubly alternative elements need not be alternative, see~\cite[Theorem~3.3]{moreno_alternative} and~\cite[Lemma~4.16]{our_split-algebras}.

Note also that, according to~\cite[Example~4.17]{our_split-algebras}, doubly alternative elements need not satisfy condition~\eqref{equation:norm-condition} even in the case when both of their components have nonzero norm. In other words, their characteristic $\chi$ can be well-defined but not equal to $0$ or $\pm 1$. However, by Lemma~\ref{lemma:condition-asterisk}, if the left or the right annihilator of some doubly alternative element $(a,b)$ contains an element $(c,d)$, $n(a) \neq 0$ or $n(b) \neq 0$, and also $n(c) \neq 0$ or $n(d) \neq 0$, then $(a,b)$ satisfies condition~\eqref{equation:norm-condition}.

\begin{lemma} \label{lemma:double-alternative-annihilators}
Let $(a,b) \in DA(\A_{n+1})$ be such that $n(a) \neq 0$. Denote $\chi = \chi((a,b))$. Then
\begin{align*}
l.\Ann_{\A_{n+1}}((a,b)) &= \left\{ \left(c, -\dfrac{(bc)a}{n(a)} \right) \; \bigg| \; b(ca) = \chi (bc)a \right\}, \\
r.\Ann_{\A_{n+1}}((a,b)) &= \left\{ \left(c, -\dfrac{(b\bar{c})\bar{a}}{n(a)} \right) \; \bigg| \; b(\bar{c}\bar{a}) = \chi (b\bar{c})\bar{a} \right\}.
\end{align*}
If, moreover, $t((a,b)) = 0$, then
$$
O_{\A_{n+1}}((a,b)) = \left\{ \left(c, -\dfrac{(bc)a}{n(a)} \right) \; \bigg| \; t(c) = 0, \: b(ca) = \chi (bc)a \right\}.
$$
\end{lemma}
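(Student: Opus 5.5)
We work directly with the multiplication rule of the Cayley--Dickson process. Since $a$ and $b$ are alternative in $\A_n$, every identity of the form $[x,x,y]=0$, $[y,x,x]=0$ with $x\in\{a,\bar a,b,\bar b\}$ is available. By flexibility and $\bar x = t(x)-x$, this also gives $(x\bar x)y=x(\bar x y)$, $y(x\bar x)=(yx)\bar x$, and so on, with arbitrary $y$.

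\textbf{Left annihilator.} We have
\[
(c,d)(a,b)=(ca+\gamma_n\bar b d,\ bc+d\bar a).
\]
From the second component, $d\bar a=-bc$. Right-multiplying by $a$ and using alternativity of $a$ gives $(d\bar a)a=d(\bar a a)=n(a)d$. Hence $d=-\frac{(bc)a}{n(a)}$, which is forced because $n(a)\neq0$. Conversely, for this $d$ we get $d\bar a=-\frac{((bc)a)\bar a}{n(a)}=-bc$, again by alternativity of $a$. So the second component vanishes automatically.

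Substituting into the first component, we need
\[
\bar b\bigl((bc)a\bigr)=\gamma_n^{-1}n(a)\,ca.
\]
The main computation is to rewrite the left side. We use the middle Moufang-type relations available for the alternative element $b$, namely $\bar b(b y)=n(b)y$. I would aim to show that $\bar b((bc)a)$ can be compared with $\bar b(b(ca))=n(b)\,ca$. Since $n(a)=\gamma_n n(b)/\chi$, the condition becomes
\[
\bar b\bigl((bc)a\bigr)=\tfrac{1}{\chi}\,n(b)\,ca=\tfrac{1}{\chi}\,\bar b\bigl(b(ca)\bigr).
\]
If $n(b)\neq0$, left multiplication by $\bar b$ is injective, because $b(\bar b z)=n(b)z$. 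Cancelling gives $b(ca)=\chi(bc)a$, as stated.

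\textbf{The case $n(b)=0$.} Then $\chi=0$, and the claimed condition is $b(ca)=0$. I expect this degenerate case to be the main obstacle, since $\bar b$ can no longer be cancelled. The equation reads $\bar b((bc)a)=\gamma_n^{-1}n(a)\,ca$, and we must show it is equivalent to $b(ca)=0$.

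To go from $b(ca)=0$ to the equation, set $x=ca$, so $bx=0$. Then
\[
(bc)a=\bigl(b(x\bar a)\bigr)a\cdot n(a)^{-1},
\]
since $c=x\bar a/n(a)$. The plan here is to use that $L_b$ and $R_a$ are related through the linearized alternative identities. Conversely, applying $b$ to the equation gives $b(\bar b y)=n(b)y=0$, hence $n(a)\,b(ca)=0$, i.e. $b(ca)=0$. So one direction is immediate. For the other direction I would try the linearized flexible/alternative identity $[b,y,a]$-type manipulations, or alternatively I would absorb this case by noting that the formula with $\chi$ written as $\gamma_n n(b)/n(a)$ is polynomial in the data.

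\textbf{Right annihilator.} This follows by conjugation: $(a,b)(c,d)=0$ if and only if
\[
\overline{(c,d)}\;\overline{(a,b)}=0,\qquad\text{i.e.}\qquad (\bar c,-d)(\bar a,-b)=0.
\]
The element $(\bar a,-b)$ is doubly alternative with the same norms. Applying the left-annihilator formula with $c\mapsto\bar c$, $a\mapsto\bar a$, $b\mapsto -b$ gives $d=-\frac{(b\bar c)\bar a}{n(a)}$ and the condition $b(\bar c\bar a)=\chi(b\bar c)\bar a$.

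\textbf{Orthogonalizer.} When $t((a,b))=0$, Proposition~\ref{proposition:orthogonality-condition} shows that any $(c,d)$ in both annihilators is pure, so $\bar c=-c$. The right-annihilator description then coincides with the left one: the signs cancel in both the expression for $d$ and in the condition. Hence $O_{\A_{n+1}}((a,b))$ is the left-annihilator set intersected with $t(c)=0$.
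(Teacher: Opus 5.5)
Your argument for the case $n(b)\neq 0$ is correct and is essentially the paper's. The paper left-multiplies $ca+\gamma_n\bar b d=0$ by $b$ and uses $b(\bar b d)=n(b)d$. You instead write $n(b)\,ca=\bar b(b(ca))$ and cancel $L_{\bar b}$, which gives both inclusions at once and makes the use of $n(b)\neq0$ explicit. Deriving the right annihilator from the left one via $(a,b)(c,d)=0\iff(\bar c,-d)(\bar a,-b)=0$ is a tidy shortcut compared with the paper's direct recomputation. Your orthogonalizer step is fine.

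\textbf{The gap is the case $n(b)=0$ (so $\chi=0$), and it cannot be filled, because there the inclusion ``$\supseteq$'' is false.}

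If $b=0$, the claimed set is $\A_n\times\{0\}$. But $(c,0)(a,0)=(ca,0)$ and $(ca)\bar a=n(a)c$, so $l.\Ann_{\A_{n+1}}((a,0))=\{0\}$. Already $(a,b)=(e_0,0)=1$ is a counterexample.

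If $b\neq 0$, take $c=\bar b\bar a$. Alternativity of $a$ and $b$ gives $ca=n(a)\bar b$ and $bc=n(b)\bar a=0$. Hence $b(ca)=0$ and $d=-\frac{(bc)a}{n(a)}=0$, so $(c,0)$ lies in the claimed set. Yet
\[
(c,0)(a,b)=(ca,\;bc)=(n(a)\bar b,\;0)\neq 0 .
\]
The same failure occurs for the right annihilator and for the orthogonalizer: take $b=0$, $a$ pure with $n(a)\neq0$, and any nonzero pure $c$.

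Consequently, no manipulation of linearized alternative or flexible identities can give ``$b(ca)=0\Rightarrow\bar b((bc)a)=\gamma_n^{-1}n(a)\,ca$''. The ``polynomial in the data'' idea fails for the same reason: the dimension of the solution space of this linear system jumps at $n(b)=0$. Over an arbitrary $\mathbb{F}$ there is also no continuity argument to appeal to. What you did prove for $n(b)=0$, by applying $L_b$ to the first component, is exactly the inclusion that survives, namely $l.\Ann_{\A_{n+1}}((a,b))\subseteq\{\dots\}$.

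The paper's own proof has the same hole. Its ``reasoning in the opposite way'' must cancel $b$ from $b(ca+\gamma_n\bar b d)=0$, which is legitimate only when $n(b)\neq0$. The lemma needs the extra hypothesis $n(b)\neq 0$, equivalently $\chi\neq 0$. Under that hypothesis your proof is complete.
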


\begin{proof}
We consider $l.\Ann_{\A_{n+1}}((a,b))$ first. Let $(c,d) \in \A_{n+1}$ be such that $(c,d)(a,b) = (ca + \gamma_n \bar{b}d, bc + d\bar{a}) = 0$. Then $bc + d\bar{a} = 0$, so $n(a)d = d(\bar{a}a) = (d\bar{a})a = -(bc)a$. Moreover, $ca + \gamma_n\bar{b}d = 0$ and $\chi n(a) = \gamma_n n(b)$, hence $b(ca) = -\gamma_n b(\bar{b}d) = -\gamma_n (b\bar{b})d = -\gamma_n n(b)d = -\chi n(a)d = \chi (bc)a$. Reasoning in the opposite way, we may conclude that for any $c \in \A_n$ such that $b(ca) = \chi (bc)a$ we have $\left(c, -\frac{(bc)a}{n(a)} \right) \in l.\Ann_{\A_{n+1}}((a,b))$. So, the converse is also true.

We now proceed to $r.\Ann_{\A_{n+1}}((a,b))$. Let $(c,d) \in \A_{n+1}$ be such that $(a,b)(c,d) = (ac + \gamma_n \bar{d}b, da + b\bar{c}) = 0$. Then $da + b\bar{c} = 0$, so $n(a)d = d(a\bar{a}) = (da)\bar{a} = -(b\bar{c})\bar{a}$. Since $ac + \gamma_n \bar{d}b = 0$ and $\chi n(a) = \gamma_n n(b)$, we have $b(\bar{c}\bar{a}) = b(\overline{ac}) = b(\overline{-\gamma_n \bar{d}b}) = -\gamma_n b(\bar{b}d) = -\gamma_n (b\bar{b})d = -\gamma_n n(b) d = -\chi n(a) d = \chi (b\bar{c})\bar{a}$.
Clearly, the converse is also true in this case, that is, for any $c \in \A_n$ such that $b(\bar{c}\bar{a}) = \chi (b\bar{c})\bar{a}$ we have $\left(c, -\frac{(b\bar{c})\bar{a}}{n(a)} \right) \in r.\Ann_{\A_{n+1}}((a,b))$.

Finally, the formula for $O_{\A_{n+1}}((a,b))$ follows from Proposition~\ref{proposition:orthogonality-condition}.
\end{proof}

\begin{corollary} \label{corollary:no-chords-in-hexagons}
Let $a,b,c,d,ac,ad \in \A_n$ alternate strongly pairwise, $(a,b)(c,d) = 0$ in $\A_{n+1}$, $(a,b)$ and $(c,d)$ satisfy condition~\eqref{equation:norm-condition}. Let also $t(a) = t(c) = t(ac) = 0$. Then the left upper hexagon in Figure~\ref{figure:directed-hexagon} is an undirected hexagon in $\Gamma_O(\A_{n+1})$, and there are no other edges in $\Gamma_O(\A_{n+1})$ which connect its vertices, that is, there are no chords in it.
\end{corollary}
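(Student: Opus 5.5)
The plan is to identify the six vertices of the left upper hexagon in Figure~\ref{figure:directed-hexagon} with the first cycle of Theorem~\ref{theorem:A_n-double-hexagon}(3):
$$
(a,b),\ (c,d),\ (\overline{ac},-\chi da),\ (a,-b),\ (c,-d),\ (\overline{ac},\chi da).
$$
The proof then has two parts: show that the six cycle edges are orthogonality edges, and show that no other pair of these vertices is orthogonal.

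\textbf{Cycle edges.} By Proposition~\ref{proposition:real-cayley-dickson-properties}, $t((x,y)) = t(x)$. The hypotheses $t(a)=t(c)=t(ac)=0$ give $t(\overline{ac}) = 0$ as well, so all six vertices are pure. Every directed edge of the cycle is an edge of $\Gamma_Z(\A_{n+1})$ by Corollary~\ref{corollary:A_n-double-hexagon}. Proposition~\ref{proposition:orthogonality-condition}(2) then makes each of them an edge of $\Gamma_O(\A_{n+1})$, so the hexagon is undirected in $\Gamma_O(\A_{n+1})$.

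I would also record the nondegeneracy facts needed below. By condition~\eqref{equation:norm-condition} and Lemma~\ref{lemma:condition-asterisk}, $n(a), n(b), n(c), n(d)$ are nonzero. Lemma~\ref{lemma:alternative-elements-are-normed} then gives $n(ac), n(da), n(ba), \ldots \neq 0$. In particular the six lines are pairwise distinct, since their first or second components differ by nonproportional nonzero elements.

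\textbf{No chords.} There are two kinds of non-adjacent pairs: opposite vertices at distance $3$, and vertices at distance $2$. For each such pair it suffices to exhibit one nonzero product.

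For opposite vertices, compute
$$
(a,b)(a,-b) = \bigl(a^2 - \gamma_n n(b),\, -2ba\bigr) = \bigl(-(1+\chi)n(a),\, -2ba\bigr).
$$
If $\chi=-1$, the first component may vanish, but $ba \neq 0$ because $n(ba)=n(a)n(b)\neq 0$. The pairs $(c,d),(c,-d)$ and $(\overline{ac},\pm\chi da)$ are handled in the same way; for the latter, Lemma~\ref{lemma:condition-asterisk} gives the same characteristic and nonzero norms.

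For distance-$2$ pairs such as $(a,b)$ and $(\overline{ac},-\chi da) = (-ac,-\chi da)$, I would expand the product with the Cayley--Dickson formula. Then I would simplify using the relations $ac=-\gamma_n\bar d b$ and $da=-b\bar c$, together with the associativity of the subalgebras $\spn(e_0,x,y,xy)$ from Corollary~\ref{corollary:quaternionic-subalgebra}; this is where the pairwise strong alternativity of $a,b,c,d,ac,ad$ is used. For instance, the first component becomes
$$
n(a)c - \chi\gamma_n\, \bar a(\bar d b) = n(a)c - \chi n(a) c = (1-\chi)n(a)c.
$$
This is nonzero when $\chi=-1$. When $\chi=1$, the second component should reduce to a nonzero multiple of $d$ (or of a product of norm-nonzero elements). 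The remaining distance-$2$ pairs follow by the symmetry $b\mapsto -b$, $d\mapsto -d$ of the cycle, or by analogous computations.

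\textbf{Main obstacle.} The hardest step is the careful reassociation in these distance-$2$ products. Each product of three elements must be justified by exhibiting an associative subalgebra containing all three, which is exactly why the hypothesis includes $ac, ad$ alternating strongly with everything. I would first try to reduce every triple product to one of the forms $x(xy)$ or $(xy)y$, using $ac=-\gamma_n\bar db$ and $da=-b\bar c$, so that plain alternativity suffices.
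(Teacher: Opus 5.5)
Your handling of the cycle edges and of the opposite vertices is correct and essentially matches the paper. The gap is in the distance-$2$ pairs.

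Corollary~\ref{corollary:quaternionic-subalgebra} makes $\spn(e_0,x,y,xy)$ associative only for a single pair $x,y$. The factors $\bar a,\bar d,b$ do not lie in one such subalgebra, and the step $(\bar a\bar d)b=\bar a(\bar d b)$ is false in general. Indeed, from $da=-b\bar c$ you get $\overline{da}\,b=-(c\bar b)b=-n(b)c$, which uses only $[b,b,c]=0$. Hence $-\chi\gamma_n\overline{da}\,b=\chi\gamma_n n(b)c=n(a)c$. On the other hand, $-\chi\gamma_n\bar a(\bar d b)=\chi\,\bar a(ac)=\chi n(a)c$, and these differ whenever $\chi=-1$.

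There is also a sign slip: $\gamma_n\bar a(\bar d b)=-\bar a(ac)=-n(a)c$, not $n(a)c$. The two errors happen to cancel for $\chi=-1$. But the true first component of $(a,b)(\overline{ac},-\chi da)$ is $2n(a)c$ for both values of $\chi$, while your formula gives $0$ when $\chi=1$. Your fallback to the second component in that case is an unchecked guess. The remaining distance-$2$ pairs are only asserted, and the method you propose for them is the same invalid reassociation.

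You could repair the computation as just shown: rewrite each triple product via $ac=-\gamma_n\bar d b$ and $da=-b\bar c$ into a form $x(xy)$ or $(xy)y$, as you suggest at the end. The paper instead avoids computing distance-$2$ products at all. In this hexagon the vertex opposite $(z,w)$ is $(z,-w)$, so each distance-$2$ vertex of $(x,y)$ is the sign flip $(z,-w)$ of a neighbour $(z,w)$ of $(x,y)$. If $(x,y)$ annihilated both, subtracting would give $(x,y)(0,w)=(\gamma_n\bar w y,\,wx)=0$, hence $wx=0$. But $w$ alternates with $x$ (by hypothesis or Lemma~\ref{lemma:strongly-alternative-system}), and $n(x),n(w)\neq 0$. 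So $n(wx)=n(w)n(x)\neq 0$ by Lemma~\ref{lemma:alternative-elements-are-normed}, a contradiction. (The paper phrases this as: $(x,y)(z,\pm w)=0$ forces $\pm w=-(yz)x/n(x)$, so $w=-w$.) This settles all six distance-$2$ pairs uniformly, with no three-factor reassociation.
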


\begin{proof}
Since $t(a) = t(c) = t(ac) = 0$, it follows from Proposition~\ref{proposition:orthogonality-condition} that this hexagon is a hexagon not only in $\Gamma_Z(\A_{n+1})$ but also in $\Gamma_O(\A_{n+1})$. 

Let now $(x,y)$ and $(z,w)$ be two of its vertices, possibly equal. We first show that $(x,y)$ cannot be orthogonal both to $(z,w)$ and to $(z,-w)$. We assume otherwise. Then, similarly to the proof of Lemma~\ref{lemma:double-alternative-annihilators}, $(x,y)(z,w) = 0$ implies $w = -\frac{(yz)x}{n(x)}$, and $(x,y)(z,-w) = 0$ implies $-w = -\frac{(yz)x}{n(x)}$, so $w = -w$. But $w \neq 0$, a contradiction.

Let $t(x) = 0$. It is possible in general that $(x,y)(x,-y) = (- n(x) - \gamma_n n(y), -2yx) = 0$, so $(x,y)$ and $(x,-y)$ are orthogonal. However, if $x$ alternates strongly with $y$ and $(x,y)$ satisfies condition~\eqref{equation:norm-condition}, then $(x,y)$ cannot be orthogonal to $(x,-y)$. Indeed, by Lemma~\ref{lemma:alternative-elements-are-normed}, $n(yx) = n(y)n(x) \neq 0$. Hence $yx \neq 0$, and thus $(x,y)(x,-y) \neq 0$.
\end{proof}

We use considerations from~\cite[p. 21]{moreno} in the proof of the following lemma.

\begin{lemma} \label{lemma:zero-self-associator}
Let $a, b, c \in \A_n$ satisfy $t(a) = t(b) = 0$, $[a,b,b] = 0$ and $b = [a,c,b]$. Then $n(b) = 0$.
\end{lemma}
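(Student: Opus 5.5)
The plan is to compute $n(b)=\langle b,b\rangle$ by substituting $b=[a,c,b]$ into one slot. This gives
$$
n(b)=\langle b,(ac)b\rangle-\langle b,a(cb)\rangle .
$$
Each term will then be simplified with the movement rules of Lemma~\ref{lemma:inner-product-movement}(2), namely $\langle x,yz\rangle=\langle x\bar z,y\rangle=\langle \bar y x,z\rangle$. The hypotheses will be used in the following way: $t(b)=0$ gives $\bar b=-b$ and $b^2=-n(b)$; $t(a)=0$ gives $\bar a=-a$; and $[a,b,b]=0$ gives $(ab)b=ab^2=-n(b)a$.

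For the first term, moving $b$ to the left slot gives
$$
\langle b,(ac)b\rangle=\langle b\bar b,ac\rangle=n(b)\langle e_0,ac\rangle,
$$
and a second application of the movement rule gives $\langle e_0,ac\rangle=\langle \bar c,a\rangle$.

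For the second term, first move $a$:
$$
\langle b,a(cb)\rangle=\langle \bar a b,cb\rangle=-\langle ab,cb\rangle .
$$
Then move $b$ out of the right slot:
$$
\langle ab,cb\rangle=\langle (ab)\bar b,c\rangle=-\langle (ab)b,c\rangle=n(b)\langle a,c\rangle .
$$
Here the step $(ab)b=ab^2$ is exactly the hypothesis $[a,b,b]=0$, and it is the only place the associator condition enters. Hence $\langle b,a(cb)\rangle=-n(b)\langle a,c\rangle$.

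Combining the two terms,
$$
n(b)=n(b)\bigl(\langle a,\bar c\rangle+\langle a,c\rangle\bigr)=n(b)\,t(c)\,\langle a,e_0\rangle=\tfrac12\,n(b)\,t(c)\,t(a)=0,
$$
by Proposition~\ref{proposition:lambda-form} and $t(a)=0$.

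The only delicate point is keeping the signs straight in the movement identities. With the opposite relative sign one would get $\langle a,\bar c-c\rangle$, which need not vanish. So I will double-check each application of $\langle x,yz\rangle=\langle x\bar z,y\rangle=\langle\bar y x,z\rangle$, and in particular the conjugations $\bar a=-a$ and $\bar b=-b$.
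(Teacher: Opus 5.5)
Your proof is correct and the signs check out: $\langle b,(ac)b\rangle=n(b)\langle a,\bar c\rangle$ and $\langle b,a(cb)\rangle=-n(b)\langle a,c\rangle$, so $n(b)=n(b)\,t(c)\langle a,e_0\rangle=0$. This is essentially the paper's argument written out by hand: the paper notes that $S(x)=[a,x,b]=R_bL_a-L_aR_b$ is skew-symmetric for pure $a,b$, so $n(b)=\langle S(c),S(c)\rangle=-\langle c,S(S(c))\rangle=-\langle c,[a,b,b]\rangle=0$, and your computation carries out this move of operators across $\langle\cdot,\cdot\rangle$ term by term.
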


\begin{proof}
Consider the mapping $S: \A_n \rightarrow \A_n$ given by $S(x) = [a,x,b]$ for all $x \in \A_n$. Then $S = R_b L_a - L_a R_b$, where $L_a$ and $R_b$ are defined by Eq.~\eqref{equation:multiplication-operators}. The linear operators $L_a$ and $R_b$ are skew-symmetric with respect to the symmetric bilinear form $\langle \cdot, \cdot \rangle$, since, by Lemma~\ref{lemma:inner-product-movement}(2), $\langle L_a(x), y \rangle = \langle ax, y \rangle = \langle x, \bar{a} y \rangle = \langle x, -a y \rangle = - \langle x, L_a y \rangle$. Therefore, $S$ is also skew-symmetric, and $S(S(x)) = 0$ implies $0 = \langle x, -S(S(x)) \rangle = \langle S(x), S(x) \rangle = n(S(x))$. Since $b = S(c)$ and $0 = [a,b,b] = S(b) = S(S(c))$, we obtain $n(b) = n(S(c)) = 0$.
\end{proof}

The next theorem is a generalization of Lemma~\ref{lemma:A_n-commutativity-through-orthogonality}(1) to the case of doubly alternative zero divisors, whose characteristic $\chi$ equals $1$. If the characteristic is well-defined but not equal to $1$, then the norm of such an element is nonzero, so Lemma~\ref{lemma:A_n-commutativity-through-orthogonality}(2) can be applied. Thus we know the explicit form of the centralizer of an arbitrary pure doubly alternative zero divisor, whose first component has nonzero norm.

\begin{theorem} \label{theorem:split-algebras-commutativity-through-orthogonality}
Let $(a,b) \in DA(\A_{n+1})$ be pure and $\chi((a,b)) = 1$. Then $C_{\A_{n+1}}((a,b)) = \mathbb{F} \oplus O_{\A_{n+1}}((a,b))$.
\end{theorem}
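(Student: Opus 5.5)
The plan is to reduce the claim to showing that a certain scalar vanishes, and then to extract that vanishing from the component equations together with Lemma~\ref{lemma:zero-self-associator}. Write $x = (a,b)$, so $t(a) = 0$ and $n(a) = \gamma_n n(b) \neq 0$ because $\chi = 1$. By Proposition~\ref{proposition:self-orthogonality}, $x^2 = 0$, so $x \in O_{\A_{n+1}}(x)$, and the inclusion $\mathbb{F} \oplus O_{\A_{n+1}}(x) \subseteq C_{\A_{n+1}}(x)$ is clear (the sum is direct and orthogonal as in the proof of Lemma~\ref{lemma:A_n-commutativity-through-orthogonality}). For the converse, by Lemma~\ref{lemma:difference-centralizer-orthogonalizer} it suffices to take a pure $y = (c,d)$ with $xy = yx$ and prove $xy = 0$.

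Since $x$ and $y$ are pure, $xy + yx = -2\langle x, y \rangle$. Combined with $xy = yx$ this gives $xy = yx = k$ for the scalar $k = -\langle x, y\rangle \in \mathbb{F}$, so everything comes down to proving $k = 0$. Writing out both products gives
\begin{align*}
ac + \gamma_n \bar{d}b &= k, & da + b\bar{c} &= 0,\\
ca + \gamma_n \bar{b}d &= k, & bc + d\bar{a} &= 0.
\end{align*}
Since $t(c) = 0$, the two second-component equations are consistent. Using the alternativity of $a$ (as in Lemma~\ref{lemma:double-alternative-annihilators}), they yield $d = -\frac{(bc)a}{n(a)}$. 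Substituting this into the first-component equations and using alternativity of $a$ and $b$ in $\A_n$ together with $\gamma_n n(b) = n(a)$ should reduce the system to a single relation of the form
$$
k\, u = [p, q, u]
$$
in $\A_n$. Here $u \in \{a, b\}$ and $p$ is pure; the natural first guess is $u = b$ and $p = a$, with $q$ built from $c$. The idea is that the terms $b(ca)$ and $(bc)a$ appear with coefficients that cancel exactly when $\chi = 1$, leaving a pure associator, exactly as in the derivation of the condition $b(ca) = \chi(bc)a$ in Lemma~\ref{lemma:double-alternative-annihilators}.

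If $k \neq 0$, divide by $k$ to get $u = [p, q/k, u]$. Since $u$ is alternative, $[p,u,u] = 0$, so Lemma~\ref{lemma:zero-self-associator} applies and gives $n(u) = 0$. This contradicts $n(a) \neq 0$, which in turn gives $n(b) = n(a)/\gamma_n \neq 0$. Hence $k = 0$, so $y \in O_{\A_{n+1}}(x)$, which finishes the proof.

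The main obstacle is the middle step: manipulating $ac + \gamma_n\bar{d}b = k$ and $ca + \gamma_n\bar{b}d = k$ with $d = -(bc)a/n(a)$ until a clean identity of the shape $k u = [p,q,u]$ with $[p,u,u] = 0$ appears. The products involve three factors $a, b, c$ with only $a$ and $b$ alternative, so the associativity moves must be chosen carefully. I would first subtract the two equations, or multiply one of them by $b$ on the left, and simplify using the identities $\bar{b}(bz) = n(b)z$ and $(za)\bar{a} = n(a)z$, which hold because $a$ and $b$ are alternative.
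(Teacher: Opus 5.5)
Your strategy is the paper's: reduce to a pure $(c,d)$ with $(a,b)(c,d)=(c,d)(a,b)=k\in\mathbb{F}$, derive an associator identity for $b$, and contradict $n(b)\neq 0$ via Lemma~\ref{lemma:zero-self-associator}. But the proof has a gap at its central step. The relation $ku=[p,q,u]$ is only predicted (``should reduce''), never derived, and that identity is the whole argument. Your first suggested manipulation cannot produce it. Since $\bar a=-a$ and $\bar c=-c$, the expression $ca+\gamma_n\bar{b}d$ is the conjugate of $ac+\gamma_n\bar{d}b$, so subtracting the two first-component equations only restates that $k$ is a scalar.

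What works is left-multiplying the second equation $ca+\gamma_n\bar b d=k$ by $b$. Alternativity of $b$ gives $b(\bar b d)=n(b)d$. Combining this with $\gamma_n n(b)=n(a)$ and $n(a)d=-(bc)a$, flexibility yields
\[
kb=b(ca)+n(a)d=b(ca)-(bc)a=-[b,c,a]=[a,c,b].
\]
So the terms $b(ca)$ and $(bc)a$ do not cancel; they combine into an associator because $\chi=1$. This is exactly the conjugate of the paper's step. The paper right-multiplies $ac+\gamma_n\bar d b=k$ by $\bar b$ and uses $n(a)\bar d=-a(c\bar b)$ to get $k\bar b=[a,c,\bar b]$.

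There is a second, smaller omission. Lemma~\ref{lemma:zero-self-associator} needs $t(b)=0$, which you never verify. It is not a hypothesis, since purity of $(a,b)$ means only $t(a)=t((a,b))=0$, and $b$ may a priori have nonzero trace. It follows from the identity itself: by Lemma~\ref{lemma:A_n-zero-trace-associator}(1), $k\,t(b)=t([a,c,b])=0$, and $k\neq 0$. The paper makes this observation explicitly before applying Lemma~\ref{lemma:zero-self-associator}. You also need $[a,b,b]=-[b,b,a]=0$, which follows from flexibility and alternativity of $b$. With these steps supplied, dividing by $k$ gives $b=[a,c/k,b]$, hence $n(b)=0$, and your argument is complete.
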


\begin{proof}
Since $\chi((a,b)) = 1$, we have $n((a,b)) = n(a) - \gamma_n n(b) = 0$. Assume that there exists $(c, d) \in C_{\A_{n+1}}((a,b)) \setminus (\mathbb{F} \oplus O_{\A_{n+1}}((a,b)))$. We may assume without loss of generality that $t(c) = 0$. Then
$$
\overline{(a,b) (c,d)} = \overline{(c,d)} \cdot \overline{(a,b)} = (c,d) (a,b) = (a,b) (c,d),
$$
that is, $(a,b) (c,d) = k \in \mathbb{F}$. Since $(c,d) \notin O_{\A_{n+1}}((a,b))$, we have $k \neq 0$. Assume without loss of generality that $k = 1$. Then
$$
(1,0) = (a,b) (c,d) = (ac + \gamma_n \bar{d}b, da + b\bar{c})=(ac + \gamma_n \bar{d}b, da - bc).
$$
The condition $da = bc$ implies $n(a)d = d(a\bar{a}) = (da)\bar{a} = (bc)\bar{a}$, and thus $n(a) \bar{d} = a(\bar{c}\bar{b}) = -a(c\bar{b})$. We next multiply the equality $1 = ac + \gamma_n \bar{d}b$ by $\bar{b}$ on the right and substitute the expression for $n(a) \bar{d}$:
\begin{align*}
\bar{b} &= (ac)\bar{b} + \gamma_n (\bar{d}b)\bar{b} = (ac)\bar{b} + \gamma_n \bar{d}(b\bar{b}) = (ac)\bar{b} + \gamma_n n(b) \bar{d} =\\
&= (ac)\bar{b} + n(a) \bar{d} = (ac)\bar{b} - a(c\bar{b}) = [a,c,\bar{b}].
\end{align*}
It follows from Lemma~\ref{lemma:A_n-zero-trace-associator}(1) that $t(\bar{b}) = t([a,c,\bar{b}]) = 0$, hence $\bar{b} = -b$ and $b = [a,c,b]$. By Lemma~\ref{lemma:zero-self-associator}, we have $n(b) = 0$, a contradiction with $\chi((a,b)) = 1$.
\end{proof}

\section{Zero divisors in algebras with anisotropic norm} \label{section:main-sequence}

In Lemma~\ref{lemma:moreno-zero-divisor-conditions} we extend the results of Section~1 of Moreno's paper~\cite{moreno}, which were obtained for real algebras of the main sequence, to the case of Cayley--Dickson algebras with anisotropic norm over an arbitrary field~$\mathbb{F}$, $\chrs \mathbb{F} \neq 2$. Recall that we denote $\til{e}_0 = (0,e_0) \in \A_n$ and $\til{a} = a \til{e}_0$ for all $a \in \A_n$.

Corollary~\ref{corollary:two-sided-zero-divisors} shows that in Cayley--Dickson algebras all zero divisors appear to be two-sided zero divisors, that is, $Z(\A_n) = Z_{LR}(\A_n)$. In case of Cayley--Dickson algebras with anisotropic norm a stronger result holds.

\begin{lemma} {\rm \cite[Corollaries~1.5,~1.6,~1.9 and~1.12]{moreno}}  \label{lemma:moreno-zero-divisor-conditions} \label{lemma:moreno-doubly-pure} \label{lemma:moreno-tilde}
Let $\A_n$ be a Cayley--Dickson algebra with anisotropic norm, $a, b \in \A_n$. Then
\begin{enumerate}[{\rm (1)}]
    \item $n(ab) = n(\bar{a}b) = n(a\bar{b}) = n(ba)$;
    \item the elements $ab$, $ba$, $\bar{a}b$, $a\bar{b}$ are equal to zero or not equal to zero simultaneously;
    \item if $a \in Z(\A_n)$, then $t(a) = 0$;
    \item if $a \in Z(\A_n)$, then $a$ is doubly pure;
    \item $ab = 0$ if and only if $a\til{b} = 0$.
\end{enumerate}
\end{lemma}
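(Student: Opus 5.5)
The plan is to induct on $n$, using Lemma~\ref{lemma:inner-product-movement}(2) together with the skew-symmetry of multiplication by pure elements. For $n \le 3$ the algebra is a composition algebra, so every item is immediate: $n(ab)=n(a)n(b)$, and with anisotropic norm there are no zero divisors at all. This makes (2)--(5) vacuous or trivial there.

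For (1), I would first dispose of the real parts. Writing $a = \alpha + a'$ with $\alpha = t(a)/2$ and $a'$ pure, we have $\bar a = \alpha - a'$. Expanding gives
$$
n(ab) = \alpha^2 n(b) + n(a'b) + 2\alpha\langle b, a'b\rangle .
$$
By Lemma~\ref{lemma:inner-product-movement}(2) we get $\langle b, a'b\rangle = \langle \bar{a'} b, b\rangle = -\langle a'b, b\rangle$, so this cross term vanishes, and
$$
n(ab) = n(\bar a b) = \alpha^2 n(b) + n(a'b).
$$
The same argument applied on the right gives $n(ab) = n(a\bar b)$. Finally, $n(ba) = n(\overline{ba}) = n(\bar a \bar b)$, and combining these identities yields (1). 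Note that (1) holds in any Cayley--Dickson algebra; anisotropy is not needed here.

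Item (2) then follows from (1), because under an anisotropic norm an element is zero exactly when its norm is zero.

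For (3), suppose $ab=0$ with $b\neq 0$ and write $a=\alpha+a'$. Then $a'b = -\alpha b$. Applying $L_{a'}$ and using $a'(a'b) = a'^2 b = -n(a')b$, which holds by Corollary~\ref{corollary:quaternionic-subalgebra} or by flexibility in the form $[a',a',b]=0$, gives $-n(a')b = \alpha^2 b$. Hence $\alpha^2 + n(a') = 0$, i.e.\ $n(a)=0$ when $a$ is viewed as $\alpha + a'$. Anisotropy then forces $\alpha=0$ and $a'=0$, contradicting $a\neq 0$ unless I have made a slip. I should double-check this step: the identity $[a',a',b]=0$ actually holds for pure $a'$ in any Cayley--Dickson algebra, since $a'^2 \in \mathbb{F}$ and $a'(a'x)=a'^2x$ is a standard identity (\cite{schafer}). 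Granting it, $n(\alpha e_0 + a')=\alpha^2 + n(a') = 0$ forces $a=0$. That would mean no zero divisors at all, which is false, so the correct route must be different. Instead I will use (2): from $ab=0$ we get $\bar a b=0$, so $t(a)b = (a+\bar a)b = 0$ and hence $t(a)=0$.

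For (4), write $a=(a_1,a_2)$. Then $\til a = (\gamma a_2, a_1)$ is a zero divisor as well, because $\til a\,\til b = \pm\gamma\, ab$ by Lemma~\ref{lemma:tilde-properties}(2),(1) when $b$ is doubly pure; more simply, I will derive (5) first and apply (3) to $\til a$. Its trace is $t(\gamma a_2)$, and item (3) gives $t(a_2)=0$.

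For (5), note that $a\til b = a(b\til e_0)$. Since $\til e_0$ is strongly alternative by Corollary~\ref{corollary:e_0-strongly-alternative}, the goal is to relate $a(b\til e_0)$ to $\pm(ab)\til e_0$ or to $\til{\bar a}\,b$-type expressions. I expect this to be the main obstacle. I would compute directly in components, $a(b\til e_0) = (a_1,a_2)(\gamma b_2, b_1)$, and compare it with $ab$ using the norm identity from (1) for the components, together with Lemma~\ref{lemma:inner-product-movement}. The aim is to show $n(a\til b) = -\gamma\, n(ab)$, i.e.\ that the cross terms $\langle\cdot,\cdot\rangle$ cancel, after which anisotropy concludes.
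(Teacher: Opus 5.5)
Your arguments for (1)--(3) work in their final form. For (1) you split off the real part and kill the cross term $\langle b, a'b\rangle$ by skew-symmetry of $L_{a'}$. This is a valid alternative to the paper's computation $n(ab)=\frac12 t(\bar b(\bar a(ab)))$ with $\bar a(ab)=a(\bar a b)$. Your final argument for (3) is the paper's.

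You should strike the digression in (3), however. The ``standard identity'' $a'(a'x)=a'^2x$ for all pure $a'$ is false once $n\ge 4$: it says exactly that $a'$ is alternative. Flexibility gives $[a',x,a']=0$, not $[a',a',x]=0$, and Corollary~\ref{corollary:quaternionic-subalgebra} requires strong alternation. Your own computation shows that it fails, since it would exclude zero divisors from every anisotropic $\A_n$. The announced induction on $n$ is also never used.

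The genuine gap is (5), and your (4) depends on it. You only announce the target $n(a\til{b})=-\gamma_{n-1}n(ab)$. This identity is in fact true for all $a,b$ in any Cayley--Dickson algebra. However, your listed tools (item (1) and Lemma~\ref{lemma:inner-product-movement}(2)) only handle part of it; the missing ingredient is flexibility of $\A_{n-1}$.

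Write $a=(a_1,a_2)$, $b=(b_1,b_2)$, $\gamma=\gamma_{n-1}$. Then $a\til{b}=(\gamma(a_1b_2+\bar b_1a_2),\,b_1a_1+\gamma a_2\bar b_2)$. In $n(a\til{b})+\gamma n(ab)$, the squared-norm terms cancel pairwise by $n(xy)=n(yx)$ in $\A_{n-1}$. The mixed terms, after moving $b_1,b_2$ across with Lemma~\ref{lemma:inner-product-movement}(2), add up to
\[
-2\gamma^2\langle [b_1,a_1,b_2]+[b_2,a_1,b_1],\,a_2\rangle ,
\]
which vanishes by the linearized flexible law $[x,y,z]=-[z,y,x]$. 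Anisotropy and $\gamma\neq 0$ then give (5).

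For (4) you still have to say why $\til{a}\in Z(\A_n)$. If $a\in Z(\A_n)$, item (2) gives $ba=0$ for some $b\neq 0$, and (5) applied to the pair $(b,a)$ gives $b\til{a}=0$.

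The paper avoids this computation by reversing the order. The third identity of Proposition~\ref{proposition:new-pairs} shows directly that $\til{a}=(\gamma a_2,a_1)$ annihilates $(\bar b_1,\bar b_2)$, which gives (4). Then (5) follows from the first identity there, because $a$ and $b$ are doubly pure. Your route, once completed, yields a stronger statement: a norm identity valid without anisotropy or purity assumptions.
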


\begin{proof}
\leavevmode
\begin{enumerate}[{\rm (1)}]
    \item By Lemma~\ref{lemma:inner-product-movement}(2), $n(ab) = \langle ab, ab \rangle = \langle \bar{a}(ab), b \rangle = \langle \bar{b}(\bar{a}(ab)), e_0 \rangle = \frac{1}{2} t(\bar{b}(\bar{a}(ab)))$. Note that $\bar{a}(ab) = (t(a) - a)(ab) = a((t(a) - a)b) = a(\bar{a}b)$, hence $n(ab) = \frac{1}{2} t(\bar{b}(\bar{a}(ab))) = \frac{1}{2} t(\bar{b}(a(\bar{a}b))) = n(\bar{a}b)$. Moreover, $n(\bar{a}b) = n(\overline{\bar{a}b}) = n(\bar{b}a) = n(ba) = n(\overline{ba}) = n(\bar{a}\bar{b}) = n(a\bar{b})$.
    \item Follows immediately from item~(1) and the fact that the norm on $\A_n$ is anisotropic.
    \item Let $a \in Z(\A_n)$, that is, $ab = 0$ for some $b \in \A_n$, $b \neq 0$. According to item~(2), $ab = \bar{a}b = 0$, so $t(a)b = (a + \bar{a})b = 0$. Since $t(a) \in \mathbb{F}$ and $b \neq 0$, we obtain $t(a) = 0$.
    \item It follows from Proposition~\ref{proposition:new-pairs} that if $a = (a_1,a_2) \in Z(\A_n)$, then $\til{a} = (\gamma_{n-1} a_2, a_1) \in Z(\A_n)$. Therefore, by item~(3), we have $t(a) = t(\til{a}) = 0$, that is, $t(a_1) = t(a_2) = 0$.
    \item If $a = 0$ or $b = 0$, then $ab = a\til{b} = 0$. Otherwise, by item~(4), $a$ and~$b$ are doubly pure, so the required statement follows from the first equality in Proposition~\ref{proposition:new-pairs}. \qedhere
\end{enumerate}
\end{proof}

\begin{corollary} \label{corollary:zero-divisors-symmetry}
Let $\A_n$ be a Cayley--Dickson algebra with anisotropic norm. Then $\Gamma_Z(\A_n)$ can be obtained from $\Gamma_O(\A_n)$ by replacing every undirected edge with a pair of directed edges.
\end{corollary}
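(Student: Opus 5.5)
Since the vertex sets coincide (by Corollary~\ref{corollary:two-sided-zero-divisors}, $Z(\A_n) = Z_{LR}(\A_n)$), the proof reduces to one claim: for $a,b \in Z(\A_n)$, the equality $ab = 0$ holds if and only if $ba = 0$. Once this is shown, every directed edge $([a],[b])$ of $\Gamma_Z(\A_n)$ comes with the reverse edge $([b],[a])$. Moreover, $ab=ba=0$ is exactly adjacency in $\Gamma_O(\A_n)$. So $\Gamma_Z(\A_n)$ is obtained from $\Gamma_O(\A_n)$ by replacing each undirected edge with a pair of opposite directed edges.

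The key step is Lemma~\ref{lemma:moreno-zero-divisor-conditions}(2). Because the norm is anisotropic, the elements $ab$ and $ba$ are zero or nonzero simultaneously. Hence $ab = 0$ implies $ba = 0$, and conversely. This gives the required symmetry of edges with no further work.

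For completeness, I would also check how the loops and distinctness conventions interact with this. Both graphs are defined on distinct vertices $[a] \neq [b]$, so the symmetry argument is needed only for distinct lines, and it applies there verbatim. One could also remark that, by Lemma~\ref{lemma:moreno-zero-divisor-conditions}(3), all zero divisors are pure. This is consistent with Proposition~\ref{proposition:orthogonality-condition}(2), but it is not needed, since Lemma~\ref{lemma:moreno-zero-divisor-conditions}(2) already suffices.

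I do not expect any real obstacle. The only point requiring care is confirming that the vertex sets $\mathbb{P}(Z(\A_n))$ and $\mathbb{P}(Z_{LR}(\A_n))$ coincide, and Corollary~\ref{corollary:two-sided-zero-divisors} gives exactly that.
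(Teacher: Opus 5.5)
Your proposal is correct and follows the paper's proof, which simply cites Lemma~\ref{lemma:moreno-zero-divisor-conditions}(2) to get $ab = 0 \Leftrightarrow ba = 0$. Your explicit check that the vertex sets coincide via Corollary~\ref{corollary:two-sided-zero-divisors} is a correct addition that the paper leaves implicit; it also follows directly from the same lemma.
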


\begin{proof}
Follows immediately from Lemma~\ref{lemma:moreno-zero-divisor-conditions}(2).
\end{proof}

\begin{theorem} \label{theorem:C-equivalent}
Let $\A_n$ be a Cayley--Dickson algebra with anisotropic norm, $a, b \in \A_n \setminus \{ 0 \}$, $t(a) = t(b) = 0$. If $a$ and $b$ are $C$-equivalent, i.e, $C_{\A_n}(a) = C_{\A_n}(b)$, then $[a] = [b]$.
\end{theorem}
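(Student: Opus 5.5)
Since $\A_n$ has anisotropic norm, every nonzero pure element $x$ has $n(x)\neq 0$, so Lemma~\ref{lemma:A_n-commutativity-through-orthogonality}(2) gives
$$
C_{\A_n}(x) = \mathbb{F} \oplus \mathbb{F}x \oplus O_{\A_n}(x).
$$
The plan is to recover the line $[x]$ intrinsically from the subspace $C_{\A_n}(x)$. Orthogonality is with respect to $\langle\cdot,\cdot\rangle$, which is anisotropic, so every orthogonal complement below is a genuine complement.

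The natural candidate is to split the pure part of the centralizer by orthogonalizers. Set $W = \Im(C_{\A_n}(x)) = \mathbb{F}x \oplus O_{\A_n}(x)$. The first key step is to show that $\mathbb{F}x$ is determined by $W$: it should be the set of $y\in W$ that commute with every element of $W$. Indeed, $x$ commutes with $x$, and $x$ commutes with $O_{\A_n}(x)$ since those elements are orthogonal to $x$. Conversely, let $y=\alpha x+z\in W$ with $z\in O_{\A_n}(x)$, and suppose $y$ commutes with all of $W$. Then $z$ commutes with all of $O_{\A_n}(x)$.

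We need this to force $z=0$. If $z\neq 0$, then $z$ is pure with $n(z)\neq0$, so $z^2=-n(z)\neq 0$ and $z$ is not orthogonal to itself. The remaining question is whether $z$ could commute with all of $O_{\A_n}(x)$. Commuting is weaker than orthogonality, so I would try a cleaner invariant instead.

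The intrinsic characterization I would use is
$$
\mathbb{F}x = \{\, y \in W \;|\; yw=0 \text{ for all } w\in W \text{ with } w\perp y \,\}.
$$
This is also awkward, and I now think the simplest route avoids describing $\mathbb{F}x$ inside $W$ altogether.

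Assume $C(a)=C(b)$. Then $b\in C(b)=C(a)$, so $b=\lambda a + z$ with $z\in O(a)$, since $b$ is pure. Symmetrically $a=\mu b+w$ with $w\in O(b)$. If $z=0$ we are done, so suppose $z\neq0$. Since $z\in O(a)$, $z$ commutes with $a$ and is orthogonal to it. Then $z$ also commutes with $b$, because $bz=\lambda az+z^2=z^2=zb$. Hence $z\in C(b)$ anyway, which gives no contradiction directly.

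Instead, use the dimension count together with the structure of the decomposition. By Lemma~\ref{lemma:A_n-commutativity-through-orthogonality}(2), $O(x)=C(x)\cap\{e_0,x\}^{\perp}$. Note that $O(a)\subseteq C(a)=C(b)$ and $O(a)\perp e_0$, and similarly for $b$. Let $U=\Im C(a)=\Im C(b)$. Inside $U$ we then have the two hyperplanes
$$
O(a)=U\cap a^{\perp}, \qquad O(b)=U\cap b^{\perp}.
$$
The main step is to show that $O(a)=O(b)$; once we have it, $a^{\perp}\cap U=b^{\perp}\cap U$ inside $U$, with $a,b\in U$. Anisotropy then forces $[a]=[b]$, because the form restricted to $U$ is nondegenerate.

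To get $O(a)=O(b)$, take $z\in O(a)$ and write $z=\beta b+u$ with $u\in O(b)$. Then $za=0$, and $z$ commutes with $b$. Compute $bz=\beta b^2=-\beta n(b)$ and, by Lemma~\ref{lemma:moreno-zero-divisor-conditions}(2) applied to $b$ and $u$, use this to relate it back to $a$. Write $a=\mu b+w$ with $w\in O(b)$; then
$$
0=za=(\beta b+u)(\mu b+w)=-\beta\mu n(b)+\beta bw+\mu ub+uw=-\beta\mu n(b)+uw.
$$
Taking traces gives $\beta\mu n(b)=-\langle u,w\rangle$-type terms, which are zero because $u,w\in O(b)$ generically need not be orthogonal. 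This is the \textbf{main obstacle}: controlling $uw$ for $u,w\in O(b)$. I would try to exploit the symmetric choice. Using $\langle z,a\rangle=0$ and expanding gives $\beta\mu n(b)+\langle u,w\rangle=0$, and combining it with the trace identity $t(uw)=-2\langle u,w\rangle$ yields $\beta\mu n(b)=0$. Since $\mu\neq0$ (otherwise $a\in O(b)$, giving $ab=0$ while $a\in C(a)=C(b)$), we get $\beta=0$ and $z\in O(b)$. By symmetry $O(a)=O(b)$, and the argument concludes as above.
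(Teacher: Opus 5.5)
\textbf{Verdict: there is a genuine gap, at exactly the step you flag as the main obstacle.}

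\textbf{The key step is circular.} The equation you get from $\langle z,a\rangle=0$ and the equation you get by taking the trace of $uw=\beta\mu n(b)$ are the same equation. Both read $\beta\mu n(b)+\langle u,w\rangle=0$. This is forced: for pure elements $\langle z,a\rangle=-\tfrac12 t(za)$, so $\langle z,a\rangle=0$ is already contained in $za=0$, and $za=0$ is the relation you used to get $uw=\beta\mu n(b)$. Combining them therefore yields nothing, and $\beta\mu n(b)=0$ does not follow.

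\textbf{The reduction to $O(a)=O(b)$ is not a simplification.} Write $b=\lambda a+x$ with $x\in O(a)$, as in your first decomposition. Then $x\in O(a)$, but $\langle x,b\rangle=n(x)$. So $O(a)=O(b)$ already forces $n(x)=0$, hence $x=0$ by anisotropy. Proving $O(a)=O(b)$ is thus the whole theorem.

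\textbf{Smaller issue.} Your justification of $\mu\neq0$ is not valid. If $\mu=0$ then $a\in O(b)$, and $ab=0$ together with $a\in C(b)$ is no contradiction by itself. This case, $b\in O(a)$, is itself one of the cases the theorem must exclude.

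\textbf{What is missing.} You need an element of $O(a)$ that does not commute with $x=b-\lambda a$. The decomposition $C(y)=\mathbb{F}\oplus\mathbb{F}y\oplus O(y)$ and the anisotropy of $\langle\cdot,\cdot\rangle$ do not provide one. The paper supplies it with the tilde construction, as follows.
\begin{enumerate}
\item Since $C(a)=C(b)$, anything commuting with $a$ commutes with $x$, so $C(a)\subseteq C(x)$.
\item Suppose $x\neq0$. By Lemma~\ref{lemma:moreno-tilde}(5) and~(2), $ax=0$ gives $a\til{x}=\til{x}a=0$. Hence $\til{x}\in O(a)\subseteq C(x)$.
\item By Lemma~\ref{lemma:moreno-doubly-pure}(4), $x$ is doubly pure. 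Lemma~\ref{lemma:quaternionic-subalgebra} then gives $\til{x}x=-x\til{x}=n(x)\til{e}_0$, which is nonzero because $n(x)\neq0$. So $\til{x}$ does not commute with $x$, a contradiction.
\end{enumerate}
This argument also covers the case $\mu=0$ you tried to exclude. Once $x=0$ is shown this way, your final observation that equal hyperplanes $U\cap a^{\perp}=U\cap b^{\perp}$ force $[a]=[b]$ is correct but no longer needed.
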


\begin{proof}
By Lemma~\ref{lemma:A_n-commutativity-through-orthogonality}(2), $C_{\A_n}(a) = \mathbb{F} \oplus \mathbb{F}a \oplus O_{\A_n}(a)$. Moreover, by Proposition~\ref{proposition:orthogonality-condition}, for all $x \in O_{\A_n}(a)$ it holds that $t(x) = 0$. Since $b \in C_{\A_n}(b) = C_{\A_n}(a)$ and $t(b) = 0$, we have $b = ka + x$ for some $k \in \mathbb{F}$ and $x \in O_{\A_n}(a)$. Therefore, $C_{\A_n}(a) \subseteq C_{\A_n}(x)$.

Assume to the contrary that $[a] \neq [b]$, that is, $x \neq 0$. Since the norm on $\A_n$ is anisotropic, this means that $n(x) \neq 0$. By Lemma~\ref{lemma:moreno-tilde}(5), $ax = 0$ implies $a\til{x} = 0$, so $\til{x} \in O_{\A_n}(a)$. Hence $\til{x} \in C_{\A_n}(a) \subseteq C_{\A_n}(x)$. Moreover, by Lemma~\ref{lemma:moreno-doubly-pure}(4), the element $x$ is doubly pure. It follows from Lemma~\ref{lemma:quaternionic-subalgebra} that $\til{x}x = -x\til{x} = n(x)\til{e}_0 \neq 0$. We obtain a contradiction with the equality $\til{x}x = x\til{x}$.
\end{proof}

\begin{corollary} \label{corollary:C-equivalent}
Let $\A_n$ be a Cayley--Dickson algebra with anisotropic norm, $a, b \in \A_n$, $\Im(a) \neq 0$, $\Im(b) \neq 0$. If $a$ and $b$ are $C$-equivalent, then $[\Im(a)] = [\Im(b)]$.
\end{corollary}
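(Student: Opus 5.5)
We reduce the statement to Theorem~\ref{theorem:C-equivalent} by passing from $a,b$ to their imaginary parts. Since the involution on $\A_n$ is regular, every element splits as $a = \frac{t(a)}{2} + \Im(a)$ with $\frac{t(a)}{2} \in \mathbb{F}$ and $t(\Im(a)) = 0$.

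First we show that $C_{\A_n}(a) = C_{\A_n}(\Im(a))$ for every $a \in \A_n$. For any $y \in \A_n$,
\[
ay - ya = \Im(a)y - y\Im(a),
\]
because the scalar part $\frac{t(a)}{2}e_0$ commutes with everything. Hence $y$ commutes with $a$ if and only if it commutes with $\Im(a)$. The same holds for $b$.

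Now suppose $a$ and $b$ are $C$-equivalent. By the first step,
\[
C_{\A_n}(\Im(a)) = C_{\A_n}(a) = C_{\A_n}(b) = C_{\A_n}(\Im(b)).
\]
The elements $\Im(a)$ and $\Im(b)$ are nonzero by hypothesis. They are pure, since $t(\Im(a)) = \frac{1}{2}(t(a) - t(\bar{a})) = 0$ and likewise for $b$. So Theorem~\ref{theorem:C-equivalent} applies to $\Im(a)$ and $\Im(b)$ and gives $[\Im(a)] = [\Im(b)]$.

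No step here is a real obstacle. The only points to check are that $\Im(a)$ and $\Im(b)$ satisfy the hypotheses of Theorem~\ref{theorem:C-equivalent} (nonzero, pure, in an algebra with anisotropic norm), and these hold by assumption and the computation above.

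The converse is also immediate, which gives the criterion announced in the introduction. If $\Im(b) = \lambda \Im(a)$ with $\lambda \neq 0$, then $C_{\A_n}(\Im(b)) = C_{\A_n}(\Im(a))$, and the first step yields $C_{\A_n}(a) = C_{\A_n}(b)$.
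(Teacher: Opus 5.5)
Your proposal is correct and matches the paper's proof: the paper also deduces the corollary directly from Theorem~\ref{theorem:C-equivalent} using $C_{\A_n}(a) = C_{\A_n}(\Im(a))$ and $C_{\A_n}(b) = C_{\A_n}(\Im(b))$. You additionally justify these equalities by noting that the scalar part $\frac{t(a)}{2}$ is central, and you check the converse, which gives the full criterion announced in the introduction.
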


\begin{proof}
Follows immediately from Theorem~\ref{theorem:C-equivalent}, since $C_{\A_n}(a) = C_{\A_n}(\Im(a))$ and $C_{\A_n}(b) = C_{\A_n}(\Im(b))$.
\end{proof}

\begin{remark}
Let $\A_n$ be a Cayley--Dickson algebra with anisotropic norm, $a \in Z(\A_n)$. Then, by Lemmas~\ref{lemma:tilde-properties}(3) and~\ref{lemma:moreno-tilde}(4--5), the elements $a$ and $\til{a}$ are doubly pure and linearly independent but $O$-equivalent. Besides, if $b \in Z(\A_{n-1})$, then $O_{\A_n}((b,0)) = O_{\A_n}((0,b)) = \{ (c,d) \; | \; c, d \in O_{\A_{n-1}}(b) \}$, hence $(b,0)$ and $(0,b)$ are also doubly pure and linearly independent but $O$-equivalent.
\end{remark}

Lemma~\ref{lemma:A_n-alternative-properties}(1--5) has been proved by Moreno in~\cite[pp. 25--27]{moreno} for real algebras of the main sequence. In his proof Moreno assumes that $c$ and $d$ are alternative elements in $\Main_n$, however, it is valid verbatim for the case when the elements $c,d \in \Main_n$ alternate with $a,b \in \Main_n$ only. Moreno has also proved Lemma~\ref{lemma:alternative-orthogonal-system}(6), but his proof uses that fact that the elements $c$ and $d$ alternate strongly.

Recall that the anti-associator of the elements $a,b,c$ in $\A$ is $\{ a,b,c \} = (ab)c + a(bc)$.

\begin{lemma} \label{lemma:A_n-alternative-properties} \label{lemma:alternative-orthogonal-system}
Let $\A_{n+1}$ be a Cayley--Dickson algebra with anisotropic norm, the elements $c,d \in \A_n$ alternate with $a,b \in \A_n$, $(a,b),(c,d) \in Z(\A_{n+1})$, $(a,b)(c,d) = 0$. Then
\begin{enumerate}[\rm (1)]
    \item $t(a) = t(b) = t(c) = t(d) = 0$;
    \item $n(a) = -\gamma_n n(b)$ and $n(c) = -\gamma_n n(d)$, i.e., $\chi((a,b)) = \chi((c,d)) = -1$;
    \item $[c,a,d] = 2n(c)b$, $[c,b,d] = -2n(d)a$;
    \item $\{ c,a,d \} = \{ c,b,d \} = 0$;
    \item $a \perp b$;
    \item $a, b \in \spn(e_0,c,d,cd)^{\perp}$;
    \item $(c,d)(ac,ad) = 0$.
\end{enumerate}
\end{lemma}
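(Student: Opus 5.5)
The plan is to reduce everything to the two component equations of $(a,b)(c,d)=(ac+\gamma_n\bar d b,\ da+b\bar c)=0$, namely
$$
ac=-\gamma_n\bar d b,\qquad da=-b\bar c .
$$
I would combine these with the alternation identities and with the skew-symmetry trick used in Lemma~\ref{lemma:zero-self-associator}.

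For (1), $(a,b),(c,d)\in Z(\A_{n+1})$, so Lemma~\ref{lemma:moreno-doubly-pure}(4) shows they are doubly pure. Hence $\bar a=-a$ and similarly for $b,c,d$, and the two equations become $ac=\gamma_n db$ and $da=bc$.

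For (2), both elements are nonzero and the norm is anisotropic, so $n(a)\neq0$ or $n(b)\neq0$, and likewise for $c,d$. Lemma~\ref{lemma:condition-asterisk} then gives $\chi((a,b))=\chi((c,d))=\pm1$. If $\chi=1$, then $n((a,b))=n(a)-\gamma_n n(b)=0$ with $(a,b)\neq0$, which contradicts anisotropy. So $\chi=-1$.

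For (3), I use that $[c,c,a]=0$. Conjugating and using purity gives $[a,c,c]=0$ as well, so $c(ca)=(ac)c=-n(c)a$. Similarly $d(da)=(ad)d=-n(d)a$, and the same holds with $b$ in place of $a$. Substituting $ac=\gamma_n db$ and $da=bc$ gives
$$
\gamma_n(db)c=-n(c)a,\qquad d(bc)=-n(d)a .
$$
Subtracting, and using $n(c)=-\gamma_n n(d)$, I get $[d,b,c]=2n(d)a$. Flexibility then yields $[c,b,d]=-2n(d)a$. The identity $[c,a,d]=2n(c)b$ is obtained the same way: multiply $ac=\gamma_n db$ and $da=bc$ by $c$ and $d$ on the appropriate sides and isolate $b$. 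Item (4) is the analogous computation for the sums $(ca)d+c(ad)$. I rewrite $ca=-ac-2\langle a,c\rangle$ and $ad=-da-2\langle a,d\rangle$, and I expect the scalar terms to cancel. If they do not, I would postpone (4) until after (6), since (6) kills those scalars.

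For (5), consider $S(x)=[c,x,d]=R_dL_c(x)-L_cR_d(x)$. As in Lemma~\ref{lemma:zero-self-associator}, $S$ is skew-symmetric for $\langle\cdot,\cdot\rangle$ because $c$ and $d$ are pure. Hence $\langle b,S(b)\rangle=0$. By (3) this reads $-2n(d)\langle b,a\rangle=0$, and since $n(d)\neq0$ we get $a\perp b$.

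For (6), $a\perp e_0$ already holds by (1). For $a\perp c$, I would write $a=-S(b)/(2n(d))$ and compute $\langle[d,b,c],c\rangle$ directly using Lemma~\ref{lemma:inner-product-movement}(2):
$$
\langle(db)c,c\rangle=n(c)\langle db,e_0\rangle,\qquad \langle d(bc),c\rangle=\langle bc,\bar d c\rangle .
$$
I would then compare with the relation $\langle a,c\rangle=\gamma_n\langle d,b\rangle$ that comes from taking traces in $ac=\gamma_n db$, and similarly $\langle a,d\rangle=\langle b,c\rangle$ from $da=bc$. These linear relations, together with the analogous ones obtained from $[c,a,d]=2n(c)b$, should force $\langle a,c\rangle=\langle a,d\rangle=0$. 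I then treat $\langle a,cd\rangle=\langle \bar c a,d\rangle$ the same way. This step is the main obstacle. Moreno's argument uses $S(c)=[c,c,d]=0$, which needs $c$ and $d$ to alternate with each other, and that is not assumed here. So I must avoid it and rely only on the alternation of $c,d$ with $a,b$ and on the trace relations.

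Finally, for (7) I expand
$$
(c,d)(ac,ad)=\bigl(c(ac)+\gamma_n\overline{ad}\,d,\ (ad)c+d\,\overline{ac}\bigr).
$$
By (6), $\overline{ad}=da$ and $\overline{ac}=ca$. The first component becomes $-n(c)a+\gamma_n(da)d=-n(c)a-\gamma_n n(d)a=0$ by (2). The second is $(ad)c+d(ca)$, which I would reduce using (4) and the relations from (3).
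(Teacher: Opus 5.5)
Your items (1)--(3) and (5) are essentially the paper's argument. Your route to (4) also works. After rewriting, the leftover terms are $2(\gamma_n\langle b,d\rangle-\langle a,c\rangle)d+2(\langle b,c\rangle-\langle a,d\rangle)c$, and these vanish by the trace relations $\langle a,c\rangle=\gamma_n\langle b,d\rangle$, $\langle a,d\rangle=\langle b,c\rangle$. A shorter path, which is what the paper does: your two displayed identities already give $\{d,b,c\}=(db)c+d(bc)=0$, and conjugation turns this into $\{c,b,d\}=0$.

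The genuine gap is (6), on which your (7) also rests. You stop before evaluating $\langle bc,\bar dc\rangle$ and only assert that the relations ``should force'' $\langle a,c\rangle=\langle a,d\rangle=0$. The trace relations alone cannot do this, since they only tie $\langle a,c\rangle$ to $\langle b,d\rangle$ and $\langle a,d\rangle$ to $\langle b,c\rangle$.

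The paper instead applies Lemma~\ref{lemma:A_n-next-pair}(1) with $\chi=-1$. Since $\overline{ac}=ca$, this gives $(c,d)(ca,da)=0$, so $(ca,da)$ is zero or a zero divisor, hence doubly pure by Lemma~\ref{lemma:moreno-doubly-pure}(4). Thus $t(ca)=t(da)=0$, i.e.\ $a\perp c$ and $a\perp d$, and $ca=\gamma_n bd$, $da=bc$ then give $b\perp c$, $b\perp d$. Now $a,b$ anticommute with $c,d$. Hence $\langle a,cd\rangle=\langle da,c\rangle=\langle bc,c\rangle=0$ and $\langle b,cd\rangle=\langle bc,d\rangle=\langle da,d\rangle=0$. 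Item (7) is then immediate, because $(ca,da)=-(ac,ad)$.

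That said, your own computation does close, and the obstacle you attribute to Moreno's argument is not real. For pure $c$ the map $x\mapsto[c,c,x]=-n(c)x-L_c^2x$ is self-adjoint for $\langle\cdot,\cdot\rangle$, because $L_c$ is skew. So $\langle b,S(c)\rangle=\langle b,[c,c,d]\rangle=\langle [c,c,b],d\rangle=0$, using only that $c$ alternates with $b$. Hence $-2n(d)\langle a,c\rangle=\langle S(b),c\rangle=-\langle b,S(c)\rangle=0$.

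Concretely, in your display $\langle bc,\bar dc\rangle=\langle (bc)\bar c,\bar d\rangle=-n(c)\langle b,d\rangle$ and $n(c)\langle db,e_0\rangle=-n(c)\langle d,b\rangle$, so $\langle[d,b,c],c\rangle=0$ outright. Likewise $\langle[d,b,c],d\rangle=0$ (using $[d,d,b]=0$) gives $a\perp d$, and the trace relations then give $b\perp c,d$. This route avoids Lemma~\ref{lemma:A_n-next-pair} and the auxiliary zero divisor $(ca,da)$ entirely.

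Finally, the intermediate values in your (7) have the wrong sign. Once $ac=-ca$ and $ad=-da$, one has $c(ac)=-c(ca)=n(c)a$ and $(da)d=-(ad)d=n(d)a$. So the first component is $(n(c)+\gamma_n n(d))a$, which still vanishes by (2).
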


\begin{proof}
\leavevmode
\begin{enumerate}[\rm (1)]
    \item Follows immediately from Lemma~\ref{lemma:moreno-doubly-pure}(4).
    \item Since $(a,b) \neq 0$ and $(c,d) \neq 0$, and the norm on $\A_n$ is anisotropic, we have $n(a) \neq 0$ or $n(b) \neq 0$, and also $n(c) \neq 0$ or $n(d) \neq 0$. By Lemma~\ref{lemma:condition-asterisk}, $\chi = \chi((a,b)) = \chi((c,d)) = \pm 1$. However, if $\chi = 1$, then $n((a,b)) = n((c,d)) = 0$, a contradiction with the fact that the norm on $\A_{n+1}$ is anisotropic. Therefore, $\chi = -1$.
    \item We have $(a,b)(c,d) = (ac + \gamma_n \bar{d}b, da + b\bar{c}) = (ac - \gamma_n db, da - bc) = 0$, hence $ac = \gamma_n db$ and $da = bc$. Thus
    \begin{align*}
        n(d)a &= \bar{d}(da) = -d(bc),\\
        n(d)a &= -\tfrac{1}{\gamma_n}n(c)a = -\tfrac{1}{\gamma_n}(ac)\bar{c} = \tfrac{1}{\gamma_n}(\gamma_n db)c = (db)c,\\
        n(c)b &= (bc)\bar{c} = -(da)c,\\
        n(c)b &= -\gamma_n n(d)b = -\bar{d}(\gamma_n db) = d(ac).
    \end{align*}
    Applying the involution to both sides of each equality, we obtain $n(d)a = -(cb)d = c(bd)$ and $n(c)b = -c(ad) = (ca)d$. Hence $[c,a,d] = 2n(c)b$, $[c,b,d] = -2n(d)a$ and $\{ c,a,d \} = \{ c,b,d \} = 0$, which proves also item~(4).
\setcounter{enumi}{4}
    \item Similarly to the proof of Lemma~\ref{lemma:zero-self-associator}, we consider a skew-symmetric linear operator $S: \A_n \to \A_n$ given by the formula $S(x) = [c,x,d]$ for all $x \in \A_n$. Then $a \perp S(a) = [c,a,d] = 2n(c)b$, so $a \perp b$.
    \item By item~(1), $e_0 \in \spn(a,b,c,d)^{\perp}$. We apply Lemma~\ref{lemma:A_n-next-pair}(1) to the elements $(a,b)$ and $(c,d)$ with $\chi = \chi((c,d)) = -1$ and use the equality $\overline{ac} = \bar{c}\bar{a} = ca$. Then $(c,d)(ca,da)=0$ and, by Lemma~\ref{lemma:moreno-doubly-pure}(4), $t(ca) = t(da) = 0$. It follows from Proposition~\ref{proposition:lambda-form} that $\langle a, c \rangle = - \langle a, \bar{c} \rangle = -\frac{1}{2}t(ca) = 0$ and, similarly, $\langle a, d \rangle = 0$, that is, $a \perp c$ and $a \perp d$. By using the equalities $ca = \gamma_n bd$ and $da = bc$, we obtain similarly that $b \perp c$ and $b \perp d$.
    
    It follows from Lemma~\ref{lemma:A_n-anticomm} that the elements $a,b$ anticommute with $c,d$. Then $ac = -ca$ and $ad = -da$, which proves immediately item~(7). It remains to note that, by Lemma~\ref{lemma:inner-product-movement}(2), 
    \begin{align*}
    \langle a, cd \rangle &= \langle a\bar{d}, c \rangle = \langle da, c \rangle = \langle bc, c \rangle = \langle b, c\bar{c} \rangle = n(c) \langle b, e_0 \rangle = 0,\\
    \langle b, cd \rangle &= \langle \bar{c}b, d \rangle = \langle bc, d \rangle = \langle da, d \rangle = \langle a, \bar{d}d \rangle = n(d) \langle a, e_0 \rangle = 0, 
    \end{align*}
    hence $a \perp cd$ and $b \perp cd$. \qedhere
\end{enumerate}
\end{proof}

We now generalize some results which were obtained in Subsection~4.2 of the author's paper~\cite{our_orthographs1} for the case of real algebras of the main sequence. We remark that they contained an inaccuracy, namely, the existed proof of pairwise orthogonality of the elements $a, b, c, d$ with respect to $\langle \cdot, \cdot \rangle$ used the fact that the elements $a, b, c, d$ alternate strongly pairwise, but all statements were formulated in the assumption that the elements $a, b$ alternate strongly with $c, d$ only (see~\cite[Corollary~4.4, Lemma~4.6]{our_orthographs1}).

In the statements~\ref{corollary:double-hexagon}--\ref{corollary:linear-independence} and in Figure~\ref{figure:double-hexagon} we assume that $\A_{n+1}$ is a Cayley--Dickson algebra with anisotropic norm, $(a,b),(c,d) \in Z(\A_{n+1})$, $(a,b)(c,d) = 0$, and the elements $a,b \in \A_n$ alternate strongly with $c,d \in \A_n$, i.e., $[x,x,y] = [y,y,x] = 0$ for $x \in \{ a, b \}$ and $y \in \{ c, d\}$.

\begin{corollary} \label{corollary:double-hexagon}
There exists the following $6$-cycle in $\Gamma_O(\A_{n+1})$:
$$
(a,b) \leftrightarrow (c,d) \leftrightarrow (ac,ad) \leftrightarrow (a,-b) \leftrightarrow (c,-d) \leftrightarrow (ac,-ad) \leftrightarrow (a,b).
$$
\end{corollary}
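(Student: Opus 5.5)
The plan is to obtain this cycle from the first hexagon of Theorem~\ref{theorem:A_n-double-hexagon}(3), identify its vertices with the ones written here, and then upgrade the directed edges of $\Gamma_Z(\A_{n+1})$ to undirected edges of $\Gamma_O(\A_{n+1})$ via Corollary~\ref{corollary:zero-divisors-symmetry}.

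First I check the hypotheses of Theorem~\ref{theorem:A_n-double-hexagon}. The pairs $(a,b)$ and $(c,d)$ are nonzero, since they are zero divisors, and the norm is anisotropic. Hence $n(a)\neq 0$ or $n(b)\neq 0$, and likewise $n(c)\neq 0$ or $n(d)\neq 0$. Strong alternativity implies alternativity, so Lemma~\ref{lemma:A_n-alternative-properties} applies. By its items~(1) and~(2), all of $a,b,c,d$ are pure and $\chi=\chi((a,b))=\chi((c,d))=-1$. Theorem~\ref{theorem:A_n-double-hexagon}(3) then yields the directed $6$-cycle
$$
(a,b)\to(c,d)\to(\overline{ac},da)\to(a,-b)\to(c,-d)\to(\overline{ac},-da)\to(a,b).
$$

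Next I rewrite the third and sixth vertices. Since $a,c$ are pure and $a\perp c$ by Lemma~\ref{lemma:A_n-alternative-properties}(6), Lemma~\ref{lemma:A_n-anticomm} gives $ac=-ca$. Therefore
$$
\overline{ac}=\bar c\bar a=ca=-ac .
$$
Similarly, $a\perp d$ gives $da=-ad$. Thus $(\overline{ac},da)=-(ac,ad)$ and $(\overline{ac},-da)=-(ac,-ad)$. These define the same vertices in $\mathbb{P}(Z(\A_{n+1}))$ as $(ac,ad)$ and $(ac,-ad)$, so we have exactly the cycle of the statement, with directed edges.

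Finally, Lemma~\ref{lemma:moreno-zero-divisor-conditions}(2) says that $xy=0$ implies $yx=0$ in $\A_{n+1}$. So each directed edge is an orthogonality, and Corollary~\ref{corollary:zero-divisors-symmetry} makes the cycle a cycle in $\Gamma_O(\A_{n+1})$. Alternatively, all vertices are pure, since $t(ac)=0$ because $ac\perp e_0$ follows from $a\perp c$; Proposition~\ref{proposition:orthogonality-condition}(2) then gives the same conclusion.

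It remains to check that the six vertices are distinct lines, so that this is a genuine $6$-cycle. As in the proof of Corollary~\ref{corollary:no-chords-in-hexagons}, anisotropy and Lemma~\ref{lemma:alternative-elements-are-normed} give $n(ac)=n(a)n(c)\neq0$ whenever $a\neq 0$. The main obstacle is the degenerate case where some component vanishes. Here I would use $n(a)=-\gamma_n n(b)$: it shows $a=0$ iff $b=0$, which is impossible because $(a,b)\neq 0$. The same argument shows $c,d\neq0$. With all components nonzero, distinctness follows: $(x,y)$ and $(x,-y)$ are not proportional, and vertices with nonproportional first components are distinct.
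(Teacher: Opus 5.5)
Your proposal is correct and is essentially the paper's proof: take the first directed hexagon of Corollary~\ref{corollary:A_n-double-hexagon} (equivalently Theorem~\ref{theorem:A_n-double-hexagon}(3)) with $\chi=-1$, rewrite $\overline{ac}=-ac$ and $da=-ad$ using the orthogonality and anticommutation from Lemma~\ref{lemma:alternative-orthogonal-system}(6), and pass from $\Gamma_Z(\A_{n+1})$ to $\Gamma_O(\A_{n+1})$ via Corollary~\ref{corollary:zero-divisors-symmetry}. Your extra distinctness check, which the paper defers to Corollary~\ref{corollary:linear-independence}, also needs that $a$, $c$, $ac$ are pairwise non-proportional; this holds because they are pairwise $\langle\cdot,\cdot\rangle$-orthogonal with nonzero norms.
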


\begin{proof}
We use Corollary~\ref{corollary:A_n-double-hexagon} for $\chi = \chi((c,d)) = -1$. By Lemma~\ref{lemma:alternative-orthogonal-system}(6), we have $\overline{ac} = -ac$ and $da = -ad$. Finally, Corollary~\ref{corollary:zero-divisors-symmetry} implies that directed edges of the hexagon in $\Gamma_Z(\A_{n+1})$ correspond to undirected ones in $\Gamma_O(\A_{n+1})$.
\end{proof}

\begin{descript} \label{description:double-hexagon}
By using Lemma~\ref{lemma:moreno-tilde}(5) and Corollary~\ref{corollary:double-hexagon} we obtain a subgraph of $\Gamma_O(\A_{n+1})$ which we call a {\em double hexagon}. It is depicted in Figure~\ref{figure:double-hexagon}. A double hexagon consists of $6$ bipartite graphs $K_{2,2}$ glued together. Note that it contains all hexagons from Figure~\ref{figure:directed-hexagon}.
\end{descript}

\begin{figure}[ht]
\centering
\includegraphics[width=0.74\linewidth]{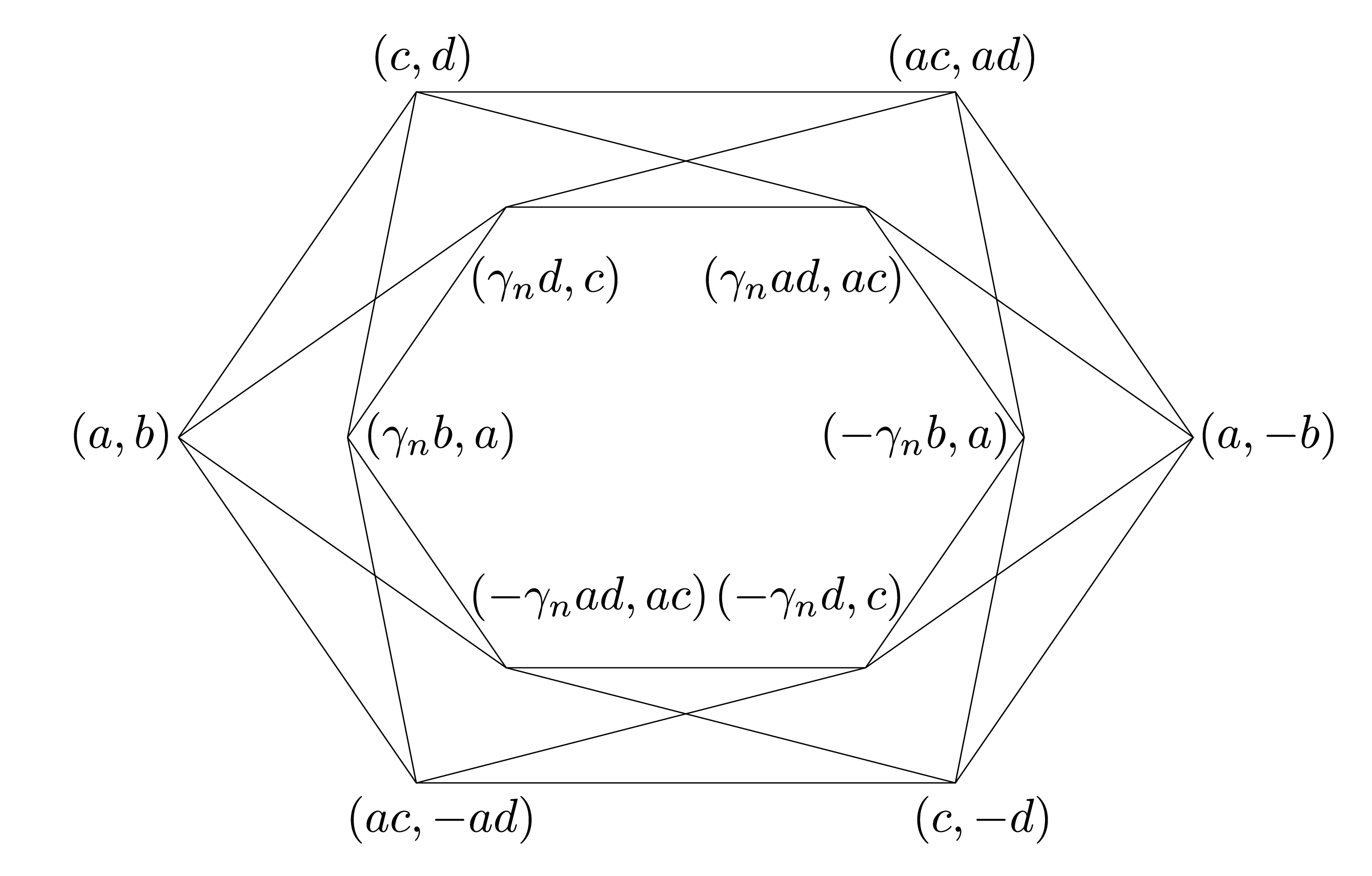}
\caption{\label{figure:double-hexagon} A double hexagon.}
\end{figure}

\begin{lemma} \label{lemma:orthonormal-system}
\leavevmode
\begin{enumerate}[\rm (1)]
\item The elements $e_0,a,b,c,d$ are orthogonal with respect to $\langle \cdot, \cdot \rangle$.
\item The elements $e_0,a,b,c,d,ac,ad$ are orthogonal with respect to $\langle \cdot, \cdot \rangle$.
\end{enumerate}
\end{lemma}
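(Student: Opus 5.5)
Item (1) is almost a restatement of Lemma~\ref{lemma:alternative-orthogonal-system}. Since $a,b$ alternate strongly with $c,d$, in particular $c,d$ alternate with $a,b$, so that lemma applies. Item (1) of Lemma~\ref{lemma:alternative-orthogonal-system} gives $e_0 \perp a,b,c,d$. Item (5) gives $a \perp b$, and item (6) gives $a,b \perp c,d$. The only pair left is $c \perp d$. For this I will apply Lemma~\ref{lemma:alternative-orthogonal-system} again with the roles swapped. By Lemma~\ref{lemma:moreno-zero-divisor-conditions}(2), $(a,b)(c,d)=0$ implies $(c,d)(a,b)=0$ in an algebra with anisotropic norm. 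Strong alternativity means $a,b$ also alternate with $c,d$, so the hypotheses hold with $(c,d)$ as the left factor. Item (5) then yields $c \perp d$. This is exactly where strong alternativity is needed, which is the point of the correction mentioned before the lemma.

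For item (2), I first establish $t(ac)=t(ad)=0$, so that $e_0 \perp ac, ad$. By Corollary~\ref{corollary:double-hexagon}, $(ac,ad)$ is a zero divisor, so it is doubly pure by Lemma~\ref{lemma:moreno-doubly-pure}(4). Orthogonality of $ac$ and $ad$ to $a,b,c,d$ comes from Lemma~\ref{lemma:inner-product-movement}(2), using the relations $ac=\gamma_n db$ and $da=bc$ from the proof of Lemma~\ref{lemma:alternative-orthogonal-system}(3), anticommutativity of pairwise orthogonal pure elements (Lemma~\ref{lemma:A_n-anticomm}), and item (1). For example:
\begin{itemize}
\item $\langle ac, a\rangle = \langle c, \bar a a\rangle = n(a)\langle c,e_0\rangle=0$;
\item $\langle ac, c\rangle = \langle a, c\bar c\rangle=0$;
\item $\langle ac, b\rangle = \gamma_n\langle db, b\rangle = \gamma_n n(b)\langle d,e_0\rangle=0$;
\item $\langle ac, d\rangle = \gamma_n\langle db,d\rangle = \gamma_n\langle b,\bar d d\rangle=0$.
\end{itemize}
The same manipulations, using $ad=-da=-bc$, give $ad\perp a,b,c,d$.

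The main obstacle is $ac \perp ad$. I plan to compute
\[
\langle ac, ad\rangle = \langle \bar a(ac), d\rangle = \langle n(a)c, d\rangle = 0,
\]
where the middle step uses that $a$ alternates with $c$, so $\bar a(ac)=(\bar a a)c$. The final equality is $c\perp d$ from item (1). This step depends on item (1), and therefore on strong alternativity.
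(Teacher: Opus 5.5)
Your proof is correct. Item (1) is argued exactly as in the paper: Lemma~\ref{lemma:alternative-orthogonal-system}(1),(5),(6) applied to $(a,b)(c,d)=0$, and again to $(c,d)(a,b)=0$. The second edge comes from Lemma~\ref{lemma:moreno-zero-divisor-conditions}(2), and strong alternativity supplies the swapped alternativity hypothesis.

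For item (2) you take a genuinely different route.

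\textbf{The paper's route.} It does no new computation. It invokes Lemma~\ref{lemma:strongly-alternative-system} to know that $ac,ad$ alternate strongly with $a,b,c,d$. It then reads off from Corollary~\ref{corollary:double-hexagon} the further edges $(c,d)(ac,ad)=0$ and $(ac,ad)(a,-b)=0$. Applying item (1) to each of the three edges $(a,b)(c,d)$, $(c,d)(ac,ad)$, $(ac,ad)(a,-b)$ covers every pair among $e_0,a,b,c,d,ac,ad$.

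\textbf{Your route.} You check each new pair directly from the component relations $ac=\gamma_n db$ and $da=bc$, together with the movement rule of Lemma~\ref{lemma:inner-product-movement}(2), item (1), and the single identity $\bar a(ac)=n(a)c$.

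\textbf{Comparison.} The paper's argument is shorter once the machinery is in place, and it shows the orthogonality as a consequence of the hexagon's symmetry. However, it requires the strong alternativity of $ac,ad$ with $a,b,c,d$, and it re-applies the full Lemma~\ref{lemma:alternative-orthogonal-system} to new pairs. Your argument is more elementary: it never needs any alternativity property of $ac$ or $ad$. Its only alternativity input is $[a,a,c]=0$, which is already among the hypotheses.

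You can also drop Corollary~\ref{corollary:double-hexagon} from your argument entirely. Since $c,d$ are pure, $t(ac)=2\langle a,\bar c\rangle=-2\langle a,c\rangle=0$, and likewise $t(ad)=-2\langle a,d\rangle=0$, both by item (1).
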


\begin{proof}
\leavevmode
\begin{enumerate}[\rm (1)]
\item Follows immediately from Lemma~\ref{lemma:A_n-alternative-properties}(5--6), applied to pairs of elements $(a,b)(c,d)=0$ and $(c,d)(a,b)=0$.
\item By Lemma~\ref{lemma:strongly-alternative-system}, the elements $ac, ad$ alternate strongly with $a,b,c,d$. By Corollary~\ref{corollary:double-hexagon}, $(a,b)(c,d) = (c,d)(ac,ad) = (ac,ad)(a,-b) = 0$. It remains to use item~(1) thrice. \qedhere
\end{enumerate}
\end{proof}

\begin{corollary} \label{corollary:linear-independence}
All elements in the vertices of the double hexagon in Figure~\ref{figure:double-hexagon} are linearly independent.
\end{corollary}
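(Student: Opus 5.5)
The plan is to reduce the statement to Lemma~\ref{lemma:orthonormal-system}(2) by an explicit change of basis. First I list the twelve vertices of the double hexagon. Corollary~\ref{corollary:double-hexagon} gives six of them:
$$
(a,\pm b), \qquad (c,\pm d), \qquad (ac,\pm ad).
$$
Lemma~\ref{lemma:moreno-tilde}(5) adjoins their tilde-partners. Since $\til{(x,y)} = (\gamma_n y, x)$, these are
$$
(\pm\gamma_n b, a), \qquad (\pm\gamma_n d, c), \qquad (\pm\gamma_n ad, ac).
$$
All twelve elements lie in the span of the twelve vectors $(x,0)$ and $(0,x)$ with $x \in \{a,b,c,d,ac,ad\}$.

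Next I show that these twelve spanning vectors are linearly independent in $\A_{n+1}$. It suffices that $a,b,c,d,ac,ad$ are linearly independent in $\A_n$. By Lemma~\ref{lemma:orthonormal-system}(2) they are pairwise orthogonal with respect to $\langle\cdot,\cdot\rangle$, so independence follows once each has nonzero norm. Since $(a,b)\neq 0$, Lemma~\ref{lemma:A_n-alternative-properties}(2) gives $n(a) = -\gamma_n n(b)$, and anisotropy then forces $n(a), n(b) \neq 0$; the same argument gives $n(c), n(d) \neq 0$. Because $a$ alternates with $c$ and $d$, Lemma~\ref{lemma:alternative-elements-are-normed} gives $n(ac) = n(a)n(c) \neq 0$ and $n(ad) = n(a)n(d) \neq 0$. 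A Gram-matrix argument then finishes this step: a relation $\sum \lambda_x x = 0$ paired with each $x$ yields $\lambda_x n(x) = 0$, hence $\lambda_x = 0$.

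Finally I check that the twelve vertices are obtained from the twelve basis vectors by an invertible transformation. The transformation is block diagonal with three $4\times 4$ blocks, one for each pair $(x,y) \in \{(a,b),(c,d),(ac,ad)\}$. In each block the four vertices are $(x,\pm y)$ and $(\pm\gamma_n y, x)$. Using $\chrs\mathbb{F}\neq 2$, half-sums and half-differences recover $(x,0)$ and $(0,y)$ from the first two vertices. From the last two they recover $(0,x)$ and $\gamma_n(y,0)$, and since $\gamma_n \neq 0$ this gives $(y,0)$ as well. So each block is invertible, and the twelve vertices are linearly independent. In particular they represent twelve distinct vertices of $\Gamma_O(\A_{n+1})$.

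The only genuinely delicate point is the nonvanishing of the norms, especially $n(ac)$ and $n(ad)$. This is exactly where the alternativity hypothesis and anisotropy are used. Everything else is bookkeeping.
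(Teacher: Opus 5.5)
Your proof is correct and follows the paper's own argument. Pairwise orthogonality of $a,b,c,d,ac,ad$ comes from Lemma~\ref{lemma:orthonormal-system}(2), and the norms are nonzero, with $n(ac)=n(a)n(c)\neq 0$ and $n(ad)=n(a)n(d)\neq 0$ via Lemma~\ref{lemma:alternative-elements-are-normed}. You also spell out details the paper leaves implicit: why $a,b,c,d$ are all nonzero, and the invertible change of basis from the twelve vertices $(x,\pm y)$, $(\pm\gamma_n y,x)$ to the vectors $(x,0)$, $(0,x)$.
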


\begin{proof}
Follows immediately from Lemma~\ref{lemma:orthonormal-system}(2), since, by Lemma~\ref{lemma:alternative-elements-are-normed}, $n(ac) = n(a)n(c) \neq 0$ and $n(ad) = n(a)n(d) \neq 0$.
\end{proof}

Combining the results of Lemmas~\ref{lemma:strongly-alternative-system} and~\ref{lemma:orthonormal-system}(2), Corollaries~\ref{corollary:double-hexagon} and~\ref{corollary:linear-independence}, and Description~\ref{description:double-hexagon}, we obtain the following theorem.

\begin{theorem}
\label{theorem:double-hexagon}
Let $\A_{n+1}$ be a Cayley--Dickson algebra with anisotropic norm, the elements $a,b \in \A_n$ alternate strongly with $c,d \in \A_n$, $(a,b),(c,d) \in Z(\A_{n+1})$, $(a,b)(c,d) = 0$. Then
\begin{enumerate}[\rm (1)]
    \item The elements $ac,ad$ alternate strongly with $a,b,c,d$. \label{item:strong-alternativity}
    \item The elements $e_0,a,b,c,d,ac,ad$ are orthogonal with respect to $\langle \cdot, \cdot \rangle$. \label{item:orthonormal-system}
    \item There exists a subgraph of $\Gamma_O(\A_{n+1})$ which is depicted in Figure~\ref{figure:double-hexagon} and called a double hexagon. \label{item:double-hexagon}
    \item All elements in the vertices of the double hexagon are linearly independent. \label{item:linear-independence}
\end{enumerate}
\end{theorem}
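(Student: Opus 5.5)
The theorem collects facts already established in this section, so the plan is to assemble them and check that the hypotheses of each earlier result are met. Throughout, $\A_{n+1}$ has anisotropic norm, the elements $a,b$ alternate strongly with $c,d$, and $(a,b)(c,d)=0$ with both factors nonzero zero divisors.

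\emph{Item~(1).} This is exactly Lemma~\ref{lemma:strongly-alternative-system}, whose only hypotheses are $(a,b)(c,d)=0$ and strong alternation of $a,b$ with $c,d$. That lemma is stated for $ac,da$. By Lemma~\ref{lemma:alternative-orthogonal-system}(1),(6), the elements $a,d$ are pure and $a\perp d$, so Lemma~\ref{lemma:A_n-anticomm} gives $da=-ad$. Strong alternation with $ad$ then follows from strong alternation with $da$, since associators are linear and change sign under $x\mapsto -x$ in a single slot or, in quadratic slots, stay invariant.

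\emph{Item~(2).} I would apply Lemma~\ref{lemma:orthonormal-system}(2) directly. Its proof needs Lemma~\ref{lemma:A_n-alternative-properties}(5--6) for three products:
\begin{itemize}
\item $(a,b)(c,d)=0$;
\item $(c,d)(ac,ad)=0$;
\item $(ac,ad)(a,-b)=0$.
\end{itemize}
For each of these I must check that the second factor's components alternate with the first factor's components; this is provided by item~(1) together with the standing assumption. I must also check that each factor is a nonzero zero divisor. Nonvanishing follows because $n(ac)=n(a)n(c)$ by Lemma~\ref{lemma:alternative-elements-are-normed}, and by anisotropy together with $\chi=-1$ (Lemma~\ref{lemma:A_n-alternative-properties}(2)) we have $n(a),n(c)\neq0$.

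The reverse-direction products are needed for symmetry of the orthogonality relations. By Lemma~\ref{lemma:moreno-zero-divisor-conditions}(2), $xy=0$ implies $yx=0$, so they hold as well.

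\emph{Items~(3) and~(4).} Item~(3) is Corollary~\ref{corollary:double-hexagon} combined with Description~\ref{description:double-hexagon}. The hexagon edges come from Corollary~\ref{corollary:A_n-double-hexagon} with $\chi=-1$, and they are undirected by Corollary~\ref{corollary:zero-divisors-symmetry}. The extra $K_{2,2}$ edges come from Lemma~\ref{lemma:moreno-tilde}(5): $xy=0\iff x\til y=0$, and by symmetry also $\til x y=0$ and $\til x\til y=0$.

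Item~(4) is Corollary~\ref{corollary:linear-independence}. The vertices are pairs built from the mutually orthogonal system $e_0,a,b,c,d,ac,ad$ of nonzero-norm elements, together with their tilde images in $\A_{n+1}$. This is where I expect the main care to be needed, namely in checking that the tilde vertices are also orthogonal to the untilded ones and have nonzero norm. For orthogonality, $\til{(x,y)}=(\gamma_n y,x)$ is built from the same orthogonal system, so the first-component inner products pair elements of different types, e.g.\ $\langle x,\gamma_n y'\rangle$ with $x,y'$ distinct members of the system, which vanish. For nonzero norm, the anisotropic form on $\A_{n+1}$ makes every vertex anisotropic. A nondegenerate orthogonal family of nonzero-norm vectors is linearly independent, which yields~(4).
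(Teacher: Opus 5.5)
Your items (1)--(3) follow the paper exactly. The theorem is just the assembly of Lemma~\ref{lemma:strongly-alternative-system}, Lemma~\ref{lemma:orthonormal-system}(2), Corollary~\ref{corollary:double-hexagon} and Description~\ref{description:double-hexagon}. Your hypothesis checks (the identity $da=-ad$, the alternation directions, and $(ac,ad)\neq 0$) are correct.

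Item (4) has a gap in the justification. You argue that the twelve vertices are pairwise $\langle\cdot,\cdot\rangle$-orthogonal in $\A_{n+1}$. You single out only tilde-versus-untilded pairs as needing care, and you treat orthogonality among untilded vertices as a consequence of the orthogonal system $e_0,a,b,c,d,ac,ad$. That reasoning fails for vertices with the same base pair. For example, $\langle (a,b),(a,-b)\rangle = n(a)+\gamma_n n(b)$ and $\langle(\gamma_n b,a),(-\gamma_n b,a)\rangle=-\gamma_n\bigl(n(a)+\gamma_n n(b)\bigr)$. The same holds for the bases $(c,d)$ and $(ac,ad)$, and nothing in item (2) makes these vanish.

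They do vanish, but only because the characteristic is $-1$:
\begin{itemize}
\item $n(a)=-\gamma_n n(b)$ and $n(c)=-\gamma_n n(d)$ by Lemma~\ref{lemma:A_n-alternative-properties}(2);
\item hence $n(ac)=n(a)n(c)=-\gamma_n n(a)n(d)=-\gamma_n n(ad)$ by Lemma~\ref{lemma:alternative-elements-are-normed}.
\end{itemize}
Once you add this, your orthogonal-family argument goes through.

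The paper deduces (4) from item (2) by a route that needs no inner products in $\A_{n+1}$. The six elements $a,b,c,d,ac,ad$ are pairwise orthogonal with nonzero norms, so they are linearly independent in $\A_n$. The twelve vertices have the form $(x,\pm y)$ and $(\pm\gamma_n y,x)$ for $(x,y)\in\{(a,b),(c,d),(ac,ad)\}$. Since $\chrs\mathbb{F}\neq 2$ and $\gamma_n\neq 0$, they span the same space as the twelve independent elements $(x,0)$ and $(0,x)$, $x\in\{a,b,c,d,ac,ad\}$, so they are independent as well.
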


In~\cite[Theorem~4.11]{our_orthographs1} the multiplication table of the vertices of the double hexagon was obtained in the case when $\A_{n+1}$ is a real algebra of the main sequence $\Main_{n+1}$. This result can also be generalized to an arbitrary Cayley--Dickson algebra with anisotropic norm, but the new multiplication table will depend on the parameters $n(a)$, $n(c)$ and~$\gamma_n$. If $\A_{n+1} = \Main_{n+1}$, then $\gamma_n = -1$, and we may assume without loss of generality that $n(a) = n(c) = 1$, so all coefficients in the multiplication table of the vertices of the double hexagon~\cite[p.~677, Table~1]{our_orthographs1} are constant.

\section{Dimensions of annihilators} \label{section:annihilator-dimensions}

In this section we extend the proof of Theorem~9.8 from the paper~\cite{biss} by Biss, Dugger, and Isaksen, which states that the dimension of annihilator of any element in a real algebra of the main sequence is divisible by four, to the case of an arbitrary Cayley--Dickson algebra $\A_n$ with anisotropic norm over a field $\mathbb{F}$, $\chrs \mathbb{F} \neq 2$. But first we show that this statement may fail for Cayley--Dickson algebras with isotropic norm. Recall that, by Lemma~\ref{lemma:two-sided-kernel}, for any $a \in \A_n$ we have $\dim(l.\Ann_\A(a)) = \dim(r.\Ann_\A(a))$.

\begin{lemma}
Let $n \geq 1$, $a \in \A_n$, $t(a) = 0$. Then $\dim(r.\Ann_{\A_n}(a))$ is even.
\end{lemma}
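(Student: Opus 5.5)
Since $t(a)=0$, we have $\bar a=-a$, and Lemma~\ref{lemma:inner-product-movement}(2) gives $\langle ax,y\rangle=\langle x,\bar a y\rangle=-\langle x,ay\rangle$. So $L_a$ is skew-symmetric with respect to the nondegenerate symmetric bilinear form $\langle\cdot,\cdot\rangle$, exactly as in the proof of Lemma~\ref{lemma:zero-self-associator}. By definition $r.\Ann_{\A_n}(a)=\Ker L_a$, so it suffices to show that $\dim\Ker L_a$ is even. Since $\dim \A_n=2^n$ is even for $n\ge 1$, it is enough to show that the rank of $L_a$ is even.

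The plan is to form the bilinear form $B(x,y)=\langle x, L_a y\rangle=\langle x, ay\rangle$ on $\A_n$. Skew-symmetry of $L_a$ gives $B(y,x)=\langle y,ax\rangle=\langle ax,y\rangle=-\langle x,ay\rangle=-B(x,y)$, so $B$ is alternating; since $\chrs\mathbb{F}\ne 2$, antisymmetry implies $B(x,x)=0$. Let $G$ be the Gram matrix of $\langle\cdot,\cdot\rangle$ in some basis and $M$ the matrix of $L_a$. Then the matrix of $B$ is $GM$. Because $G$ is invertible (the norm is nondegenerate by Proposition~\ref{proposition:real-cayley-dickson-properties}), $\operatorname{rank}(GM)=\operatorname{rank}(M)$.

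The rank of an alternating bilinear form over a field of characteristic not $2$ is even, by the standard symplectic basis argument. Hence $\operatorname{rank}L_a$ is even, and therefore $\dim\Ker L_a=2^n-\operatorname{rank}L_a$ is even. Equivalently, $\Ker L_a$ is the radical of $B$, and $\A_n/\Ker L_a$ carries a nondegenerate alternating form, so it has even dimension.

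There is no real obstacle here. The only points requiring care are that $B$ is genuinely alternating, which uses $\chrs\mathbb{F}\neq2$, and that the radical of $B$ coincides with $\Ker L_a$; the latter follows from nondegeneracy of $\langle\cdot,\cdot\rangle$. By Lemma~\ref{lemma:two-sided-kernel}, the same conclusion then holds for the left annihilator.
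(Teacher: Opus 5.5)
Your proposal is correct and is essentially the paper's proof: $L_a$ is skew-symmetric with respect to the nondegenerate form $\langle\cdot,\cdot\rangle$, so its rank is even since $\chrs\mathbb{F}\neq 2$, and then $\dim \A_n = 2^n$ being even gives the result. Your Gram-matrix argument ($GM$ skew-symmetric with $G$ invertible) simply spells out the standard fact that the paper cites in one line.
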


\begin{proof}
Since $t(a) = 0$, by Lemma~\ref{lemma:inner-product-movement}(2) we have $\langle L_a(b), c \rangle = \langle ab, c \rangle = \langle b, \bar{a}c \rangle = - \langle b, L_a(c) \rangle$ for all $b, c \in \A_n$, that is, $L_a$ is a skew-symmetric linear operator with respect to the nondegenerate symmetric bilinear form $\langle \cdot, \cdot \rangle$. Since $\chrs \mathbb{F} \neq 2$, it follows that the rank of $L_a$ is even. But $\dim \A_n = 2^n$ is even, so $\dim(r.\Ann_{\A_n}(a)) = \dim(\Ker L_a)$ is also even.
\end{proof}

\begin{proposition} {\rm \cite[Lemma~4.18]{our_split-algebras}, \cite[Corollary~4.5]{our_split-sedenions}}
Let $\A_n$ be a real low-dimensional Cayley--Dickson split-algebra, i.e., $\mathbb{F} = \mathbb{R}$, $\A_n = \Hyp_n$ and $1 \leq n \leq 4$, and let also $a \in \A_n$.
\begin{enumerate}[(1)]
    \item If $1 \leq n \leq 3$, then $\dim(r.\Ann_{\A_n}(a)) \in \{ 0, 2^{n-1}, 2^n \}$.
    \item If $n = 4$, then $\dim(r.\Ann_{\A_n}(a)) \in \{ 0, 4, 8, 16 \}$.
\end{enumerate}
Hence, for $n \in \{ 3, 4 \}$, $\dim(r.\Ann_{\A_n}(a))$ is divisible by four.
\end{proposition}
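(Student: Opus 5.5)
This proposition is cited from \cite[Lemma~4.18]{our_split-algebras} and \cite[Corollary~4.5]{our_split-sedenions}, so the plan is to reconstruct the arguments of those papers. The final sentence then follows at once: for $n=3$ the possible values are $0,4,8$, and for $n=4$ they are $0,4,8,16$, all divisible by four.

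For item~(1), $n\le 3$, I will use that $\Hyp_n$ is alternative, so $\bar a(ab)=n(a)b$ for all $a,b$. Hence if $n(a)\neq 0$ then $L_a$ is invertible and $r.\Ann(a)=\{0\}$. If $a=0$ the annihilator is all of $\Hyp_n$. It remains to treat $a\ne 0$ with $n(a)=0$. Here $\bar a(ab)=0$ gives $\operatorname{Im} L_a\subseteq \Ker L_{\bar a}$. By Lemma~\ref{lemma:two-sided-kernel} and its proof, $\dim\Ker L_{\bar a}=\dim\Ker L_a$, so $\operatorname{rank}L_a\le \dim\Ker L_a$ and therefore $\dim\Ker L_a\ge 2^{n-1}$. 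For the reverse bound I will use the composition property $n(ab)=n(a)n(b)$ and Witt index considerations. The norm on $\Hyp_n$ is hyperbolic (Proposition~\ref{proposition:A_n-euclidean-product}), so its maximal totally isotropic subspaces have dimension $2^{n-1}$. The annihilator is totally isotropic: if $ab=ab'=0$ then $a(b+b')=0$, and I claim this forces $\langle b,b'\rangle=0$. To justify the claim, I will show that in an alternative algebra $ab=0$ with $a\neq0$ forces $n(b)=0$. Indeed, if $n(b)\ne0$ then $b$ is invertible and $a=(ab)b^{-1}=0$ by alternativity. Thus every element of $r.\Ann(a)$ is isotropic, and polarization shows that $r.\Ann(a)$ is totally isotropic, giving $\dim\le 2^{n-1}$. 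This yields exactly $\{0,2^{n-1},2^n\}$.

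For item~(2), $n=4$, the algebra is no longer alternative, so I will compute with the Cayley--Dickson decomposition $a=(a_1,a_2)$, $a_i\in\Hyp_3$-type components. More precisely, $\Hyp_4=\A_4\{-1,-1,-1,1\}$ has components in the octonions $\mathbb{O}=\Main_3$, which has anisotropic norm. As before, the cases $a=0$ and $t(a)\neq 0$ reduce quickly, using Lemma~\ref{lemma:A_n-anticomm} and the fact that a zero divisor must essentially be pure. The main case is pure $a$. Since every element of $\mathbb{O}$ is alternative, every element of $\Hyp_4$ is doubly alternative, and Lemma~\ref{lemma:double-alternative-annihilators} describes the annihilator explicitly. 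When $n(a_1)\ne 0$, it is parametrized by the $c\in\mathbb{O}$ satisfying the linear condition $b(ca)=\chi(bc)a$. By Lemma~\ref{lemma:condition-asterisk}, we only need $\chi=\pm1$, and here $\chi=n(a_2)/n(a_1)$.

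The key step, and the main obstacle, is computing the dimension of the solution space of $b(ca)=\pm(bc)a$ in $\mathbb{O}$. My plan is to normalize by an automorphism of $\mathbb{O}$ (a $G_2$-action, using that the norms of $a,b$ are equal) so that $a$ and $b$ lie in a standard quaternionic or complex subalgebra, and then solve the linear system explicitly. I expect dimension $8$ when $a$ and $b$ are proportional (or otherwise generate a commutative or associative situation) and dimension $4$ otherwise; the condition amounts to $c$ lying in a certain quaternionic subalgebra or its complement. The case $n(a_1)=0$ with $a\ne0$ is symmetric via the modified characteristic remark. One could instead first reduce to doubly pure elements with $n(a_1)=n(a_2)$, using Proposition~\ref{proposition:new-pairs} together with scaling. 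Combined with the evenness lemma above and the bound $\dim\le 8$ for $a\ne0$ from the same skew-symmetry argument, this gives $\{0,4,8,16\}$.
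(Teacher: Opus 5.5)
The paper gives no proof of this proposition; it is quoted from earlier work. So your argument has to stand on its own.

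\textbf{Item (1) is correct and complete.} The inclusion of the image of $L_a$ in $\Ker L_{\bar a}$, together with the adjointness of $L_a$ and $L_{\bar a}$, gives $\dim\Ker L_a\ge 2^{n-1}$. Your alternativity argument makes $r.\Ann_{\A_n}(a)$ totally isotropic, so its dimension is at most $2^{n-1}$. For this bound nondegeneracy alone suffices, since a totally isotropic $W$ satisfies $W\subseteq W^{\perp}$.

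\textbf{Item (2) has a genuine gap in your dismissal of non-pure $a$.} You claim that ``a zero divisor must essentially be pure''. This is false in $\Hyp_4$, whose norm is isotropic. Take $a=e_0+\til{e}_0=(e_0,e_0)$. Then $t(a)=2$, $n(a)=1-\gamma_3=0$, and $a(x_1,x_2)=(x_1+\bar x_2,\,x_2+\bar x_1)$, so $r.\Ann_{\Hyp_4}(a)=\{(x_1,-\bar x_1)\}$ is $8$-dimensional. None of the tools you cite covers this case:
\begin{itemize}
\item Lemma~\ref{lemma:A_n-anticomm} is about anticentralizers and says nothing about annihilators.
\item Purity of zero divisors, Lemma~\ref{lemma:moreno-zero-divisor-conditions}(3), needs an anisotropic norm.
\item Example~\ref{example:annihilator-dimension-3} shows that for non-pure elements even the parity of the annihilator dimension can fail in general. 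So they cannot be waved away, and the evenness lemma you invoke at the end (valid only for pure $a$) cannot help.
\end{itemize}
The repair is easy: Lemma~\ref{lemma:double-alternative-annihilators} never uses purity. Apply it to every $a=(a_1,a_2)$ with $a_1\neq 0$. Treat $a_1=0\neq a_2$ directly: $(0,a_2)(c,d)=(\bar d a_2,\,a_2\bar c)$ vanishes only for $c=d=0$, since $\mathbb{O}$ is a division algebra.

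\textbf{The decisive computation is only predicted, not carried out.} You are missing the observation that makes it immediate.
\begin{itemize}
\item Since $\gamma_3=1$ and octonion norms are positive, $\chi=n(a_2)/n(a_1)>0$. So Lemma~\ref{lemma:condition-asterisk} forces $\chi=1$, i.e.\ $n(a_1)=n(a_2)$, whenever the annihilator is nonzero. The case $\chi=-1$, and with it your ``complement'' alternative, never occurs.
\item For $\chi=1$ the condition $a_2(ca_1)=(a_2c)a_1$ is exactly $[a_2,c,a_1]=0$. The right-annihilator condition $[a_2,\bar c,\bar a_1]=0$ is the same condition, because associators in $\mathbb{O}$ are alternating and vanish on real arguments.
\item If $\Im(a_1)$ and $\Im(a_2)$ are linearly dependent (equivalently $a_1a_2=a_2a_1$), every $c$ works and the dimension is $8$.
\item Otherwise $a_1,a_2$ generate a quaternion subalgebra $Q$. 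Write $c=q+q'\ell$ with $q,q'\in Q$ and $\ell\perp Q$. The Cayley--Dickson formulas give $[a_2,c,a_1]=\pm\bigl(q'(a_1a_2-a_2a_1)\bigr)\ell$, which vanishes iff $q'=0$. So the dimension is $4$.
\end{itemize}
The map $c\mapsto\bigl(c,-(a_2\bar c)\bar a_1/n(a_1)\bigr)$ is injective, so this yields exactly the values $16$, $0$, $8$, $4$. This parametrization is also where the bound $\dim\le 8$ comes from, not from a skew-symmetry argument. Your $G_2$-normalization would reach the same conclusion, but it is unnecessary.
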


The next two examples show that for $n \geq 4$ the algebra $\A_n$ may contain pure doubly alternative elements such that the dimension of their annihilators is even but not divisible by four.

\begin{example} {\rm \cite[Example~4.17]{our_split-algebras}} \label{example:non-split-algebra-zero-divisors}
Let $n \geq 4$, $\mathbb{F} = \mathbb{R}$, $\mathcal{A}_n = \Hyp_{n-1} \{ 1 \}$. Consider $a = e_1^{(n-2)} \in \Main_{n-2}$. Then, by~\cite[Lemma~4.16]{our_split-algebras}, the element $A = (2a + \til{a}, a) \in \mathcal{A}_n$ is pure and doubly alternative. However, $n(a) = 1$ and $n(2a + \til{a}) = 3$, so $\chi(A) = \gamma_{n-1} \frac{n(a)}{n(2a + \til{a})} = \frac{1}{3} \neq \pm 1$, and thus $A$ does not satisfy condition~\eqref{equation:norm-condition}. Moreover, one can obtain from Theorem~\ref{theorem:octonionic-subalgebra} and Lemma~\ref{lemma:double-alternative-annihilators} that
$$
r.\Ann_{\A_n}(A) = \left\{ (c, -c) \; \Big | \; c = (x,-x), \; x \in \spn(e_0, a)^{\perp} \subseteq \Main_{n-2} \right\}.
$$
Hence $\dim(r.\Ann_{\A_n}(A)) = \dim(\spn(e_0, a)^{\perp}) = 2^{n-2} - 2$ is even but not divisible by four.
\end{example}

\begin{example} \label{example:annihilator-dimension-2}
Let $\mathbb{F} = \mathbb{R}$, $\mathcal{A}_4 = \Hyp_3 \{ 1 \}$. Consider $a = e_2 + e_5 + e_6$, $b = e_0 + e_1 + e_5 \in \Hyp_3$. Since $\Hyp_3$ is an alternative algebra, the element $(a, b) \in \A_4$ is pure and doubly alternative. Moreover, $n(b) = -n(a) = 1$, so $\chi((a,b)) = \gamma_{n-1} \frac{n(b)}{n(a)} = -1$, that is, $(a,b)$ satisfies condition~\eqref{equation:norm-condition}. However,
\begin{align*}
r.\Ann_{\A_4}((a,b)) = \spn\big(&(-e_2 + e_3 - e_6 - e_7, e_2 + e_3 + e_6 - e_7),\\
&(e_1 + e_2 + 2e_3 - 2e_4 + e_5 - e_7, e_1 - 3e_3 + 2e_4 + e_5 - e_6 + 2e_7)\big).
\end{align*}
Hence $\dim(r.\Ann_{\A_4}((a,b))) = 2$ is even but not divisible by four.
\end{example}

The following example shows that if an element of $\A_n$ is not pure, then for a sufficiently large $n$ the dimension of its annihilator may be odd.

\begin{example} \label{example:annihilator-dimension-3}
Let $\mathbb{F} = \mathbb{R}$, $\mathcal{A}_5 = \Hyp_5 = \Main_4 \{ -1 \}$. Consider 
\begin{align*}
    a &= e_0 - e_1 + 2 e_2 + 2 e_5 - e_6 + 2 e_7 + e_9 - e_{10} - e_{11} - 2e_{12} - 2e_{13} - 2e_{14} - 2e_{15},\\
    b &= -e_0 - 2e_1 + e_2 - 2e_3 - 2e_4 - 2e_6 + 2e_7 + e_9 - 2e_{10} + 2e_{11} + e_{12} + e_{13} - e_{15} \in \Main_4.
\end{align*}
One can verify that in this case $\dim(r.\Ann_{\Hyp_5}((a,b))) = 3$ is odd.
\end{example}

\bigskip

Let now $\A_n$ be a Cayley--Dickson algebra with anisotropic norm over a field~$\mathbb{F}$, $\chrs \mathbb{F} \neq 2$. Recall that, by Lemma~\ref{lemma:moreno-zero-divisor-conditions}(2), in this case we have $l.\Ann_{\A_n}(a) = r.\Ann_{\A_n}(a) = O_{\A_n}(a)$ for all $a \in \A_n$.

\begin{lemma} \label{lemma:new-field}
Let $n \geq 1$. We denote $\mathbb{K} = \spn(e_0, \til{e}_0)$. Then
\begin{enumerate}[{\rm (1)}]
    \item $\mathbb{K}$ is a field;
    \item $\A_n$ is a left vector space over $\mathbb{K}$.
\end{enumerate}
\end{lemma}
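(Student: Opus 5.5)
We will prove both parts directly from the Cayley--Dickson multiplication and from the earlier results on $\til{e}_0$. Throughout we use the standing assumption of this part of Section~\ref{section:annihilator-dimensions} that the norm on $\A_n$ is anisotropic.

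For item~(1), note that $\mathbb{K} = \spn(e_0,\til{e}_0)$ is closed under multiplication. Indeed, $(\til{e}_0)^2 = -n(\til{e}_0) = \gamma_{n-1}$ by purity of $\til{e}_0$ (Proposition~\ref{proposition:real-cayley-dickson-properties}), and $e_0$ is the unit. Hence $\mathbb{K}$ is a commutative associative $2$-dimensional $\mathbb{F}$-algebra, isomorphic to $\mathbb{F}[t]/(t^2-\gamma_{n-1})$ via $t \mapsto \til{e}_0$. This quotient is a field if and only if $\gamma_{n-1}$ is not a square in $\mathbb{F}$. By Proposition~\ref{proposition:real-cayley-dickson-properties}, $n(\alpha e_0 + \beta\til{e}_0) = \alpha^2 - \gamma_{n-1}\beta^2$. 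If $\gamma_{n-1} = \lambda^2$, then $n(\lambda e_0 + \til{e}_0) = 0$, which contradicts anisotropy. So $\gamma_{n-1}$ is a nonsquare and $\mathbb{K}$ is a field. Equivalently, every nonzero $k \in \mathbb{K}$ has inverse $\bar{k}/n(k)$ inside $\mathbb{K}$.

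For item~(2), the scalar multiplication will be $k \cdot x = kx$, the product in $\A_n$, for $k \in \mathbb{K}$ and $x \in \A_n$. Additivity in both arguments and $e_0 \cdot x = x$ are immediate from bilinearity. The only nontrivial axiom is $(k_1k_2)x = k_1(k_2x)$, i.e.\ $[k_1,k_2,x]=0$ for $k_1,k_2 \in \mathbb{K}$. By linearity of the associator, it suffices to check basis elements. Any associator containing $e_0$ vanishes, so the only remaining case is $[\til{e}_0,\til{e}_0,x]=0$, which is exactly the strong alternativity of $\til{e}_0$ from Corollary~\ref{corollary:e_0-strongly-alternative}.

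Neither step is a serious obstacle. The one point needing care is item~(1): that is where anisotropy is essential, since without it $\mathbb{K}$ would be split, $\mathbb{F}\times\mathbb{F}$. If one prefers explicit formulas, the module axiom can also be checked directly from $\til{e}_0 \cdot (x_1,x_2) = (\gamma_{n-1}\bar{x}_2, \bar{x}_1)$. Applying this twice gives $\gamma_{n-1}(x_1,x_2)$, which confirms $\til{e}_0(\til{e}_0 x) = (\til{e}_0\til{e}_0)x$.
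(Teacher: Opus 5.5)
Your proof is correct and follows essentially the same route as the paper: $\mathbb{K}$ is closed under multiplication because $(\til{e}_0)^2=\gamma_{n-1}$, it is a field by anisotropy, and the module axiom reduces to $[\til{e}_0,\til{e}_0,x]=0$. The differences are cosmetic: you phrase (1) through the nonsquare criterion for $\mathbb{F}[t]/(t^2-\gamma_{n-1})$ rather than through the absence of zero divisors, and for (2) you invoke Corollary~\ref{corollary:e_0-strongly-alternative} or the explicit formula, where the paper cites Schafer's lemma on alternativity of standard basis elements.
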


\begin{proof}
\leavevmode
\begin{enumerate}[{\rm (1)}]
    \item Since $(\til{e}_0)^2 = \gamma_{n-1}$, the set $\mathbb{K}$ is closed under addition and multiplication, that is, it forms a subalgebra in $\A_n$. Moreover, multiplication on $\mathbb{K}$ is commutative and associative. Since the norm on $\A_n$ is anisotropic, its restriction to $\mathbb{K}$ is also anisotropic, and thus $\mathbb{K}$ does not contain zero divisors. Therefore, $\mathbb{K}$ is a field.
    \item It is sufficient to show that $[k_1, k_2, a] = 0$ for all $k_1, k_2 \in \mathbb{K}$ and $a \in \A_n$. This is equivalent to the equality $[\til{e}_0, \til{e}_0, a] = 0$ for all $a \in \A_n$, that is, to alternativity of the element $\til{e}_0$. But $\til{e}_0 = (0, e_0)$ is one of the standard basis elements, and thus it is alternative by~\cite[Lemma~4]{schafer}. \qedhere
\end{enumerate}
\end{proof}

By Lemma~\ref{lemma:inner-product-movement}(2), $\langle a, b \rangle = \langle a\bar{b}, e_0 \rangle$, that is, $\langle a, b \rangle$ is an orthogonal projection of $a\bar{b}$ onto $\mathbb{F}$ with respect to $\langle \cdot, \cdot \rangle$. We can use this observation to define a $\mathbb{K}$-valued Hermitian inner product on $\A_n$.

\begin{notation}
Given $a, b \in \A_n$, we denote by $\langle a, b \rangle_{\mathbb{K}}$ the orthogonal projection of $a\bar{b}$ onto $\mathbb{K}$ with respect to $\langle \cdot, \cdot \rangle$.
\end{notation}

\begin{lemma} \label{lemma:new-inner-product}
\leavevmode
\begin{enumerate}[{\rm (1)}]
    \item For any $a, b \in \A_n$ it holds that $\langle a, b \rangle_{\mathbb{K}} = \langle a, b \rangle + \frac{1}{\gamma_{n-1}} \langle \til{e}_0 a, b \rangle \til{e}_0$.
    \item $\langle a, b \rangle_{\mathbb{K}}$ is a Hermitian inner product, that is, an anisotropic (in particular, nondegenerate) Hermitian sesquilinear form.
\end{enumerate}
\end{lemma}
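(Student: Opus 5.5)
For item (1), I will compute the orthogonal projection of $a\bar{b}$ onto $\mathbb{K}=\spn(e_0,\til{e}_0)$ directly. The basis $e_0,\til{e}_0$ is orthogonal, since $\til{e}_0$ is pure. We have $n(e_0)=1$ and $n(\til{e}_0)=-\gamma_{n-1}\neq 0$, so the projection of $x=a\bar{b}$ equals
$$\langle x,e_0\rangle e_0+\frac{\langle x,\til{e}_0\rangle}{-\gamma_{n-1}}\til{e}_0 .$$
By Lemma~\ref{lemma:inner-product-movement}(2), the first coefficient is $\langle a\bar{b},e_0\rangle=\langle a,b\rangle$. For the second, the same lemma gives
$$\langle a\bar{b},\til{e}_0\rangle=\langle a,\til{e}_0 b\rangle=\langle \overline{\til{e}_0}a,b\rangle=-\langle\til{e}_0a,b\rangle .$$
Hence the second coefficient is $\frac{1}{\gamma_{n-1}}\langle\til{e}_0a,b\rangle$, which is the claimed formula.

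For item (2), I will read off each property from the formula in (1).

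Additivity in each argument is clear.

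For $\mathbb{K}$-linearity in the first argument, it suffices to check $\langle \til{e}_0a,b\rangle_{\mathbb{K}}=\til{e}_0\langle a,b\rangle_{\mathbb{K}}$. By (1), the left side equals $\langle\til{e}_0a,b\rangle+\frac{1}{\gamma_{n-1}}\langle\til{e}_0(\til{e}_0a),b\rangle\til{e}_0$. Alternativity of $\til{e}_0$ (Corollary~\ref{corollary:e_0-strongly-alternative}) gives $\til{e}_0(\til{e}_0a)=\gamma_{n-1}a$, so the left side becomes $\langle\til{e}_0a,b\rangle+\langle a,b\rangle\til{e}_0$. On the right side, $(\til{e}_0)^2=\gamma_{n-1}$ gives $\til{e}_0\langle a,b\rangle_{\mathbb{K}}=\langle a,b\rangle\til{e}_0+\langle\til{e}_0a,b\rangle$. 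The two sides agree.

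For Hermitian symmetry, I need $\langle b,a\rangle_{\mathbb{K}}=\overline{\langle a,b\rangle_{\mathbb{K}}}$, where the involution restricted to $\mathbb{K}$ is its nontrivial automorphism $\til{e}_0\mapsto-\til{e}_0$. This reduces to $\langle\til{e}_0b,a\rangle=-\langle\til{e}_0a,b\rangle$, i.e.\ to skew-symmetry of $L_{\til{e}_0}$. That holds by Lemma~\ref{lemma:inner-product-movement}(2), since $\til{e}_0$ is pure. Alternatively, note directly that $\overline{a\bar{b}}=b\bar{a}$ and that projection onto $\mathbb{K}$ commutes with the involution, because the involution preserves $\langle\cdot,\cdot\rangle$ and $\mathbb{K}$. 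Conjugate-linearity in the second argument then follows from the first two properties.

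For anisotropy, the $e_0$-component of $\langle a,a\rangle_{\mathbb{K}}$ is $n(a)$, which is nonzero for $a\neq0$ because the norm on $\A_n$ is anisotropic. Thus $\langle a,a\rangle_{\mathbb{K}}\neq0$, and nondegeneracy follows.

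The main point needing care is the $\mathbb{K}$-linearity step. The left vector space structure depends on the identity $\til{e}_0(\til{e}_0a)=(\til{e}_0\til{e}_0)a$, which Lemma~\ref{lemma:new-field}(2) already provides. I also have to track signs carefully when moving $\til{e}_0$ across the bilinear form.
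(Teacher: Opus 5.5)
Your proposal is correct and follows essentially the same route as the paper. Item (1) is the projection of $a\bar{b}$ onto the orthogonal basis $e_0,\til{e}_0$ using Lemma~\ref{lemma:inner-product-movement}(2). Item (2) checks $\mathbb{K}$-linearity on $k=\til{e}_0$ via $\til{e}_0(\til{e}_0a)=\gamma_{n-1}a$, Hermitian symmetry via $\overline{a\bar{b}}=b\bar{a}$, and anisotropy via $\langle a,a\rangle=n(a)$. Your remark that projection onto $\mathbb{K}$ commutes with the involution (equivalently, skew-symmetry of $L_{\til{e}_0}$) makes explicit a step the paper leaves implicit.
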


\begin{proof}
\leavevmode
\begin{enumerate}[{\rm (1)}]
    \item By Lemma~\ref{lemma:inner-product-movement}(2), $\langle a\bar{b}, e_0 \rangle = \langle a, b \rangle$ and $\langle a\bar{b}, \til{e}_0 \rangle = \langle a, \til{e}_0 b \rangle = -\langle \til{e}_0 a, b \rangle$. Since $e_0 \perp \til{e}_0$, $n(e_0) = 1$ and $n(\til{e}_0) = -\gamma_{n-1}$, we have
    $$
    \langle a, b \rangle_{\mathbb{K}} = \frac{\langle a\bar{b}, e_0 \rangle}{n(e_0)} e_0 + \frac{\langle a\bar{b}, \til{e}_0 \rangle}{n(\til{e}_0)} \til{e}_0 = \langle a, b \rangle + \frac{\langle \til{e}_0 a, b \rangle}{\gamma_{n-1}} \til{e}_0.
    $$
    \item Additivity in both arguments is obvious. We now show that $\langle ka, b \rangle_{\mathbb{K}} = k \langle a, b \rangle_{\mathbb{K}}$ for all $k \in \mathbb{K}$ and $a, b \in \A_n$. By $\mathbb{F}$-linearity, it is sufficient to prove this statement for $k = e_0$ and $k = \til{e}_0$ only. The first case is evident, while in the second case we obtain
    \begin{align*}
        \langle \til{e}_0 a, b \rangle_{\mathbb{K}} &= \langle \til{e}_0 a, b \rangle + \frac{\langle \til{e}_0 (\til{e}_0 a), b \rangle}{\gamma_{n-1}} \til{e}_0 = \langle \til{e}_0 a, b \rangle + \frac{\langle (\til{e}_0)^2 a, b \rangle}{\gamma_{n-1}} \til{e}_0 =\\
        &= \langle \til{e}_0 a, b \rangle + \langle a, b \rangle \til{e}_0 = \langle a, b \rangle \til{e}_0 + \frac{\langle \til{e}_0 a, b \rangle}{\gamma_{n-1}} (\til{e}_0)^2 = \til{e}_0 \langle a, b \rangle_{\mathbb{K}}.
    \end{align*}
    Moreover, it follows from $a\bar{b} = \overline{b\bar{a}}$ that $\langle a, b \rangle_{\mathbb{K}} = \overline{\langle b, a \rangle_{\mathbb{K}}}$. Since $a\bar{a} = n(a) \in \mathbb{F} \subseteq \mathbb{K}$, we have $\langle a, a \rangle_{\mathbb{K}} = n(a)$ for $a \in \A_n$, and thus $\langle \cdot, \cdot \rangle_{\mathbb{K}}$ is anisotropic. \qedhere
\end{enumerate}
\end{proof}

\begin{lemma} \label{lemma:conjugate-linear-skew-hermitian}
Let $a \in \A_n$ be doubly pure. Then $L_a$ is a conjugate-linear skew-Hermitian mapping in the sense that
\begin{enumerate}[{\rm (1)}]
    \item $L_a(b + c) = L_a(b) + L_a(c)$,
    \item $L_a(kb) = \bar{k} L_a(b)$,
    \item $\langle L_a(b), c \rangle_{\mathbb{K}} = -\overline{\langle b, L_a(c) \rangle_{\mathbb{K}}}$
\end{enumerate}
for all $k \in \mathbb{K}$, $b, c \in \A_n$.
\end{lemma}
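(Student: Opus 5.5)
Item (1) is just distributivity of the algebra multiplication, so nothing needs to be done there. For item (2), since $L_a$ is $\mathbb{F}$-linear and $\mathbb{K}=\spn(e_0,\til{e}_0)$ with $\overline{\til{e}_0}=-\til{e}_0$, it suffices to check the case $k=\til{e}_0$, i.e.\ to prove $a(\til{e}_0 b) = -\til{e}_0(ab)$ for every $b\in\A_n$. The plan is to decompose $b$ along $\mathbb{K}\oplus\mathbb{K}^{\perp}$. For the part of $b$ lying in $\mathbb{K}$ the identity is a direct check inside $\mathbb{H}_a$ (Lemma~\ref{lemma:quaternionic-subalgebra}), because $a$ and $\til{e}_0$ anticommute. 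Writing $\til{e}_0 b$ for doubly pure $b$ via the involution, note that $\til{e}_0 b = -\overline{b}\,\til{e}_0$ up to sign conventions. By Lemma~\ref{lemma:A_n-anticomm}(3), $\til{e}_0 b = -b\til{e}_0 = -\til{b}$ when $b\perp e_0,\til{e}_0$. Then Lemma~\ref{lemma:tilde-properties}(2), applied with the doubly pure element in the right slot, gives the needed relations.

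Concretely, for $b$ doubly pure I would compute $a(\til{e}_0 b) = -a\til{b}$. I would express this through Corollary~\ref{corollary:alternative-subalgebras}-type formulas, or more cleanly via $\til{e}_0$-flexibility. Since $\til{b}a = -\til{ba}$ by Lemma~\ref{lemma:tilde-properties}(2) and $a$ is doubly pure, conjugating gives $\overline{a}\,\overline{\til{b}} = -\overline{\til{ba}}$. Because $a,\til{b}$ are pure and $\til{ba}$ has zero trace part handled via Lemma~\ref{lemma:tilde-properties}(3), this turns into $a\til{b} = \til{ab}$ up to the trace correction coming from $\langle a,b\rangle$. Comparing this with $-\til{e}_0(ab) = (ab)\til{e}_0 - t(ab)\til{e}_0 + \ldots$ should produce exactly the identity. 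I expect the bookkeeping of trace terms (the components of $ab$ in $\mathbb{K}$) to be the main obstacle. The cleanest route may be to work componentwise: write $a=(a_1,a_2)$, $b=(b_1,b_2)$ with $t(a_1)=t(a_2)=0$, use $\til{e}_0 b=(-\gamma_{n-1}\bar b_2, \bar b_1)$ (this formula holds for all $b$ in $\A_n=\A_{n-1}\{\gamma_{n-1}\}$), and verify $a(\til{e}_0 b) = -\til{e}_0(ab)$ directly from the Cayley--Dickson product using $\bar a_i=-a_i$. This is a short, mechanical computation that avoids any case split, and I would fall back on it.

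For item (3), I would use Lemma~\ref{lemma:new-inner-product}(1): $\langle x,y\rangle_{\mathbb{K}} = \langle x,y\rangle + \gamma_{n-1}^{-1}\langle \til{e}_0x,y\rangle\til{e}_0$. The $\mathbb{F}$-part is handled by Lemma~\ref{lemma:inner-product-movement}(2) with $\bar a=-a$, which gives $\langle ab,c\rangle = -\langle b,ac\rangle$, matching the real part of $-\overline{\langle b,ac\rangle_{\mathbb{K}}}$. For the $\til{e}_0$-part I need $\langle \til{e}_0(ab),c\rangle = \langle \til{e}_0 b, ac\rangle$, because conjugation flips the sign of the $\til{e}_0$-coefficient. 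By item (2), $\til{e}_0(ab) = -a(\til{e}_0b)$. Applying Lemma~\ref{lemma:inner-product-movement}(2) then gives $\langle -a(\til{e}_0 b),c\rangle = \langle \til{e}_0 b, ac\rangle$, as required.
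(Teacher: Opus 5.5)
Item (3) is the paper's argument: the paper invokes item (2) for $c$ and you invoke it for $b$, which makes no difference. Your reduction of (2) to the single identity $a(\til{e}_0 b) = -\til{e}_0(ab)$ also matches the paper.

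The paper then finishes (2) in one line. Applying the involution and using $\bar{a} = -a$ and $\overline{\til{e}_0} = -\til{e}_0$, the identity becomes $(\bar{b}\til{e}_0)a = -(\bar{b}a)\til{e}_0$, i.e.\ $\til{\bar{b}}\,a = -\til{\bar{b}a}$. This is Lemma~\ref{lemma:tilde-properties}(2) with the \emph{arbitrary} element $\bar{b}$ on the left and the doubly pure $a$ on the right. That lemma puts no condition on the left factor, so no splitting of $b$ along $\mathbb{K}\oplus\mathbb{K}^{\perp}$ is needed. Your second paragraph was one step away from this.

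As written, however, neither of your routes for (2) closes, though both are easily repaired.

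\textbf{The componentwise route.} The formula you plan to fall back on has the wrong sign. From $(x,y)(z,w) = (xz + \gamma_{n-1}\bar{w}y,\ wx + y\bar{z})$ one gets $\til{e}_0 b = (0,e_0)(b_1,b_2) = (\gamma_{n-1}\bar{b}_2,\ \bar{b}_1)$. As a sanity check, $b = \til{e}_0$ must give $(\til{e}_0)^2 = \gamma_{n-1}$, while your formula gives $-\gamma_{n-1}$. With your sign, the terms $\gamma_{n-1} b_1 a_2$ and $\gamma_{n-1} a_2 b_2$ come out with opposite signs on the two sides, so the ``mechanical check'' fails. With the correct sign, both $a(\til{e}_0 b)$ and $-\til{e}_0(ab)$ equal $\bigl(\gamma_{n-1}(a_1\bar{b}_2 + b_1 a_2),\ \bar{b}_1 a_1 + \gamma_{n-1} a_2 b_2\bigr)$, using only $\bar{a}_i = -a_i$. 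That gives a valid, self-contained proof; in effect it re-derives the conjugated form of Lemma~\ref{lemma:tilde-properties}(2) rather than quoting it.

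\textbf{The first route.} It contains a false claim. For doubly pure $b$ one has $-\bar{b}\til{e}_0 = b\til{e}_0 = \til{b}$, whereas $\til{e}_0 b = -\til{b}$, so ``$\til{e}_0 b = -\bar{b}\til{e}_0$'' is off by a sign.

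The feared trace correction also does not exist. Since $\til{b}$ is pure, conjugating $\til{b}a = -\til{ba}$ gives exactly $a\til{b} = -\overline{\til{ba}} = -\overline{\til{e}_0}\;\overline{ba} = \til{e}_0(ab)$. Hence $a(\til{e}_0 b) = -a\til{b} = -\til{e}_0(ab)$ with no correction terms. A correction appears only if you try to rewrite $\til{e}_0(ab)$ as $-(ab)\til{e}_0$. That rewriting is illegitimate because $ab$ need not lie in $\mathbb{K}^{\perp}$.

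Once these points are fixed, your decomposition argument also works, combined with the check on $\mathbb{K}$ inside $\mathbb{H}_a$.
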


\begin{proof}
\leavevmode
\begin{enumerate}[{\rm (1)}]
    \item Additivity of $L_a$ is obvious.
    \item Due to $\mathbb{F}$-linearity, it is sufficient to consider the cases $k = e_0$ and $k = \til{e}_0$. The first case is evident, while in the second case we need to show that $a(\til{e}_0b) = -\til{e}_0(ab)$. Applying the involution to both sides of the required equality and using the fact that $a$ is pure, we conclude that it is equivalent to $(\bar{b}\til{e}_0)a = -(\bar{b}a)\til{e}_0$. Since $a$ is doubly pure, we obtain by Lemma~\ref{lemma:tilde-properties}(2) that $\til{\bar{b}} a = -\til{\bar{b}a}$, as required.
    \item We use Lemmas~\ref{lemma:inner-product-movement}(2) and~\ref{lemma:new-inner-product}(1) and item~(2) to obtain that
    \begin{align*}
        \langle L_a(b), c \rangle_{\mathbb{K}} &= \langle ab, c \rangle_{\mathbb{K}} = \langle ab, c \rangle + \frac{\langle \til{e}_0 (ab), c \rangle}{\gamma_{n-1}} \til{e}_0 = \langle b, \bar{a}c \rangle + \frac{\langle b, \bar{a}(\overline{\til{e}_0} c) \rangle}{\gamma_{n-1}} \til{e}_0 =\\
        &=-\langle b, ac \rangle + \frac{\langle b, a(\til{e}_0 c) \rangle}{\gamma_{n-1}} \til{e}_0 = -\langle b, ac \rangle + \frac{\langle b, -\til{e}_0 (ac) \rangle}{\gamma_{n-1}} \til{e}_0 =\\
        &= -\langle b, ac \rangle + \frac{\langle \til{e}_0 b, (ac) \rangle}{\gamma_{n-1}} \til{e}_0 = -\overline{\langle b, ac \rangle_{\mathbb{K}}} = -\overline{\langle b, L_a(c) \rangle_{\mathbb{K}}}. \tag*{\qedhere}
    \end{align*}
\end{enumerate}
\end{proof}

In~\cite{biss} the following lemma is formulated for the field of complex numbers only, however, it holds also for the case of an arbitrary field $\mathbb{K}$, $\chrs \mathbb{K} \neq 2$, with an involution $a \mapsto \bar{a}$.

\begin{lemma} {\em \cite[Lemma~6.7]{biss}} \label{lemma:kernel-codimension}
Let $V$ be a finite-dimensional linear space over $\mathbb{K}$ with a nondegenerate Hermitian form, and let also $L$ be a conjugate-linear skew-Hermitian endomorphism of $V$. Then the $\mathbb{K}$-codimension of $\Ker L$ in~$V$ is even.
\end{lemma}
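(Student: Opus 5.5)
We plan to reduce the statement to an ordinary skew-symmetric (or alternating) form over $\mathbb{K}$, whose rank is even. Since $\Ker L$ is a $\mathbb{K}$-subspace (by conjugate-linearity, $L(kv)=\bar{k}L(v)=0$ whenever $L(v)=0$), it is enough to show that $\operatorname{rank}_{\mathbb{K}} \Im L$ is even, because $\dim_{\mathbb{K}} V - \dim_{\mathbb{K}} \Ker L = \dim_{\mathbb{K}} L(V)$.

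Let $h$ denote the Hermitian form, linear in the first argument and conjugate-linear in the second, as for $\langle\cdot,\cdot\rangle_{\mathbb{K}}$. We define
$$
\omega(u,v) = h(L(u), v).
$$
\begin{itemize}
\item $\omega$ is conjugate-linear in $u$, since $L$ is conjugate-linear and $h$ is linear in the first slot.
\item $\omega$ is conjugate-linear in $v$ as well.
\end{itemize}
Hence $\beta(u,v) = \overline{\omega(u,v)}$ is $\mathbb{K}$-bilinear. By skew-Hermitianity, $h(L(u),v) = -\overline{h(u,L(v))} = -h(L(v),u)$, using $\overline{h(x,y)} = h(y,x)$. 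So $\omega(u,v) = -\omega(v,u)$, and $\beta$ is a skew-symmetric $\mathbb{K}$-bilinear form. Since $\chrs \mathbb{K} \neq 2$, $\beta$ is alternating, and hence has even rank; this is the standard symplectic normal form over any field.

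It remains to identify the rank of $\beta$ with $\dim_{\mathbb{K}} L(V)$. The radical of $\beta$ in the first argument is $\{u : h(L(u),v) = 0 \ \forall v\}$. Since $h$ is nondegenerate, this radical equals $\Ker L$. Therefore
$$
\operatorname{rank}\beta = \dim_{\mathbb{K}} V - \dim_{\mathbb{K}}\Ker L,
$$
which is thus even.

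The main obstacle is bookkeeping with conjugations. We need to check carefully that conjugating $\omega$ yields a genuinely bilinear form rather than a sesquilinear one; equivalently, that $\beta(ku,v) = k\beta(u,v)$ and $\beta(u,kv) = k\beta(u,v)$. We must also handle the trivial-involution case, where the same argument works verbatim.
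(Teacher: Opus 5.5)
Your proof is correct and is essentially the paper's argument in coordinate-free form. The paper writes $Lx=\overline{Ax}$ and shows that the Gram matrix $HA$ of the bilinear form $(x,y)\mapsto\langle x,Ly\rangle_{\mathbb{K}}$ is skew-symmetric, whereas your $\beta(u,v)=\overline{h(L(u),v)}=h(v,L(u))$ is exactly the transpose of that same form. In both versions, $\chrs\mathbb{K}\neq 2$ gives even rank, and nondegeneracy of the Hermitian form identifies that rank with $\codim_{\mathbb{K}}\Ker L$.
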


\begin{proof}
We identify $V$ with $\mathbb{K}^m$, where $m = \dim V$. Since $L$ is conjugate-linear, there exists a matrix $A \in M_m(\mathbb{K})$ such that $Lx = \overline{Ax}$ for all $x \in V$. Let $H \in M_m(\mathbb{K})$ be the matrix of the Hermitian form on $V$, that is, $\langle x, y \rangle_{\mathbb{K}} = x^t H \bar{y}$ for all $x, y \in V$. Then we have $\bar{H} = H^t$. The mapping $L$ is skew-Hermitian, hence we obtain
\begin{align*}
    x^t H Ay &= \langle x, \overline{Ay} \rangle_{\mathbb{K}} = \langle x, Ly \rangle_{\mathbb{K}} = -\overline{\langle Lx, y \rangle_{\mathbb{K}}} = -\overline{\langle \overline{Ax}, y \rangle_{\mathbb{K}}} =\\
    &= -\overline{(\overline{Ax})^t H \bar{y}} = -(Ax)^t \bar{H} y = -x^t A^t H^t y = -x^t (HA)^t y 
\end{align*}
for all $x, y \in V$, and thus $HA = -(HA)^t$. Since $\chrs \mathbb{K} \neq 2$, it follows that the rank of $HA$ is even, so nondegeneracy of $H$ implies that the rank of $A$ is also even. Therefore, the $\mathbb{K}$-codimension of $\Ker L = \Ker A$ in~$V$ is even.
\end{proof}

\begin{theorem} \label{theorem:anisotropic-annihilator-dimension}
Let $n \geq 2$, $a \in \A_n$. Then $\dim_{\mathbb{F}}(r.\Ann_{\A_n}(a))$ is divisible by four.
\end{theorem}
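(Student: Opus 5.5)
The plan is to reduce the statement to Lemma~\ref{lemma:kernel-codimension}, applied over the field $\mathbb{K} = \spn(e_0, \til{e}_0)$ from Lemma~\ref{lemma:new-field}. Throughout, $\A_n$ has anisotropic norm, as in the standing assumption of this part of the section.

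\textbf{Reduction to doubly pure zero divisors.} If $a$ is not a zero divisor, then $r.\Ann_{\A_n}(a) = \{0\}$, whose dimension $0$ is divisible by four. This covers $a = 0$ as well, since then the annihilator is all of $\A_n$, of dimension $2^n$, and $2^n$ is divisible by four because $n \geq 2$. So we may assume that $a \neq 0$ and $a \in Z(\A_n)$. By Lemma~\ref{lemma:moreno-doubly-pure}(4), $a$ is then doubly pure.

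\textbf{Setting up the $\mathbb{K}$-structure.} By Lemma~\ref{lemma:new-field}, $\A_n$ is a left vector space over the field $\mathbb{K}$, which has $\dim_{\mathbb{F}}\mathbb{K} = 2$. Hence $\dim_{\mathbb{K}}\A_n = 2^{n-1}$, which is even because $n \geq 2$. The field $\mathbb{K}$ carries the involution $k \mapsto \bar{k}$ restricted from $\A_n$, and $\chrs \mathbb{K} = \chrs \mathbb{F} \neq 2$. By Lemma~\ref{lemma:new-inner-product}, $\langle \cdot,\cdot\rangle_{\mathbb{K}}$ is a nondegenerate Hermitian form on $\A_n$. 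By Lemma~\ref{lemma:conjugate-linear-skew-hermitian}, $L_a$ is a conjugate-linear skew-Hermitian endomorphism for this form.

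\textbf{Applying the codimension lemma.} Lemma~\ref{lemma:kernel-codimension} now says that $\Ker L_a = r.\Ann_{\A_n}(a)$ has even $\mathbb{K}$-codimension in $\A_n$. Although $L_a$ is only conjugate-linear, its kernel is a $\mathbb{K}$-subspace, since $L_a(kb) = \bar{k}L_a(b)$. Combining the even codimension with $\dim_{\mathbb{K}}\A_n = 2^{n-1}$ being even, we get that $\dim_{\mathbb{K}} \Ker L_a$ is even. Multiplying by $\dim_{\mathbb{F}}\mathbb{K} = 2$ then gives that $\dim_{\mathbb{F}}(r.\Ann_{\A_n}(a))$ is divisible by four.

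\textbf{Main obstacle.} Most of the work has already been done in the preceding lemmas, so the only points needing care are small. One is confirming that $\Ker L_a$ is a $\mathbb{K}$-subspace, which follows directly from conjugate-linearity as noted above. The other is that the doubly pure hypothesis of Lemma~\ref{lemma:conjugate-linear-skew-hermitian} holds, which is supplied by Lemma~\ref{lemma:moreno-doubly-pure}(4) in the anisotropic case.
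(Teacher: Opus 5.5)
Your proposal is correct and follows the paper's proof essentially step for step: you dispose of the trivial cases, use Lemma~\ref{lemma:moreno-doubly-pure}(4) to see that a zero divisor $a$ is doubly pure, view $L_a$ as a conjugate-linear skew-Hermitian map over $\mathbb{K}$, and combine Lemma~\ref{lemma:kernel-codimension} with the evenness of $\dim_{\mathbb{K}}\A_n = 2^{n-1}$. One small wording slip: the case $a=0$ is not ``covered'' by the non-zero-divisor case, though the separate argument you give for it ($\dim_{\mathbb{F}} \A_n = 2^n$ with $n \geq 2$) is correct.
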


\begin{proof}
If $a = 0$, then $\dim_{\mathbb{F}}(r.\Ann_{\A_n}(a)) = \dim_{\mathbb{F}}(\A_n) = 2^n$ is divisible by four, and if $a \neq 0$ and $a \notin Z(\A_n)$, then $\dim_{\mathbb{F}}(r.\Ann_{\A_n}(a)) = 0$ is also divisible by four. We now consider the case when $a \in Z(\A_n)$. By Lemma~\ref{lemma:moreno-doubly-pure}(4), the element $a$ is doubly pure. Hence, by Lemma~\ref{lemma:conjugate-linear-skew-hermitian}, $L_a$ is a conjugate-linear skew-Hermitian mapping. It follows from Lemma~\ref{lemma:kernel-codimension} that $\codim_{\mathbb{K}} \Ker L_a$ is even. Since $\dim_{\mathbb{K}} \A_n = 2^{n-1}$ is even, this means that $\dim_{\mathbb{K}} \Ker L_a$ is also even. Hence $\dim_{\mathbb{F}}(r.\Ann_{\A_n}(a)) = \dim_{\mathbb{F}} \Ker L_a = 2 \dim_{\mathbb{K}} \Ker L_a$ is divisible by four.
\end{proof}

Some other results on the dimension of annihilators, which were obtained in the papers~\cite{biss,biss2} for real algebras of the main sequence, can also be generalized to the case of arbitrary Cayley--Dickson algebras with anisotropic norm. In particular, this is true for Lemma~8.4, Proposition~8.11 and Theorem~13.2 from~\cite{biss}. We remark that, in case of arbitrary Cayley--Dickson algebras with anisotropic norm, Definition~3.1 of the element $\{ a, b \} \in \A_{n+1}$ from~\cite{biss2} should be modified as follows: the factor $\frac{1}{\sqrt{2}}$ should be deleted, and a new factor~$\gamma_n$ should appear in the first component of $\{ a, b \}$.

\medskip

The author is grateful to her scientific advisor Professor Alexander E. Guterman for posing the
problem and fruitful discussions.

\end{document}